\theoremstyle{plain}
\newtheorem{thm}{Theorem}[section]
\newtheorem{theorem}[thm]{Theorem} 
\newtheorem{lemma}[thm]{Lemma} 
\newtheorem{proposition}[thm]{Proposition}
\newtheorem{corollary}[thm]{Corollary}
\theoremstyle{definition}
\newtheorem{remark}[thm]{Remark}
\newtheorem{conjecture}[thm]{Conjecture}
\newtheorem{definition}[thm]{Definition}
\numberwithin{equation}{section}
\newcommand{\ignore}[1]{}
\DeclareMathOperator{\diag}{diag}
\newcommand{\bz}{\mathbf{z}}
\newcommand{\bw}{\mathbf{w}}
\newcommand{\be}{\mathbf{e}}
\newcommand{\bc}{\mathbf{c}}
\newcommand{\bv}{\mathbf{v}}
\newcommand{\cF}{\mathcal{F}}
\title{Threshold graphs, Kemeny's constant, and related random walk parameters}
\author{Jane Breen\textsuperscript{1}, Sooyeong Kim\textsuperscript{2}\footnote{Contact: kimswim@yorku.ca}, Alexander Low Fung\textsuperscript{3}, Amy Mann\textsuperscript{4},\\ Andrei A. Parfeni\textsuperscript{5} and Giovanni Tedesco\textsuperscript{6}}
\begin{document}

\maketitle

\begin{abstract}
    Kemeny's constant measures how fast a random walker moves around in a graph. Expressions for Kemeny's constant can be quite involved, and for this reason, many lines of research focus on graphs with structure that makes them amenable to more in-depth study (for example, regular graphs, acyclic graphs, and 1-connected graphs). In this article, we study Kemeny's constant for random walks on threshold graphs, which are an interesting family of graphs with properties that make examining Kemeny's constant difficult; that is, they are usually not regular, not acyclic, and not 1-connected. This article is a showcase of various techniques for calculating Kemeny's constant and related random walk parameters for graphs. We establish explicit formulae for $\mathcal{K}(G)$ in terms of the construction code of a threshold graph, and completely determine the ordering of the accessibility indices of vertices in threshold graphs.
\end{abstract}

\noindent {\bf Keywords:} random walk, threshold graph, Kemeny's constant, random walk centrality, accessibility index

\noindent \textbf{AMS subject classifications:} 60J10, 05C81, 05C50, 05A19 

\addtocounter{footnote}{1}
\footnotetext{Faculty of Science, Ontario Tech University, Oshawa, Ontario, Canada}

\addtocounter{footnote}{1}
\footnotetext{Department of Mathematics and Statistics, York University, Toronto, Ontario, Canada}

\addtocounter{footnote}{1}
\footnotetext{Department of Mathematics, San Francisco State University, San Francisco, California, USA}%Department of Mathematics, San Francisco State University, Thornton Hall 937,1600 Holloway Avenue,San Francisco, California 94132
\addtocounter{footnote}{1}
\footnotetext{Department of Mathematics, University of Toronto, Toronto, Ontario, Canada}%Department of Mathematics, University of Toronto, 40 St. George Street, Toronto, Ontario, Canada, M5S 2E4
\addtocounter{footnote}{1}
\footnotetext{Department of Mathematics, Yale University, New Haven, Connecticut, USA}%Department of Mathematics, Yale University, P.O. Box 208283, 219 Prospect St, New Haven, CT 06520-8283, USA
\addtocounter{footnote}{1}
\footnotetext{Department of Mathematical and Computational Sciences, University of Toronto Mississauga, Mississauga, Ontario, Canada}%Department of Mathematical and Computational Sciences, University of Toronto Mississauga, 3359 Mississauga Road Mississauga, Ontario, L5L 1C6, Canada

\section{Introduction}

Random walks are frequently used to uncover qualitative and quantitative information about the structure of graphs. Given a graph $G$ with vertices $V = \{v_1, v_2, \ldots, v_n\}$, a random walk on the vertices of $G$ is a Markov chain in which the state space is the vertex set $V$, and the transition probability $t_{i,j}$ of moving from the $i^{th}$ state to the $j^{th}$ state is given by 
\[t_{i,j} = \left\{\begin{array}{cc} \frac{1}{\deg(v_i)}, & \mbox{if $v_i$ is adjacent to $v_j$};\\
0, & \mbox{otherwise.}\end{array}\right.\]
That is, a random walk on a graph can be conceptualized as a dynamical process in which a `random walker' occupies a vertex of the graph, and at each time-step, chooses an adjacent vertex to their current one uniformly at random and moves to it. By exploring the behaviour of this stochastic process, qualities of the graph can be uncovered. For example, the mixing properties of the corresponding Markov chain are associated with \emph{expansion properties} of the graph (see \cite{fanchung}).

Kemeny's constant is a Markov chain parameter which has recently been the subject of many research papers in the context of its role as a graph invariant measuring how `well-connected' the graph is (see, for example, \cite{kirkland2016kemeny} for Braess edges, \cite{altafini2023edge} for edge centrality score via Kemeny's constant and \cite{kim2023bounds} for the Nordhaus-Gaddum problem regarding Kemeny's constant). Loosely speaking, Kemeny's constant for a graph, denoted $\mathcal{K}(G)$, can be interpreted as the expected length of a random trip between vertices of $G$ (a formal definition is given in the next section). It can also be derived from the eigenvalues of the transition matrix $T$ for the random walk on the graph, meaning it is a spectral graph invariant. 

Kemeny's constant can be quite difficult to work with and results in this area has been constrained by this, with many lines of research focusing on graphs with structure that makes them amenable to more in-depth study. Divide-and-conquer expressions for $\mathcal{K}(G)$ are given in \cite{breen2022kemeny, faught20221} for graphs with a cut vertex or a bridge. An expression for $\mathcal{K}(G)$ in terms of spanning trees and forests of $G$ is given in \cite{kirkland2016kemeny}, and as such Kemeny's constant has been well-studied in the case that $G$ is a tree (see \cite{ciardo2022kemeny, jang2023kemeny, kirkland2016kemeny}), since those formulas become more straightforward in the acyclic case. Families of large-diameter graphs are considered in \cite{breen2019computing}; considering such families allows for small changes in the value of $\mathcal{K}(G)$ to be ignored in pursuit of general results about relationships between $\mathcal{K}(G)$ and other structural features of the graph. Note that the maximum order of magnitude for $\mathcal{K}(G)$ is $\mathcal{O}(n^3)$ for a graph $G$ of order $n$, and the conjectured extremal graph is a \emph{barbell graph}. Finally, results for Kemeny's constant for regular graphs are easily derived from existing spectral results for graph matrices other than the probability transition matrix, such as the Laplacian or adjacency matrix (see \cite{palacios2011broder}). 

In this article, we consider the task of calculating Kemeny's constant for \emph{threshold graphs}, an interesting family of graphs which are not typically sparse or acyclic, not regular, and have diameter 2 in the connected case (which we exclusively consider). In this article, we prove several results concerning the range of values of $\mathcal{K}(G)$ in the case that $G$ is a threshold graph, including how Kemeny's constant may be directly computed from the so-called \emph{construction code} for the threshold graph. In addition to the contribution of these results, this article should be considered as a showcase of various techniques to calculate Kemeny's constant: the difficulty of working with $\mathcal{K}(G)$ in the case that the graph family $G$ is not constrained as described above is highlighted in this article, even as we produce complex and involved expressions for $\mathcal{K}(G)$ using surprisingly elegant lemmas.

In Section 2, we introduce Kemeny's constant formally, as well as additional mathematical preliminaries. In Section 3, we consider the construction code of a threshold graph and how to derive Kemeny's constant from this alone, using a particular unitary matrix which can be shown to diagonalize the Laplacian matrix of any threshold graph, once its rows and columns are arranged with respect to the construction code. At the end of the section, we demonstrate how these results are implemented for a specific family of threshold graphs conjectured to give the maximum value of $\mathcal{K}(G)$, and use the  reduction in computation complexity for $\mathcal{K}(G)$ provided by these results to confirm the conjecture up to $n=21$. In Section 4, we consider combinatorial expressions for Kemeny's constant and related parameters, and showcase how those may be calculated for threshold graphs from their unique structure.

\section{Preliminaries}

\subsection{Graphs and matrices}
Given a graph $G$ with vertices $\{v_1, v_2, \ldots, v_n\}$, the \emph{adjacency matrix} of $G$ is the $n\times n$ matrix $A=[a_{i,j}]$, whose rows and columns are indexed by the vertices of $G$ and whose entries are defined
\[a_{i,j}= \left\{\begin{array}{cc} 1, & \mbox{if $v_i$ is adjacent to $v_j$};\\
0, & \mbox{otherwise.}\end{array}\right.\]
The \emph{degree matrix} of a graph is a diagonal matrix $D$ whose $i^{th}$ diagonal entry is $d_i:=\deg(v_i)$. The \emph{Laplacian matrix} of a graph is defined $L=D-A$, while the random walk matrix---or the transition matrix for the random walk on $G$---is given by $D^{-1}A$. 

\subsection{Kemeny's constant}
A finite, discrete-time, time-homogeneous Markov chain can be thought of as a random process which---at any given time---occupies one out of a finite number of states index $1, 2, \ldots, n$, and transitions from one state to another in finite time-steps of fixed length. The probability of transitioning from the $i^{th}$ state to the $j^{th}$ state in a single time-step is denoted $t_{i,j}$; this probability is fixed, and does not depend on how much time has passed (\emph{time-homogeneity}). Furthermore, the assumption that the probability distribution of the states at time $k+1$ (i.e. in the next step) depends only on the probability distribution at time $k$ (i.e. the current state) is referred to as the \emph{Markov property}. With this in mind, the evolution of the random process can be seen to depend entirely on the matrix of transition probabilities $T=[t_{i,j}]$, with the $(i,j)$ entry of $T^k$ representing the probability of being in the $j^{th}$ state at time $k$, given that one starts in the $i^{th}$ state. Under certain additional assumptions (that the Markov chain is \emph{regular}, or that the transition matrix is \emph{primitive}), one can use Perron-Frobenius theory to determine that \[\lim_{k\to\infty} T^k = \mathbbm{1}w^\top,\]
where $\mathbbm{1}$ is the all-ones vector, and $w^\top$ is a positive row vector whose entries sum to 1. In particular $w$ is a probability distribution vector satisfying $w^\top T = w^\top$, indicating that $w$ is the stationary distribution of the Markov chain, and the $i^{th}$ entry of $w$ represents the long-term probability that the random process occupies the $i^{th}$ state. The short-term behaviour of the Markov chain is encapsulated in the \emph{mean first passage times}, in which $m_{ij}$ denotes the expected time to reach state $j$, given that the process starts in state $i$. This, too, may be calculated using the transition matrix $T$. We direct the interested reader to \cite{kemenysnell} for details on the computation of $m_{ij}$ and further information on Markov chains; these concepts are introduced here purely to motivate the definition of Kemeny's constant, and will not be used throughout the body of the paper. 

Given a Markov chain with transition matrix $T$, fix a starting state $i$, and consider the following quantity:
\[\kappa_i = \sum_{\substack{j=1\\j\neq i}}^n w_jm_{i,j}.\]
This quantity may be interpreted as the expected time it takes to reach a randomly-chosen destination state from a fixed starting state $i$. Astonishingly, this quantity is easily-shown to be independent of $i$; as such, it is referred to as Kemeny's constant, and denoted $\mathcal{K}(T)$. Since $\sum_i w_i = 1$, the expression can be re-written to produce 
\[\mathcal{K}(T) = \sum_{\substack{ i, j = 1\\i\neq j}}^n w_im_{i,j}w_j,\]
allowing the interpretation of Kemeny's constant as the expected length of a random trip between states of the Markov chain, where both the initial state and destination state are randomly-chosen with respect to the stationary distribution.

Let $G$ be a graph with vertex set $V(G) = \{v_1, v_2 \ldots, v_n\}$ and let $d_i = \deg(v_i)$. The random walk on the graph $G$ is a Markov chain with state space given by $V(G)$, and whose transition matrix is $T = D^{-1}A$. Note that the stationary distribution vector for the random walk on $G$ is proportional to the degree vector of $G$; that is, $w_i = \frac{d_i}{2m}$, where $m$ is the number of edges in $G$. Therefore, the relative `importance' of a vertex in the random walk on the graph is determined by its degree. Furthermore, Kemeny's constant can be interpreted in the graph context as the expected length of a random trip in the graph $G$, or a weighted average of the mean first passage times between vertices, where high-degree vertices receive a larger weight in the calculation. For a simple random walk on a graph, we denote the value of Kemeny's constant as $\mathcal{K}(G)$, and treat it as a graph invariant representing how `well-connected' the vertices of the graph are.

We now outline several alternate expressions for Kemeny's constant used in this paper. In \cite{levene2002kemeny}, it is shown that for a Markov chain with transition matrix $T$ having eigenvalues $1, \rho_2, \ldots, \rho_n$, we have
\begin{equation}\label{eq:kemll}\mathcal{K}(T) =\sum_{j=2}^n \frac{1}{1-\rho_j}.\end{equation}
Of great use when considering Kemeny's constant for random walks on graphs is the following combinatorial expression for $\mathcal{K}(G)$. 
\begin{proposition}[\cite{kirkland2016kemeny}]\label{prop:F-exp}
Let $G$ be a connected graph with vertices $v_1, v_2, \ldots, v_n$, with degree vector  $\mathbf{d} = \begin{bmatrix} d_1 & d_2 &\cdots & d_n\end{bmatrix}^\top$. Let $m$ be the number of edges of $G$, $\tau$ the number of spanning trees of $G$, and let $F = [f_{i,j}]$ be the matrix whose $(i,j)$ entry $f_{i,j}$ denotes the number of spanning forests of $G$ consisting of exactly two trees, one containing $v_i$ and the other containing $v_j$.  Then 
\[\mathcal{K}(G) = \frac{\mathbf{d}^\top F \mathbf{d}}{4m\tau}.\] %= \sum_{i, j=1}^n \frac{d_if_{i,j}d_j}{4m\tau}
\end{proposition}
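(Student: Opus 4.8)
The plan is to route the computation through commute times and effective resistances, which convert the probabilistic quantity $\mathcal{K}(G)$ into the desired spanning-forest count. First I would start from the interpretation recorded above, $\mathcal{K}(G) = \sum_{i \neq j} w_i m_{i,j} w_j$, and symmetrize: the weights $w_i w_j$ are symmetric in $i$ and $j$, whereas the mean first passage times $m_{i,j}$ are not, so I would pair the terms indexed by $(i,j)$ and $(j,i)$ to obtain
\[\mathcal{K}(G) = \sum_{i<j} w_i w_j \bigl(m_{i,j} + m_{j,i}\bigr) = \tfrac{1}{2}\sum_{i \neq j} w_i w_j C_{i,j},\]
where $C_{i,j} = m_{i,j} + m_{j,i}$ is the commute time between $v_i$ and $v_j$. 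This step trades the awkward asymmetric hitting times for the symmetric, and far more tractable, commute times.

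The second step invokes two classical facts. The first is the identity of Chandra \etal relating commute time to effective resistance, namely $C_{i,j} = 2m\,R_{i,j}$, where $R_{i,j}$ is the effective resistance between $v_i$ and $v_j$ when $G$ is regarded as an electrical network with a unit resistor on each edge. The second is the spanning-forest expression for effective resistance arising from the Matrix-Tree Theorem: $R_{i,j} = f_{i,j}/\tau$, since the number of spanning $2$-forests separating $v_i$ from $v_j$ is exactly the relevant cofactor of the Laplacian, while $\tau$ is its principal cofactor. Combining these yields $C_{i,j} = 2m\,f_{i,j}/\tau$.

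Finally I would substitute the stationary distribution $w_i = d_i/(2m)$ together with the previous identity into the symmetrized expression:
\[\mathcal{K}(G) = \frac{1}{2}\sum_{i \neq j}\frac{d_i}{2m}\cdot\frac{d_j}{2m}\cdot\frac{2m\,f_{i,j}}{\tau} = \frac{1}{4m\tau}\sum_{i \neq j} d_i d_j\, f_{i,j}.\]
Since $f_{i,i} = 0$ (no spanning forest can place a single vertex in two distinct trees), the diagonal contributions vanish, and the sum is precisely the quadratic form $\mathbf{d}^\top F \mathbf{d}$, giving the claimed formula.

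The main obstacle is the justification of the two classical facts rather than the algebra, which is routine. For a self-contained argument, the cleanest path is to prove $R_{i,j} = f_{i,j}/\tau$ directly from the all-minors Matrix-Tree Theorem, writing effective resistance as a ratio of Laplacian cofactors and identifying each cofactor combinatorially as a count of spanning forests; the commute-time identity $C_{i,j} = 2m\,R_{i,j}$ can then be recovered from the group-inverse (equivalently, fundamental-matrix) formula for mean first passage times. The delicate bookkeeping is tracking the factor $2m$ consistently between the electrical-network normalization and the random-walk normalization, and verifying that the symmetrization neither drops nor double-counts any off-diagonal terms.
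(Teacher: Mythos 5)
Your argument is correct. The paper does not prove this proposition---it is quoted from the cited reference---but the chain of identities you use (symmetrizing the mean first passage times into commute times, the commute-time/effective-resistance identity $C_{i,j}=2m\,r_{i,j}$, and Shapiro's electrical lemma $r_{i,j}=f_{i,j}/\tau$) is exactly the set of facts the paper itself records immediately after the statement, where it notes $r_{i,j}=f_{i,j}/\tau$ and the equivalent form $\mathcal{K}(G)=\mathbf{d}^\top R\,\mathbf{d}/(4m)$. The bookkeeping in your final substitution is right, and the observation that $f_{i,i}=0$ lets the off-diagonal sum be written as the quadratic form $\mathbf{d}^\top F\mathbf{d}$; the only remaining work for a fully self-contained proof is, as you say, establishing the two classical inputs, both of which are standard.
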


Let $R$ be the matrix given by $R = [r_{i,j}]$, where $r_{i,j}$ is the so-called \textit{effective resistance} (see \cite{bapat2010graphs}) between vertices $v_i$ and $v_j$. The quantity $r_{i,j}$ is given by $r_{i,j}=\ell_{i,i}^\dagger + \ell_{j,j}^\dagger - 2\ell_{i,j}^\dagger$ where $L^\dagger$ is the Moore-Penrose inverse of the Laplacian matrix $L$ of $G$, which is given by $L = D-A$. It appears in \cite{shapiro1987electrical} that $r_{i,j} = \frac{f_{i,j}}{\tau}$. Hence, we have 
\[\mathcal{K}(G) = \frac{\mathbf{d}^\top R \mathbf{d}}{4m}.\] 
%This was shown independently in \cite{wang2017kemeny} using the expression in Prop.~\ref{prop:L-exp}, and the fact that $r_{i,j} = \ell_{i,i}^\dagger + \ell_{j,j}^\dagger - 2\ell_{i,j}^\dagger.$

While the relationship between Kemeny's constant and the eigenvalues of Laplacian matrix for a graph $G$ was originally discussed in \cite{palacios2011broder} in the case that $G$ is regular, the general case was considered in \cite{wang2017kemeny}, in which the authors give the following expression.

\begin{proposition}[\cite{wang2017kemeny}]\label{prop:L-exp}
Let $G$ be a connected graph of order $n$ with $m$ edges, degree vector $\mathbf{d}$, and Laplacian matrix $L$. Then 
\[\mathcal{K}(G) = \mathbf{q}^\top \mathbf{d} - \frac{\mathbf{d}^\top L^\dagger \mathbf{d}}{2m},\]
where %$L^\dagger$ denotes the Moore-Penrose inverse of $L$, and 
$\mathbf{q}^\top$ is the column vector of diagonal entries of $L^\dagger$; $\mathbf{q}^\top = \begin{bmatrix} \ell_{1,1}^\dagger & \ell_{2,2}^\dagger & \cdots & \ell_{n,n}^\dagger\end{bmatrix}$.
\end{proposition}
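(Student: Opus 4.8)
The plan is to derive the identity directly from the effective-resistance formula $\mathcal{K}(G) = \frac{\mathbf{d}^\top R \mathbf{d}}{4m}$ established immediately above the statement (itself a consequence of Proposition~\ref{prop:F-exp} together with $r_{i,j} = f_{i,j}/\tau$), rather than from the eigenvalue expression \eqref{eq:kemll}. The reason to prefer the resistance route is that the transition matrix $T = D^{-1}A$ is not symmetric and its spectrum is not simply related to that of $L$ outside the regular case, so \eqref{eq:kemll} does not readily produce a formula in terms of $L^\dagger$; by contrast, every entry of $R$ is already an \emph{explicit} linear combination of entries of $L^\dagger$, so the quadratic form $\mathbf{d}^\top R \mathbf{d}$ unfolds into quadratic forms in $L^\dagger$ and in the diagonal vector $\mathbf{q}$.

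The first step is to substitute $r_{i,j} = \ell_{i,i}^\dagger + \ell_{j,j}^\dagger - 2\ell_{i,j}^\dagger$ into the double sum and split it into three pieces:
\[
\mathbf{d}^\top R \mathbf{d} = \sum_{i,j=1}^n d_i d_j\, r_{i,j} = \sum_{i,j} d_i d_j \ell_{i,i}^\dagger + \sum_{i,j} d_i d_j \ell_{j,j}^\dagger - 2\sum_{i,j} d_i d_j \ell_{i,j}^\dagger.
\]
The second step is the (elementary) evaluation of each piece. For the first term, the $\ell_{i,i}^\dagger$ factor does not depend on $j$, so the sum factors as $\bigl(\sum_i d_i \ell_{i,i}^\dagger\bigr)\bigl(\sum_j d_j\bigr) = 2m\,(\mathbf{q}^\top \mathbf{d})$, using that $\sum_j d_j = 2m$ and that $\mathbf{q}^\top \mathbf{d} = \sum_i d_i \ell_{i,i}^\dagger$. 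By the symmetry $i \leftrightarrow j$, the second term contributes the same value $2m\,(\mathbf{q}^\top \mathbf{d})$. The cross term is, by definition of a quadratic form, exactly $\sum_{i,j} d_i d_j \ell_{i,j}^\dagger = \mathbf{d}^\top L^\dagger \mathbf{d}$.

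Combining the three evaluations gives $\mathbf{d}^\top R \mathbf{d} = 4m\,(\mathbf{q}^\top \mathbf{d}) - 2\,\mathbf{d}^\top L^\dagger \mathbf{d}$, and dividing by $4m$ yields $\mathcal{K}(G) = \mathbf{q}^\top \mathbf{d} - \frac{\mathbf{d}^\top L^\dagger \mathbf{d}}{2m}$, as claimed. I do not expect a genuine obstacle here: the argument is an algebraic reorganization of an already-proved formula. The only point requiring a moment's care is the index bookkeeping, and in particular that summing over the full index set (including the diagonal $i=j$) is legitimate; this is harmless because the stated formula for $r_{i,j}$ automatically gives $r_{i,i} = \ell_{i,i}^\dagger + \ell_{i,i}^\dagger - 2\ell_{i,i}^\dagger = 0$, matching the convention $f_{i,i}=0$ underlying the resistance expression. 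As a consistency check one may specialize to a $k$-regular graph, where $\mathbf{d} = k\mathbbm{1}$ and $L^\dagger \mathbbm{1} = 0$ force the second term to vanish, recovering the known regular-case identity.
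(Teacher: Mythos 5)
Your derivation is correct. Note that the paper itself offers no proof of this proposition --- it is quoted verbatim from Wang, Dubbeldam and Van Mieghem \cite{wang2017kemeny} --- so there is no in-paper argument to compare against; your route, expanding $\mathbf{d}^\top R\mathbf{d}$ via $r_{i,j}=\ell_{i,i}^\dagger+\ell_{j,j}^\dagger-2\ell_{i,j}^\dagger$ and collecting the three pieces into $4m\,(\mathbf{q}^\top\mathbf{d})-2\,\mathbf{d}^\top L^\dagger\mathbf{d}$, is the natural one given the resistance formula $\mathcal{K}(G)=\mathbf{d}^\top R\mathbf{d}/(4m)$ recorded immediately before the statement, and your observations about the diagonal terms vanishing and the regular-graph consistency check are both sound. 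The only caveat worth stating explicitly is that your argument is conditional on Proposition~\ref{prop:F-exp} and the identity $r_{i,j}=f_{i,j}/\tau$, both of which the paper likewise imports without proof, so you have reduced one cited result to two others rather than given a from-scratch proof --- which is entirely reasonable in context.
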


%A connection between $f_{i,j}$ and the so-called \emph{effective resistance distance} $r_{i,j}$ of two vertices $i$ and $j$ is given in \cite{shapiro1987electrical} by $r_{i,j} = \frac{f_{i,j}}{\tau}$. 
 
We finish this section by introducing one final random walk parameter, conceptually linked to Kemeny's constant but serving a different role in the analysis of graphs. %\Jane{Include accessibility index here.}
Given a Markov chain with stationary vector $w$, and mean first passage times $m_{i,j}$, fix an index $j$ and define 
\[\alpha(j) = \sum_{\substack{i=1\\i\neq j}}^n w_im_{i,j}.\]
This may be interpreted as the expected length of time to reach the $j^\text{th}$ state, where the starting state is chosen at random with respect to the stationary distribution. As such, $\alpha(j)$ is referred in \cite{kirkland2016random} to as the \emph{accessibility index} of state $j$, and in the context of random walks on graphs, can be considered as a centrality measure of the vertices in the graph. Indeed, the accessibility index is usually studied in network science literature in alternate form as the \emph{random walk centrality} \cite{noh2004random}, which is given by the reciprocal of the accessibility index, so that large values correspond to more central vertices in the graph. Additionally, note that $\mathcal{K}(G) = \sum_{j=1}^n w_j\alpha(j) - 1$.

We introduce a quantity related to the accessibility index. The moment $\mu(j)$ of a vertex $j$ in a graph $G$ is defined \cite{ciardo2022kemeny} as follows:
$$\mu(j) = \sum_{i\neq j} d_ir_{i,j}.$$
It turns out from \cite{breen2022kemeny} that they are related through the following formula:
\[
\mu(j) = \alpha(j) + \mathcal{K}(G).
\]
As $r_{i,j}$ can be obtained from the combinatorial quantities $f_{i,j}$ and $\tau$, this formula will be used in Section 4 when two accessibility indices are compared.

% In particular, we can use this relationship for two purposes. Firstly, we note that:
% \[ \label{eq_alpha_mu}
% \alpha(G,v) = \mu(G, v) - K(G)
% \]

% Secondly, we also notice, for a fixed threshold graph $G$, $\mu(G,v)$ and $\alpha(G,v)$ are maximized together (as functions of $v$), so any bounding (or ordering) result that is proven for one of them can therefore be easily extended to both of them. 

% We begin with some helpful characterizations of both $\mu$ and $\alpha$. Firstly, as is mentioned in Section \ref{Mean First Passage Times}, recall that defining $M$ as the matrix of mean first-passage times,
% \[
% \alpha^T = w^TM + \mathbb{1}
% \]
% For a given index $i$ corresponding to a vertex of $G$, we have that
% \[
% \alpha_i = \sum_{j}^n w_jM_{j,i}
% \]
% Here $w$ is the stationary vector with entries $w_i = \frac{d_i}{2|E|}$ where $d_i$ is the $i$-th entry of the degree vector associated with $G$. Now, by looking at $\mu$, we see that, by \cite{OSF2021},
% \begin{equation} \label{eq_basic_mu}
% 	\mu(G,i) = \mathbf{d}^TRe_i = \sum_{j} d_jR_{j,i}
% \end{equation}

% This second characterization in terms of $R$, the resistance matrix, is going to allow us to show what vertices maximize and minimize $\mu$, and, therefore, $\alpha$ as well.\\
% \Sooyeong{Cite \cite{HK1996} and introduce code notation for threshold graphs in Preliminaries}

\subsection{Threshold graphs}
The class of graphs we consider in this paper are \emph{threshold graphs}. There are many equivalent definitions of threshold graphs (see \cite{thresholdbook}). We will primarily work with the following definition:

\begin{definition}
A {\em threshold graph} is a graph that can be constructed from a single vertex by repeatedly adding an isolated vertex or a dominating vertex.
\end{definition}

In particular, such a graph $G$ can be represented uniquely by a binary string $c_1c_2 \cdots c_n$ called the \emph{construction sequence} or \emph{construction code} of the threshold graph $G$, in which $c_i=0$ indicates that the $i^{th}$ vertex added is an isolated vertex, and $c_i=1$ if the $i^{th}$ vertex added is a dominating one. (As an example, see the threshold graph $G$ with construction sequence $01100011$ in Figure~\ref{fig:ex1}). Our convention in this article is to represent $v_1$---the initial vertex of the graph---with a 0 at the beginning of the sequence. Note that $G$ is connected only if $c_n=1$, meaning that $G$ has a dominating vertex, and the diameter of the graph is 2. We note that Kemeny's constant is undefined for disconnected graphs (and is usually taken to be infinity); we consider only connected threshold graphs in this article. Since there is a one-to-one correspondence between threshold graphs and the construction sequence, we can conclude that there are $2^{n-2}$ connected threshold graphs of order $n$.

When $0$ (resp. $1$) appears $s$ times (resp. $t$ times) consecutively in a construction code, we use $\mathbf{0}^s$ (resp. $\mathbf{1}^t$) to denote those zeros (resp. those ones). For the example in Figure~\ref{fig:ex1}, the construction code can be written as $0\mathbf{1}^2\mathbf{0}^3\mathbf{1}^2$.

The eigenvalues associated with threshold graphs are well-studied; see \cite{hammer1996laplacian} for the seminal work regarding the Laplacian spectrum of a threshold graph. For the eigenvalues of the normalized Laplacian and the random walk matrix, see \cite{banerjee2017normalized}, which makes use of quotient matrix techniques and equitable partitions to compute the eigenvalues associated with $D^{-1/2}(D-A)D^{-1/2}$ and $D^{-1}A$. Various spectral graph invariants have also been considered for threshold graphs, such as the energy \cite{jacobs2015eigenvalues}. In many cases, both spectral and structural information is directly related to the construction code of $G$; see, for example, \cite{bapat2013adjacency} where the inertia of the adjacency matrix is determined by the number of ones and zeros in the construction code, and \cite{hammer1996laplacian} where the number of spanning trees and the eigenvalues themselves are determined directly from the construction code.

%\begin{itemize}
%\item
%Let $G_1$ be the trivial graph consisting of a single vertex $v_1$.
%\item
%For $i=2, \ldots, n$, let 
%\[G_{i} = \left\{\begin{array}{cc} G_{i-1} \,\dot{\cup}\, \{v_{i}\}, & \mbox{if } b_i=0;\\
%G_{i-1}\vee \{v_i\}, & \mbox{if } b_i=1,\end{array}\right.\]
%\end{itemize}
%where $H_1\dot{\cup} H_2$ denotes the disjoint union of the graphs $H_1$ and $H_2$, and $H_1\vee H_2$ denotes the join of the graphs $H_1$ and $H_2$. Then $G=G_n$ is the threshold graph with construction sequence $\mathbf{b}$. Our convention in this paper is to 

\begin{figure}
\begin{center}
\begin{tikzpicture}[every node/.style = {inner sep = 1.2pt}, scale =0.8]
\foreach \x in {1,...,8} \node (\x) at ({2*cos(22.5+45*(\x-1))}, {2*sin(22.5+45*(\x-1))}) {};
\foreach \x in {1, ..., 6} \draw[thick] (8)--(\x)--(7);
\draw[thick] (7)--(8);
\draw[thick] (1)--(2)--(3)--(1);
\foreach \x in {1, ..., 8} \draw[black, fill=black] (\x) circle(3pt);
\draw (1) node [anchor=south west] {$v_1$};
\draw (2) node [anchor=south west] {$v_2$};
\draw (3) node [anchor=south east] {$v_3$};
\draw (4) node [anchor=south east] {$v_4$};
\draw (5) node [anchor=east] {$v_5$};
\draw (6) node [anchor=north east] {$v_6$};
\draw (7) node [anchor=north west] {$v_7$};
\draw (8) node [anchor=north west] {$v_8$};
\end{tikzpicture}
\end{center}
\caption{The threshold graph corresponding to the construction sequence $01100011$.}\label{fig:ex1}
\end{figure}
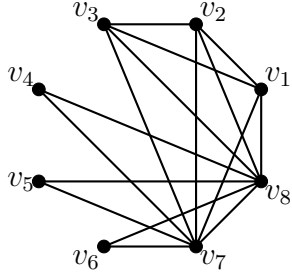

% \begin{example}
% The threshold graph $G$ with construction sequence $01100011$ (or $0\mathbf{1}^2\mathbf{0}^3\mathbf{1}^2$) is shown in Fig.~\ref{fig:ex1}.

% \end{example}

\section{A formula for Kemeny's constant directly from the construction code}
\label{sec:formula}

In this section, we derive an involved formula for Kemeny's constant of a threshold graph using only the construction code. This result uses the Laplacian formulation of Kemeny's constant in Proposition~\ref{prop:L-exp}, and hinges on a surprising result that the Laplacian matrix of every threshold graph may be diagonalized by a single common unitary matrix, which we find in Section \ref{subsec:diag}. This follows from the (equally surprising) fact that, when written with respect to an appropriate ordering of the vertices, the Laplacian matrices of any two threshold graphs commute. 

\subsection{Diagonalizing the Laplacian matrix of a threshold graph}
\label{subsec:diag}

We begin with introducing a well-known result. 

\begin{theorem}[{\cite[Theorem 2.5.5]{hornjohnson}}]\label{thm:universal and unique}
	Let $\mathcal{A}$ be a set of some $n\times n$ normal matrices. Then every pair of matrices in $\mathcal{A}$ commutes if and only if all matrices are simultaneously unitarily diagonalizable. Moreover, for any given matrix $A_0\in\mathcal{A}$ and for any given ordering $\lambda_1,\dots,\lambda_n$ of the eigenvalues of $A_0$, there is an $n\times n$ unitary matrix $U$ such that $U^{-1}A_0U = \mathrm{diag}(\lambda_1,\dots,\lambda_n)$ and $U^{-1}BU$ is diagonal for every $B\in \mathcal{A}$. 
\end{theorem}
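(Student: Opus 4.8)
The plan is to treat the two implications separately and then address the ``moreover'' clause. The forward implication is immediate: if a single unitary $U$ makes $U^{-1}BU$ diagonal for every $B\in\mathcal{A}$, then for any $A,B\in\mathcal{A}$ the diagonal matrices $U^{-1}AU$ and $U^{-1}BU$ commute, and conjugating back by $U$ gives $AB=BA$. All of the substance therefore lies in the converse together with the control over the ordering of the spectrum of $A_0$.

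For the converse I would argue by induction on $n$, the case $n=1$ being trivial. The engine is that commutation transfers eigenspace invariance: fix any $A_0\in\mathcal{A}$ and let $E$ be its eigenspace for an eigenvalue $\mu$. If $A_0v=\mu v$ and $B\in\mathcal{A}$, then $A_0(Bv)=BA_0v=\mu(Bv)$, so $BE\subseteq E$; hence every member of $\mathcal{A}$ leaves each eigenspace of $A_0$ invariant. To keep the inductive hypothesis applicable I need the restrictions to stay normal, which rests on the (elementary but essential) fact that an invariant subspace of a normal matrix is reducing: writing $B$ in block form with respect to $E\oplus E^{\perp}$ and comparing the $(1,1)$ blocks of $BB^{*}$ and $B^{*}B$, then taking traces, forces the off-diagonal block to vanish, so $E^{\perp}$ is invariant as well and $B|_{E}$ is normal. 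Commutation is inherited by restrictions, so each eigenspace carries a commuting family of normal operators of smaller dimension.

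The induction now splits into two cases. If every matrix in $\mathcal{A}$ is a scalar multiple of the identity, any orthonormal basis diagonalizes all of them; otherwise some $A_0\in\mathcal{A}$ has at least two distinct eigenvalues, so each of its (mutually orthogonal, by normality) eigenspaces has dimension strictly less than $n$. Applying the inductive hypothesis on each eigenspace gives an orthonormal basis of that eigenspace simultaneously diagonalizing all the restrictions there; concatenating these bases across eigenspaces yields a common orthonormal eigenbasis of $\mathbb{C}^{n}$ for every $B\in\mathcal{A}$, whose vectors form the columns of $U$. (The cardinality of $\mathcal{A}$ is irrelevant, since the induction is on $n$ alone.) For the ``moreover'' clause I would exploit the freedom to order this basis: because $A_0$ acts as the single scalar $\mu$ on all of its eigenspace $E_{\mu}$, any permutation of the basis vectors within $E_{\mu}$ leaves $U^{-1}A_0U$ unchanged while keeping every $U^{-1}BU$ diagonal. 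Given a prescribed ordering $\lambda_1,\dots,\lambda_n$ of the eigenvalues of $A_0$—a permutation of the spectrum with multiplicity, in which each value occurs as often as the dimension of its eigenspace—I assign to the positions carrying the value $\mu$ exactly the common eigenbasis vectors spanning $E_{\mu}$. The resulting $U$ satisfies $U^{-1}A_0U=\mathrm{diag}(\lambda_1,\dots,\lambda_n)$ while still diagonalizing all of $\mathcal{A}$.

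I expect the reducing-subspace step to be the crux: one must verify that the restriction of each normal $B$ to a common invariant eigenspace is again normal, which is precisely what allows the induction to close. A more conceptual alternative is to invoke the Fuglede--Putnam theorem, which guarantees that commuting normal matrices also commute with one another's adjoints; then $\mathcal{A}\cup\{B^{*}:B\in\mathcal{A}\}$ is a commuting family and the normality of all restrictions becomes automatic.
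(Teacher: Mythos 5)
The paper does not prove this statement; it is quoted verbatim from Horn and Johnson (Theorem 2.5.5) and used as a black box. Your argument is correct and is essentially the standard textbook proof: eigenspaces of $A_0$ are invariant under every commuting $B$, invariant subspaces of normal matrices are reducing (so restrictions stay normal), and induction on $n$ closes; the reordering of the common eigenbasis within each eigenspace of the \emph{given} $A_0$ correctly delivers the ``moreover'' clause, since each basis vector of the final common eigenbasis already lies in some eigenspace of $A_0$. Nothing further is needed.
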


This theorem further suggests that if there exists a matrix $A$ in $\mathcal{A}$ such that all eigenvalues are simple, then a matrix whose columns consist of normalized eigenvectors of $A$ diagonalizes every matrix in $\mathcal{A}$; furthermore, it is unique up to permutation of columns and up to signs of columns.

In this subsection, we shall show that the set of Laplacian matrices of all threshold graphs with construction code on $n$ vertices is such a family, where every pair commutes and there exists a threshold graph whose Laplacian eigenvalues are simple.

\begin{lemma} \label{lemma_commuting_laplacians}
Let $G_1$ and $G_2$ be threshold graphs of the same order, not necessarily connected, and label the vertices in order according to the construction codes of the graphs. Let $L_1$ and $L_2$ be the Laplacian matrices of $G_1$ and $G_2$, respectively. Then $L_1L_2 = L_2L_1$.
\end{lemma}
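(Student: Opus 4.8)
The plan is to reduce the statement to the commutativity of a fixed family of very simple matrices—the star Laplacians—which turn out to be simultaneously diagonalizable by one explicit basis that does not depend on the code.

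First I would record the edge structure of a threshold graph. From the construction, an edge $\{v_i,v_j\}$ with $i<j$ is present if and only if the later vertex $v_j$ was added as a dominating vertex, i.e.\ iff $c_j=1$; equivalently the adjacency matrix satisfies $a_{i,j}=c_{\max(i,j)}$ for $i\neq j$. Writing the Laplacian as a sum of edge-Laplacians then gives
\[
L(c)=\sum_{\{v_i,v_j\}\in E(G)}(e_i-e_j)(e_i-e_j)^\top=\sum_{k=2}^{n}c_k\,\Lambda_k,\qquad \Lambda_k:=\sum_{i<k}(e_i-e_k)(e_i-e_k)^\top,
\]
where $e_i$ is the $i$-th standard basis vector and $\Lambda_k$ is precisely the Laplacian of the star with centre $k$ and leaves $1,\dots,k-1$. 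Since the Laplacians of $G_1$ and $G_2$ are both binary linear combinations of the same matrices $\Lambda_2,\dots,\Lambda_n$, bilinearity of the commutator reduces the lemma to a single claim: the star Laplacians $\Lambda_k$ commute pairwise.

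To prove that claim I would exhibit a common eigenbasis rather than expand products. For $2\le m\le n$ set $w_m:=\sum_{i<m}e_i-(m-1)e_m$. A short computation shows that each $w_m$ is an eigenvector of every $\Lambda_k$: one finds $\Lambda_k w_m=0$ when $k<m$ (here $w_m$ is constant on the coordinates $1,\dots,k$ that $\Lambda_k$ acts on), $\Lambda_m w_m=m\,w_m$ (the centre-versus-leaves mode of the star), and $\Lambda_k w_m=w_m$ when $k>m$. Together with $\mathbf{1}$, which lies in the kernel of every $\Lambda_k$, the vectors $w_2,\dots,w_n$ form a basis of $\mathbb{R}^n$ (they are staircase vectors, triangular with respect to the standard basis), so all the $\Lambda_k$ are simultaneously diagonalized by $\{\mathbf{1},w_2,\dots,w_n\}$ and therefore commute. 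This yields $L_1L_2=L_2L_1$ at once.

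The delicate part is the discovery of the right common eigenvectors; the verification of the three eigenvalue cases is then routine, and I would present it as the crux of the argument. I also note that the $w_m$ and $\mathbf{1}$ are in fact mutually orthogonal, a fact worth flagging here because normalizing them produces exactly the single unitary matrix that Theorem~\ref{thm:universal and unique} promises and that is sought throughout Section~\ref{subsec:diag}. Finally, since $c_1$ never appears in $\sum_{k=2}^{n}c_k\Lambda_k$, the argument needs no connectivity or normalization hypothesis on the codes, consistent with the lemma allowing disconnected $G_1,G_2$. An alternative, more computational route would bypass the eigenbasis and verify $\Lambda_k\Lambda_\ell=\Lambda_\ell\Lambda_k$ directly from the expansion $\Lambda_k=P_{k-1}-s_{k-1}e_k^\top-e_ks_{k-1}^\top+(k-1)e_ke_k^\top$, with $P_{k-1}=\sum_{i<k}e_ie_i^\top$ and $s_{k-1}=\sum_{i<k}e_i$; this works, but the bookkeeping is the main obstacle along that path, which is why I prefer the eigenvector approach.
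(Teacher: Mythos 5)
Your proof is correct, but it takes a genuinely different route from the paper's. The paper argues by induction on $n$: it peels off the last vertex of each construction code and checks commutativity by block-matrix multiplication in three cases ($b_n=c_n=1$, $b_n=c_n=0$, $b_n\neq c_n$), using only that the rows and columns of a Laplacian sum to zero. You instead observe that $a_{i,j}=c_{\max(i,j)}$, decompose every threshold Laplacian over the \emph{same} fixed family of star Laplacians, $L(c)=\sum_{k=2}^{n}c_k\Lambda_k$, and then verify that the staircase vectors $w_m=\sum_{i<m}e_i-(m-1)e_m$ together with $\mathbbm{1}$ form a common (indeed orthogonal) eigenbasis of all the $\Lambda_k$, so the whole family commutes. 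Both arguments are complete; your eigenvalue computations ($\Lambda_kw_m=0$, $mw_m$, or $w_m$ according as $k<m$, $k=m$, $k>m$) check out, and the orthogonality you flag is what rescues the basis claim (triangularity alone does not quite separate $w_n$ from $\mathbbm{1}$). What your approach buys is considerable: normalizing $\{w_2,\dots,w_n,\mathbbm{1}\}$ produces exactly the unitary matrix $U$ of Proposition~\ref{prop:U}, and summing the star eigenvalues against the code gives $\lambda_i=(i+1)c_{i+1}+\theta_{i+1}=\theta_i+ic_{i+1}$, which is Lemma~\ref{eq_lambda}; so your single decomposition yields the lemma, the common eigenbasis, and the ordered spectrum in one stroke, where the paper establishes these in three separate arguments. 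The paper's induction, by contrast, is more self-contained (it needs no guess of the eigenvectors) and isolates commutativity as the only claim being proved, deferring the spectral information to Theorem~\ref{thm:universal and unique}.
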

\begin{proof}
We proceed by induction on $n$, the order of the graphs. The result clearly holds for $n=1$. For the inductive step, assume the lemma holds for graphs of order $n - 1$. We will show that it holds for graphs of order $n$. 

Let $G_1$ and $G_2$ be threshold graphs of order $n$, with construction sequences $b_1b_2\cdots b_{n-1} b_n$ and $c_1c_2\cdots c_{n-1}c_n$, respectively. Note that every threshold graph of order $n$ corresponds to a construction sequence of length $n$ that can be created by appending a 1 or 0 to the end of a construction sequence of length $n-1$. Let $L_1$ and $L_2$ be the Laplacian matrices of $G_1$ and $G_2$ respectively, and let $\hat{L}_1$ and $\hat{L}_2$ be the Laplacian matrices of the threshold graphs with construction codes $b_1b_2\cdots b_{n-1}$ and $c_1c_2\cdots c_{n-1}$, respectively. We will analyse the three possible cases: $b_n=c_n=1$, $b_n=c_n=0$, and $b_n\neq c_n$.
\begin{itemize}
\item[\emph{Case 1:}] $b_n=c_n=1$\\
Note that for $i=1, 2$:
\[L_i = \left[\begin{array}{c|c} \hat{L}_i + I & -\mathbbm{1}\\\hline -\mathbbm{1}^\top & n-1\end{array} \right],\]
where $\mathbbm{1}$ is a vector of all-ones and $I$ is the identity matrix of order $n-1$. Then 
\begin{eqnarray*}
L_1L_2 & = & \left[\begin{array}{c|c} \hat{L}_1 + I & -\mathbbm{1}\\\hline -\mathbbm{1}^\top & n-1\end{array} \right]\left[\begin{array}{c|c} \hat{L}_2 + I & -\mathbbm{1}\\\hline -\mathbbm{1}^\top & n-1\end{array} \right] \\
& = & \left[\begin{array}{c|c} (\hat{L}_1 + I)(\hat{L}_2+I) + \mathbbm{1}\mathbbm{1}^\top & -\hat{L}_1\mathbbm{1}-\mathbbm{1} - (n-1)\mathbbm{1}\\\hline -\mathbbm{1}^\top\hat{L}_2-\mathbbm{1}^\top - (n-1)\mathbbm{1}^\top & (n-1) + (n-1)^2\end{array} \right] \\
& = & \left[\begin{array}{c|c} \hat{L}_1\hat{L}_2 + \hat{L}_1 + \hat{L}_2 + I + \mathbbm{1}\mathbbm{1}^\top & -n\mathbbm{1}\\\hline -n\mathbbm{1}^\top & n(n-1)\end{array} \right],\\
& & \qquad \qquad \qquad \mbox{(since rows and columns of $\hat{L}_i$ sum to 0)} \\
& = & \left[\begin{array}{c|c} \hat{L}_2\hat{L}_1 + \hat{L}_1 + \hat{L}_2 + I + \mathbbm{1}\mathbbm{1}^\top & -n\mathbbm{1}\\\hline -n\mathbbm{1}^\top & n(n-1)\end{array} \right]\\
& & \qquad \qquad \qquad \mbox{(by the induction hypothesis)}\\
& = & L_2L_1.
\end{eqnarray*}
\item[\emph{Case 2:}] $b_n=c_n=0$\\
Note that 
\[L_1 = \left[\begin{array}{c|c} \hat{L}_1 & \mathbf{0} \\ \hline \mathbf{0}^\top & 0 \end{array}\right] \quad \mbox{and} \quad L_2 = \left[\begin{array}{c|c} \hat{L}_2 & \mathbf{0} \\ \hline \mathbf{0}^\top & 0 \end{array}\right].\]
Clearly,
\[L_1L_2 = \left[\begin{array}{c|c} \hat{L}_1\hat{L}_2 & \mathbf{0} \\ \hline \mathbf{0}^\top & 0 \end{array}\right] = \left[\begin{array}{c|c} \hat{L}_2\hat{L}_1 & \mathbf{0} \\ \hline \mathbf{0}^\top & 0 \end{array}\right] = L_2L_1,\]
by the induction hypothesis.

\item[\emph{Case 3:}] $b_n\neq c_n$\\
Without loss of generality, suppose $b_n=1$ and $c_n=0$. Then 
\[L_1 = \left[\begin{array}{c|c} \hat{L}_1 + I & -\mathbbm{1}\\\hline -\mathbbm{1}^\top & n-1\end{array} \right] \quad \mbox{and} \quad L_2 = \left[\begin{array}{c|c} \hat{L}_2 & \mathbf{0} \\ \hline \mathbf{0}^\top & 0 \end{array}\right].\]
Hence 
\begin{align*}
L_1L_2  = &~ \left[\begin{array}{c|c} \hat{L}_1 + I & -\mathbbm{1}\\\hline -\mathbbm{1}^\top & n-1\end{array} \right]\left[\begin{array}{c|c} \hat{L}_2 & \mathbf{0} \\ \hline \mathbf{0}^\top & 0 \end{array}\right] \\
 = &~ \left[\begin{array}{c|c} \hat{L}_1\hat{L}_2 + \hat{L}_2 & -\mathbf{0}\\\hline -\mathbf{0}^\top & 0 \end{array} \right] \\
 = &~ \left[\begin{array}{c|c} \hat{L}_2\hat{L}_1 + \hat{L}_2 & -\mathbf{0}\\\hline -\mathbf{0}^\top & 0 \end{array} \right]\\
 = &~~ L_2L_1.\qedhere
\end{align*}
\end{itemize}
\end{proof}

\begin{remark}
    In general, if written with respect to arbitrary orderings of the vertices, Laplacian matrices of threshold graphs do not necessarily commute. For instance, one can verify that the following matrices, both Laplacians of the path on three vertices, do not commute:
    \begin{align*}
         \begin{bmatrix}
            1 & 0 & -1\\0 & 1 & -1\\-1 & -1 & 2
        \end{bmatrix}\quad \text{and} \quad\begin{bmatrix}
            1 & -1 & 0\\-1 & 2 & -1\\0 & -1 & 1
        \end{bmatrix}.
    \end{align*}
    While the former is written with respect to the construction code $001$, the latter is not.
\end{remark}

Now we consider a threshold graph which has simple eigenvalues and find the corresponding eigenvectors, thus producing the unitary matrix $U$ which diagonalizes all other Laplacian matrices of threshold graphs written in the appropriate order. This threshold graph is the one with construction code $0101\cdots 01$ (for $n$ even) or $00101\cdots 01$ (for $n$ odd).

\begin{figure}
\begin{center}
\begin{tikzpicture}[every node/.style = {inner sep = 2pt}, scale =0.8]
\foreach \x in {1,...,10} \node (\x) at ({2*cos(18+36*(\x-1))}, {2*sin(18+36*(\x-1))}) {};
\draw[thick] (2)--(1);
\foreach \x in {1, 2, 3} \draw[thick] (4) -- (\x);
\foreach \x in {1, 2, 3, 4, 5} \draw[thick] (6) -- (\x);
\foreach \x in {1, 2, 3, 4, 5, 6, 7} \draw[thick] (8) -- (\x);
\foreach \x in {1, 2, 3, 4, 5, 6, 7, 8, 9} \draw[thick] (10) -- (\x);

\foreach \x in {1, ..., 10} \draw[black, fill=white] (\x) circle(5pt);
\end{tikzpicture}
\end{center}
\caption{The threshold graph on $n=10$ vertices with construction code $01010101$.}
\end{figure}
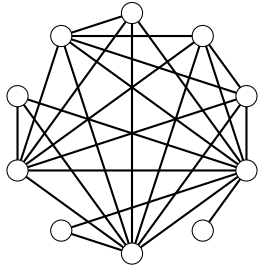

% \begin{proposition}\label{prop:01codesimple}
% Let $G$ be a threshold graph of order $n$ with construction code $0101\cdots01$ in the case that $n$ is even, and $00101\cdots01$ in the case that $n$ is odd. Let $L$ be the Laplacian matrix of $G$. Then $L$ has simple eigenvalues, and the unitary matrix which diagonalizes $L$ is unique, up to permutation similarity.
% \end{proposition}
% \begin{proof}
% This follows from the Spectral Theorem, and the fact that the eigenvalues of $L$ are simple (see \cite{kelmanshammer}). \Jane{We can provide more details here - leaving it for now.}
% \end{proof}

\begin{proposition}\label{prop:U} Let $n\geq 2$, and let $U$ be an upper Hessenberg unitary $n\times n$ matrix defined entrywise as follows:
	% \begin{equation} \label{eq:U}
		% U_{ij} = 
		% \begin{cases}
			% 0 & \textrm{if $i + j \ge n + 3$}    \\
			% \frac{1}{\sqrt{n}} & \textrm{if $j = 1$}  \\
			% \frac{1}{\sqrt{(n+1-j)(n+2-j)}} & \textrm{if $i + j \le n + 1$}  \\
			% -\sqrt{\frac{n+1-j}{n+2-j}} & \textrm{if $i + j = n + 2$}  \\
			% \end{cases}
		% \end{equation}
	\begin{equation}\label{eq:U}
		U_{i,j} = \left\{\begin{array}{ll} \frac{-j}{\sqrt{j(j+1)}}, & \mbox{if } i = j+1;\\
			\frac{1}{\sqrt{j(j+1)}}, & \mbox{if } i\leq j < n;\\
			\frac{1}{\sqrt{n}}, & \mbox{if } j=n;\\
			0, & \mbox{if } i\geq j+2;\end{array}\right.
	\end{equation}
	Then $U$ diagonalizes the Laplacian matrix of any threshold graph of order $n$ written with respect to its construction code, and it is unique up to permutation of columns and signs of columns.
	\end{proposition}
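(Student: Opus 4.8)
The plan is to reduce everything to the common-diagonalizability machinery already in place. By Lemma~\ref{lemma_commuting_laplacians} the Laplacians of all threshold graphs of order $n$, written with respect to their construction codes, pairwise commute and are real symmetric, hence normal, so Theorem~\ref{thm:universal and unique} and the consequence noted after it apply. It therefore suffices to establish three facts: (i) $U$ is unitary; (ii) the columns of $U$ are eigenvectors of the Laplacian of \emph{every} threshold graph of order $n$; and (iii) at least one threshold graph in the family --- the alternating one --- has \emph{simple} Laplacian spectrum. Given (i)--(iii), the cited consequence yields both that $U$ diagonalizes every threshold Laplacian and that it is unique up to permutation and signs of columns, since an orthonormal eigenbasis of a matrix with distinct eigenvalues is determined up to exactly those ambiguities.

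For (i) I would check directly that the columns of $U$ are orthonormal. Writing $u_j$ for column $j$, the column $u_n$ equals $\tfrac{1}{\sqrt n}\mathbbm{1}$, while for $j<n$ the column $u_j$ is the normalization of the vector carrying a $1$ in each of the first $j$ coordinates, a $-j$ in coordinate $j+1$, and $0$ thereafter. Each such vector has squared norm $j+j^2=j(j+1)$, matching the normalizing constant, and two distinct columns are orthogonal because the positive overlap of their supports cancels against the single negative entry; this is the standard Helmert-matrix computation.

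The heart of the proof is (ii). I would first record the adjacency rule for threshold graphs: for $i<j$ the edge $\{v_i,v_j\}$ is created exactly at step $j$, so $v_i\sim v_j$ if and only if $c_j=1$. Consequently, for any vertex $k\le j$, its neighbours in the tail $\{j+1,\dots,n\}$ are precisely the $v_m$ with $c_m=1$, a set independent of $k$; writing $P_j=\lvert\{m>j:c_m=1\}\rvert$, this gives the key identity $d_k-\lvert N(k)\cap\{1,\dots,j\}\rvert=P_j$ for all $k\le j$. I would then compute $Lu_j$ coordinatewise in the three ranges $k\le j$, $k=j+1$, and $k\ge j+2$, using this identity to see that the first range is constant (equal to $P_j+j\,[c_{j+1}=1]$), that the middle coordinate is consistent with the same value, and that the last range vanishes because each such $v_k$ is joined either to all of $\{1,\dots,j+1\}$ or to none of it. The upshot is that $u_j$ is an eigenvector with eigenvalue $\lambda_j=P_j+j\,[c_{j+1}=1]$, while $u_n$ is the all-ones eigenvector with eigenvalue $0$. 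I expect this coordinatewise verification --- in particular recognizing the $k$-independence of $d_k-\lvert N(k)\cap\{1,\dots,j\}\rvert$ --- to be the main obstacle, as it is exactly what forces a single basis to work simultaneously for \emph{all} codes rather than one graph at a time.

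Finally, for (iii) I would specialize $\lambda_j=P_j+j\,[c_{j+1}=1]$ to the alternating code. A direct count of the ones beyond position $j$ shows that for $n$ even the spectrum of the alternating graph is $\{0\}\cup\{1,\dots,\tfrac n2-1\}\cup\{\tfrac n2+1,\dots,n\}$, which consists of $n$ distinct values (with $\tfrac n2$ conspicuously absent), and an analogous computation handles the $00101\cdots$ code for $n$ odd. This supplies the required simple-spectrum member of the commuting family, completing the hypotheses of the cited consequence and hence the proof. As a byproduct, step (ii) already shows that $U$ diagonalizes every threshold Laplacian without appeal to the commuting lemma, which is then needed only to secure the uniqueness statement.
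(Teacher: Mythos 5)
Your proposal is correct, but it takes a genuinely different route from the paper at the key step. The paper verifies that the columns of $U$ are eigenvectors of the Laplacian of \emph{one} specific graph --- the alternating-code graph, whose spectrum $\{0,1,\dots,\lceil n/2\rceil-1,\lceil n/2\rceil+1,\dots,n\}$ is simple (cited from Hammer--Kelmans) --- and then transfers the conclusion to every other threshold graph via Lemma~\ref{lemma_commuting_laplacians} and Theorem~\ref{thm:universal and unique}. You instead prove directly that each column $u_j$ is an eigenvector of the Laplacian of \emph{every} threshold graph, with eigenvalue $\theta_j+j\,c_{j+1}$, using the adjacency rule $v_i\sim v_j \iff c_j=1$ (for $i<j$) and the $k$-independence of $d_k-\lvert N(k)\cap\{1,\dots,j\}\rvert$ for $k\le j$; I checked the three coordinate ranges and the computation goes through (the middle coordinate reduces to the identity $d_{j+1}+c_{j+1}=\theta_j+jc_{j+1}$, which follows from $d_{j+1}=jc_{j+1}+\theta_{j+1}$ and $\theta_{j+1}=\theta_j-c_{j+1}$; you should spell this out). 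Your approach costs a slightly longer coordinatewise verification but buys two things: it makes Lemma~\ref{lemma_commuting_laplacians} dispensable for the diagonalization claim (indeed, once one member of the family has simple spectrum, uniqueness up to permutation and signs also follows without the commuting lemma, so your closing remark understates how much you have bypassed it), and it delivers the eigenvalue formula $\lambda_j=\theta_j+jc_{j+1}$ as a byproduct --- the paper proves this separately as Lemma~\ref{eq_lambda}. Your simple-spectrum computation for the alternating code also recovers, rather than cites, the Hammer--Kelmans spectrum.
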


\begin{proof}
	Consider the Laplacian matrix of the threshold graph with construction code $0101\cdots01$ in the case that $n$ is even, and $00101\cdots01$ in the case that $n$ is odd. By Theorem 5.3 in \cite{hammer1996laplacian}, the eigenvalues of $L$ are $0, 1,\dots,\lceil\frac{n}{2}\rceil-1,\lceil\frac{n}{2}\rceil+1,\dots,n$. Hence, all eigenvalues of $L$ are simple. From Theorem~\ref{thm:universal and unique} and Lemma~\ref{lemma_commuting_laplacians}, we only need to show that the columns of $U$ consists of normalized eigenvectors. 
	
	Let $n$ be even. Then the Laplacian matrix $L$ is given as follows:
	\[
	L = 
	\begin{bmatrix} 
		k & -1 & 0 & -1 & 0 & \cdots & -1 & 0 & -1 \\
		-1 & k & 0 & -1 & 0 & \cdots & -1 & 0 & -1 \\
		0 & 0 & k-1 & -1 & 0 & \cdots & -1 & 0 & -1 \\
		-1 & -1 & -1 & k+1 & 0 & \cdots & -1 & 0 & -1 \\
		0 & 0 & 0 & 0 & k-2 & \cdots & -1 & 0 & -1 \\
		\vdots & \vdots & \vdots & \vdots & \vdots & \ddots & \vdots & \vdots & \vdots \\
		-1 & -1 & -1 & -1 & -1 & \cdots & 2k-2 & 0 & -1 \\
		0 & 0 & 0 & 0 & 0 & \cdots & 0 & 1 & -1 \\
		-1 & -1 & -1 & -1 & -1 & \cdots & -1 & -1 & 2k-1 \\
	\end{bmatrix} 
	.\]
	One can verify that the following is an orthogonal set of eigenvectors of $L$:
\[\bv_1 = \begin{bmatrix} 1 \\ -1 \\ 0 \\ 0 \\ \vdots \\ 0 \end{bmatrix}, \bv_2 = \begin{bmatrix} 1 \\ 1 \\ -2 \\ 0 \\ \vdots \\ 0 \end{bmatrix}, \bv_3 = \begin{bmatrix} 1 \\ 1 \\ 1 \\ - 3\\ \vdots \\ 0 \end{bmatrix}, \ldots,  \bv_{n-1} = \begin{bmatrix} 1 \\ 1 \\ 1 \\ 1\\ \vdots \\ -(n-1) \end{bmatrix}, \bv_n = \begin{bmatrix} 1 \\ 1 \\ 1 \\ 1\\ \vdots \\ 1 \end{bmatrix}.\]
In particular, for $1\leq \lambda \leq k-1$, the corresponding eigenvector is 
\[\bv^{(\lambda)} = \begin{bmatrix} 1 & \cdots & 1 & -(n-2\lambda) & 0 & \cdots & 0 \end{bmatrix}^\top, \]
where there are $2\lambda-1$ zeros. For $k+1 \leq \lambda \leq 2k$, the corresponding eigenvector is 
\[\bv^{(\lambda)} = \begin{bmatrix} 1 & \cdots & 1 & (n-2\lambda+1) & 0 & \cdots & 0 \end{bmatrix}^\top,\]
where there are $2n-2\lambda$ zeros. 
	
	Similarly, it can be checked that the same vectors are also eigenvectors of the Laplacian matrix for the code $00101\cdots01$ when $n$ is odd. Therefore, normalizing each vector, our desired result is obtained. 
\end{proof}

\subsection{Finding $\mathcal{K}(G)$ using the unitary matrix $U$}
\label{subsec:K-expU}

In this section, we use the results obtained so far to compute Kemeny's constant for a connected threshold graph by leveraging the expression given in Proposition~\ref{prop:L-exp}. In particular, we note that we can compute $L^\dagger$ more easily via its diagonalized form. We have $L=U\Lambda U^\top$, where $U$ is given by \eqref{eq:U} and $\Lambda$ is a diagonal matrix consisting of the eigenvalues of $L$ in some order dictated by the order of the columns in $U$. Since $L$ has exactly one zero eigenvalue corresponding to the last column of $U$, note then that $L^\dagger = U\Lambda^\dagger U^\top$, where $\Lambda^\dagger = \diag(\frac{1}{\lambda_1}, \frac{1}{\lambda_2}, \ldots, \frac{1}{\lambda_{n-1}}, 0).$

\begin{proposition}\label{prop:KG_termsofU}
Let $G$ be a threshold graph with $n$ vertices and $m$ edges, and let $d_1, d_2, \ldots, d_n$ be the degrees of the vertices. Let $\lambda_1, \lambda_2, \ldots, \lambda_{n-1}, \lambda_n=0$ be the eigenvalues of the Laplacian matrix of $G$, indexed according to the order of the columns in the unitary matrix $U$ in \eqref{eq:U}. Then 
\[\mathcal{K}(G) = \frac{1}{2m}\sum_{i=1}^{n-1} \frac{1}{\lambda_i} \sum_{j<k} d_jd_k (U_{j,i} - U_{k, i})^2.\]
\end{proposition}

\begin{proof}
From Proposition~\ref{prop:L-exp}, we have
\[\mathcal{K}(G) = \mathbf{q}^\top \mathbf{d} - \frac{\mathbf{d}^\top L^\dagger \mathbf{d}}{2m},\]
where $\mathbf{d} = \begin{bmatrix} d_1 & d_2 & \cdots & d_n \end{bmatrix}$, and $\mathbf{q}$ is the vector of diagonal entries of $L^\dagger$.
We have 
\begin{eqnarray*}
\ell^\dagger_{k,k} = \be_k^\top U \Lambda^\dagger U^\top \be_k = \sum_{i=1}^{n-1} \frac{1}{\lambda_i} U_{k,i}^2.
\end{eqnarray*}
Hence 
\[\mathbf{q}^\top \mathbf{d} = \sum_{i=1}^{n-1} \frac{1}{\lambda}_i \sum_{j=1}^n d_j U_{j,i}^2.\]
Furthermore,
\begin{eqnarray*}
\mathbf{d}^\top L^\dagger \mathbf{d} = \mathbf{d}^\top U \Lambda^\dagger U^\top \mathbf{d} =  (U^\top \mathbf{d})^\top \Lambda^\dagger (U^\top \mathbf{d}) = \sum_{i=1}^{n-1} \frac{1}{\lambda_i} (\sum_{j=1}^n d_j U_{j,i})^2.  
\end{eqnarray*}
Combining these, we have 
\begin{equation}\label{eq:KG_use}
\mathcal{K}(G) = \sum_{i=1}^{n-1} \frac{1}{\lambda_i}\left(\sum_{j=1}^n d_jU_{j,i}^2 - \frac{(\sum_{j=1}^n d_jU_{j,i})^2}{2m}\right).\end{equation}
Noting that $2m=\sum_{k=1}^n d_k$ allows us to simplify as follows:
\begin{eqnarray*}
\mathcal{K}(G) & = & \frac{1}{2m}\sum_{i=1}^{n-1} \frac{1}{\lambda_i}\left[(\sum_{k=1}^n d_k)(\sum_{j=1}^n d_j U_{j,i}^2) - (\sum_{j=1}^n d_jU_{j,i})^2\right] \\
 & = & \frac{1}{2m}\sum_{i=1}^{n-1}\frac{1}{\lambda_i}\left[\sum_{j=1}^n d_j^2U_{j,i}^2 + \sum_{j=1}^n\sum_{\substack{k=1\\k\neq j}}^n d_jd_kU_{j,i}^2 - \sum_{j=1}^n d_j^2U_{j,i}^2 - \sum_{j=1}^n\sum_{\substack{k=1\\k\neq j}}^n d_jd_kU_{j,i}U_{k,i}\right].
\end{eqnarray*}
This may be simplified to obtain
\begin{equation*} 
\mathcal{K}(G) = \frac{1}{2m}\sum_{i=1}^{n-1} \frac{1}{\lambda_i}\sum_{j< k} d_jd_k(U_{j,i}-U_{k,i})^2.\qedhere
\end{equation*}
\end{proof}

The expression for $\mathcal{K}(G)$ may be improved by computing each $\lambda_i$ explicitly from the code. We note again that the order of the eigenvalues of the Laplacian matrix in this expression is determined by the order of the columns in $U$, given in Proposition~\ref{prop:U}. Thus $\lambda_{n}=0$ for every threshold graph of order $n$; to determine $\lambda_i$, we write the Laplacian matrix with respect to the vertex ordering given by the construction code of the graph, and compute $\mathbf{u}_i^\top L\mathbf{u}_i$, where $\mathbf{u}_i$ is the $i^{th}$ column of $U$ in \eqref{eq:U}. We will denote by $\theta_i$ the number of $1$'s in the code after position $i$, and by $c_i$ the binary indicator of the code at position $i$ ($c_i=1$ if it corresponds to a $1$ and $c_i=0$ if it corresponds to a $0$).

\begin{remark}
We note that the eigenvalues of the Laplacian matrix of a threshold graph are known; see \cite{hammer1996laplacian}. The purpose of the next result is to facilitate the application of the result from Wang et al (see \cite{wang2017kemeny}), and in order to do so, we focus on finding the eigenpairs of the Laplacian matrix in order, with respect to the ordering of the eigenvectors given in Proposition~\ref{prop:U}.
\end{remark}

\begin{lemma} \label{eq_lambda}
Let $G$ be a threshold graph with construction code $c_1c_2\cdots c_n$. Let $L$ be the Laplacian matrix written with the vertices ordered according to the construction code. Then $U LU^\top = \Lambda$, where $U$ is given by \eqref{eq:U}, and the $i^{th}$ diagonal entry of $\Lambda$ is given by:
\begin{equation*} 
\lambda_{i} = \theta_i + ic_{i+1}.
\end{equation*}
\end{lemma}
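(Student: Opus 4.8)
The plan is to exploit Proposition~\ref{prop:U} directly: it guarantees that conjugating $L$ by $U$ produces a diagonal matrix, so the $i^{\text{th}}$ diagonal entry is simply the quadratic form $\lambda_i = \mathbf{u}_i^\top L \mathbf{u}_i$, where $\mathbf{u}_i$ is the $i^{\text{th}}$ column of $U$. No off-diagonal bookkeeping is needed. To evaluate this quadratic form I would invoke the standard Laplacian identity $\mathbf{x}^\top L \mathbf{x} = \sum_{\{j,k\}\in E(G)} (x_j - x_k)^2$, which turns the whole computation into a weighted count of the edges of $G$.

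Next I would record the explicit shape of $\mathbf{u}_i$ coming from \eqref{eq:U}. Writing $a = 1/\sqrt{i(i+1)}$, the column $\mathbf{u}_i$ equals $a$ on positions $1,\dots,i$, equals $-ia$ on position $i+1$, and equals $0$ on positions $i+2,\dots,n$. This partitions the vertices into three blocks, $A = \{1,\dots,i\}$, $B = \{i+1\}$, and $C = \{i+2,\dots,n\}$, and only edges joining two distinct blocks contribute. A short computation of squared differences shows that each $A$--$B$ edge contributes $(a+ia)^2 = (i+1)/i$, each $A$--$C$ edge contributes $a^2 = 1/(i(i+1))$, and each $B$--$C$ edge contributes $(ia)^2 = i/(i+1)$.

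The combinatorial core is to count these crossing edges using the defining adjacency rule of threshold graphs: for $j < k$, the vertices $v_j$ and $v_k$ are adjacent precisely when $c_k = 1$, since an edge between an earlier and a later vertex is created exactly when the later vertex is the dominating one. This rule gives all three counts at once. Vertex $i+1$ is joined to all $i$ members of $A$ iff $c_{i+1}=1$, producing $i\,c_{i+1}$ edges of type $A$--$B$; and the vertices of $C$ that send edges backward are exactly the dominating ones, of which there are $\theta_{i+1}$ after position $i+1$, each joined to every earlier vertex and hence to all of $A$ and to $B$. This yields $i\,\theta_{i+1}$ edges of type $A$--$C$ and $\theta_{i+1}$ edges of type $B$--$C$. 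Substituting the weights, $\lambda_i = i c_{i+1}\cdot \frac{i+1}{i} + i\theta_{i+1}\cdot \frac{1}{i(i+1)} + \theta_{i+1}\cdot \frac{i}{i+1} = (i+1)c_{i+1} + \theta_{i+1}$, and the index identity $\theta_i = \theta_{i+1} + c_{i+1}$ collapses this to $\lambda_i = \theta_i + i c_{i+1}$, as claimed. The final column $\mathbf{u}_n = \frac{1}{\sqrt n}\mathbbm{1}$ would be handled separately, where $L\mathbbm{1} = 0$ forces $\lambda_n = 0$.

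The step I expect to be the main obstacle is the edge bookkeeping rather than any single algebraic manipulation: one must carefully justify that every dominating vertex sitting in block $C$ contributes a full bundle of $i$ edges into $A$ together with exactly one edge into $B$, and keep the shift between $\theta_i$ and $\theta_{i+1}$ straight so that the two $\theta_{i+1}$-weighted terms combine cleanly before converting back to $\theta_i$.
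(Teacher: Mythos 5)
Your proposal is correct, and it reaches the same Rayleigh-quotient quantity $\mathbf{u}_i^\top L\mathbf{u}_i$ as the paper, but by a genuinely different computation. The paper partitions $L$ conformally with the vector $\bv_i = \bigl[\,\mathbbm{1}^\top \mid -i\be_1^\top\,\bigr]^\top$ into blocks $\left[\begin{smallmatrix}\hat{L}+\theta_i I & L_{12}\\ L_{12}^\top & L_{22}\end{smallmatrix}\right]$, where $\hat{L}$ is the Laplacian of the threshold graph on the first $i$ vertices, and then grinds out the quadratic form block by block, using $\mathbbm{1}^\top\hat{L}\mathbbm{1}=0$, the case analysis on $L_{12}\be_1$, and the degree of the $(i+1)^{\text{th}}$ vertex. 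You instead invoke the identity $\mathbf{x}^\top L\mathbf{x}=\sum_{\{j,k\}\in E(G)}(x_j-x_k)^2$ and reduce everything to counting the three types of crossing edges between the blocks $\{1,\dots,i\}$, $\{i+1\}$, $\{i+2,\dots,n\}$ via the threshold adjacency rule ($v_j\sim v_k$ for $j<k$ iff $c_k=1$); your weights $\frac{i+1}{i}$, $\frac{1}{i(i+1)}$, $\frac{i}{i+1}$ and counts $ic_{i+1}$, $i\theta_{i+1}$, $\theta_{i+1}$ all check out, and the final collapse via $\theta_i=\theta_{i+1}+c_{i+1}$ is exactly right. Your route buys a cleaner, more combinatorial argument that makes the dependence on the construction code transparent and sidesteps the block bookkeeping (in particular the observation that the top-left block is $\hat{L}+\theta_i I$); the paper's route keeps the inductive block structure of threshold Laplacians in view, which is the same decomposition it uses in Lemma~\ref{lemma_commuting_laplacians}. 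Both arguments rely identically on Proposition~\ref{prop:U} to guarantee that the conjugated matrix is diagonal, so that only the diagonal entries need to be computed, and both handle $\lambda_n=0$ trivially via $L\mathbbm{1}=0$.
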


\begin{proof}
The $i^{th}$ eigenvector according to the ordering in Proposition~\ref{prop:U} is 
\[\mathbf{v}_i = \begin{bmatrix} \mathbbm{1} \\\hline -i\mathbf{e}_1\end{bmatrix},\]
where $\mathbf{e}_1$ is the first standard basis vector in $\mathbb{R}^{n-i}$. Note that $\lambda_i = \frac{\bv_i^\top L \bv_i}{\|\bv_i\|^2}$, and 
\[\|\bv_i\|^2 = i + i^2 = i(i+1).\]

Partitioning $L$ conformally with $\bv_i$, we obtain
\[\bv_i^\top L \bv_i = \left[\begin{array}{c|c} \mathbbm{1}^\top & -i\be_1^\top \end{array}\right]\left[\begin{array}{c|c} \hat{L} + \theta_i I & L_{12} \\\hline L_{12}^\top & L_{22}\end{array}\right]\begin{bmatrix} \mathbbm{1} \\\hline -i\mathbf{e}_1\end{bmatrix}, \]
where $\hat{L}$ is the Laplacian matrix of the threshold graph with construction code $c_1c_2\cdots c_i$.
Multiplying, we get
\[
\bv_i^\top L \bv_i  =  \mathbbm{1}^\top \hat{L}\mathbbm{1} + \theta_i\mathbbm{1}^\top\mathbbm{1} - i\mathbbm{1}^\top L_{12}\be_1 - i\be_1^\top L_{12}^\top\mathbbm{1} + i^2\be_1^\top L_{22}\be_1.
\]
Observe the following: 
\begin{itemize}
\item $\mathbbm{1}^\top\hat{L}\mathbbm{1} = 0$
\item $L_{12}\be_1$ is dependent on the value of the construction code at the corresponding vertex; in particular, this is $-\mathbbm{1}$ if $c_{i+1}=1$, and $\mathbf{0}$ if $c_{i+1} = 0$.
\item $\be_1^\top L_{22} \be_1$ is the degree of the $(i+1)^{th}$ vertex in the construction code, and is $\theta_{i+1} + i$ if $c_{i+1}=1$, and $\theta_i$ if $c_{i+1} = 0$.
\item Finally, if $c_{i+1} = 1$, then $\theta_{i+1} = \theta_{i} - 1$, and if $c_{i+1} = 0$, then $\theta_{i+1}=\theta_{i}.$
\end{itemize}
Hence if $c_{i+1} = 1,$ we have
\begin{eqnarray*}
\bv_i^\top L \bv_i = i\theta_{i} + 2i^2 + i^2\theta_{i+1} + i^3 = \theta_{i}[i(i+1)] + i^2(i+1),
\end{eqnarray*}
and if $c_{n+2-i} = 0$, we have
\begin{eqnarray*}
\bv_i^\top L \bv_i = i\theta_i + i^2\theta_{i+1} =  i\theta_i + i^2\theta_{i} = \theta_{i}[i(i+1)].
\end{eqnarray*}
Hence 
\[\lambda_i = \frac{\bv_i^\top L \bv_i}{\|\bv_i\|^2} = \left\{\begin{array}{lc} \theta_i + i, & \mbox{if } c_{i+1} = 1,\\
\theta_{i}, & \mbox{if } c_{i+1} = 0.\end{array}\right.\qedhere\]
\end{proof}

Our goal is now to express Kemeny's constant entirely in terms of the construction code for the threshold graph, using the expression in Proposition~\ref{prop:KG_termsofU} and Lemma~\ref{eq_lambda}.

\begin{theorem} \label{nicer_kemenys_in_terms_of_code}
Let $G$ be a connected threshold graph with $n$ vertices and $m$ edges, and let $\mathbf{c}^\top = \begin{bmatrix} c_1 & c_2 & \cdots & c_n\end{bmatrix}$ be the construction code of $G$, written as a vector in $\mathbb{R}^n$. Define \begin{eqnarray}\label{eq:vectors}
\mathbf{w}_k^\top & = & \begin{bmatrix} 0 & 2 & 4 & \cdots & 2(k-1) & -k(k-1)& 0 & \cdots & 0 \end{bmatrix} \\
\mathbf{z}_k^\top & =  & \begin{bmatrix} 0 & 0 & 0 & \cdots & 0 & (k+1) & 1 & \cdots & 1 \end{bmatrix}, \notag
\end{eqnarray}
where the $k+1$ appears in the $(k+1)$ position of $\bz_k$.
Then we have
\begin{equation}\label{expression:kem} 
\mathcal{K}(G) = n-1 -\sum_{i=1}^{n-1}\frac{c_{i+1}}{\bz_i^\top \bc} + \sum_{i=1}^{n-1} \frac{(\bw_i^\top\bc)(2m-\bw_i^\top\bc)}{2m\cdot i(i+1) \cdot \bz_i^\top\bc}.
\end{equation}
\end{theorem}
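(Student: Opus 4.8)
The plan is to start from the eigenvalue/eigenvector expression already prepared for exactly this purpose, namely \eqref{eq:KG_use},
\[
\mathcal{K}(G) = \sum_{i=1}^{n-1} \frac{1}{\lambda_i}\left(\sum_{j=1}^n d_jU_{j,i}^2 - \frac{\bigl(\sum_{j=1}^n d_jU_{j,i}\bigr)^2}{2m}\right),
\]
and to rewrite each of the three code-independent ingredients ($\lambda_i$, the linear sum $\sum_j d_jU_{j,i}$, and the quadratic sum $\sum_j d_jU_{j,i}^2$) in terms of $\mathbf{c}$. The crucial structural fact I would exploit is that the $i$th column of $U$ in \eqref{eq:U} is supported only on rows $1,\dots,i+1$: it equals $\tfrac{1}{\sqrt{i(i+1)}}$ on rows $1,\dots,i$ and $\tfrac{-i}{\sqrt{i(i+1)}}$ on row $i+1$. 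Consequently $\sum_j d_jU_{j,i} = \tfrac{1}{\sqrt{i(i+1)}}\bigl(\sum_{j=1}^i d_j - i\,d_{i+1}\bigr)$ and $\sum_j d_jU_{j,i}^2 = \tfrac{1}{i(i+1)}\bigl(\sum_{j=1}^i d_j + i^2 d_{i+1}\bigr)$, so everything reduces to understanding partial sums of degrees truncated at position $i+1$.

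First I would record the degree formula for a threshold graph in terms of the code. Writing $\theta_j$ for the number of $1$'s strictly after position $j$ (as in Lemma~\ref{eq_lambda}), a dominating vertex $v_j$ is adjacent to the $j-1$ earlier vertices and to every later dominating vertex, while an isolated-at-birth vertex is adjacent only to later dominating vertices; hence $d_j = \theta_j + (j-1)c_j$. I would then immediately identify the denominators: since $\bz_i^\top\bc = (i+1)c_{i+1} + \sum_{\ell>i+1}c_\ell = (i+1)c_{i+1} + \theta_{i+1}$ and $\theta_{i+1}=\theta_i-c_{i+1}$, we get $\bz_i^\top\bc = \theta_i + i\,c_{i+1}$, which by Lemma~\ref{eq_lambda} is exactly $\lambda_i$. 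I would also note the one-line relation $d_{i+1} = \theta_{i+1}+i\,c_{i+1} = \lambda_i - c_{i+1}$, which will be what ultimately produces the constant term $n-1$.

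The main obstacle is the combinatorial identity $\sum_{j=1}^{i} d_j - i\,d_{i+1} = \bw_i^\top\bc$. Here I would substitute $d_j=\theta_j+(j-1)c_j$ and compute $\sum_{j=1}^i\theta_j$ by swapping the order of summation: $\sum_{j=1}^i\theta_j = \sum_{\ell}c_\ell\,\bigl|\{j\le i: j<\ell\}\bigr| = \sum_{\ell=2}^{i+1}(\ell-1)c_\ell + i\,\theta_{i+1}$, the split coming from $\min(i,\ell-1)$. Feeding this in, the $\theta_{i+1}$ contributions cancel against $-i\,d_{i+1}$, and the remaining terms collapse to $2\sum_{j=1}^i(j-1)c_j - i(i-1)c_{i+1}$, which is precisely $\bw_i^\top\bc$ read off the definition \eqref{eq:vectors}. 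This gives $\sum_j d_jU_{j,i} = \bw_i^\top\bc/\sqrt{i(i+1)}$ and therefore $\bigl(\sum_j d_jU_{j,i}\bigr)^2/(2m) = (\bw_i^\top\bc)^2/\bigl(2m\,i(i+1)\bigr)$, which is the second sum in the target formula once divided by $\lambda_i=\bz_i^\top\bc$.

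Finally I would assemble the quadratic term. Using $\sum_{j=1}^i d_j = \bw_i^\top\bc + i\,d_{i+1}$ from the previous step, one finds $\sum_{j=1}^i d_j + i^2 d_{i+1} = \bw_i^\top\bc + i(i+1)d_{i+1}$, so
\[
\frac{1}{\lambda_i}\sum_{j=1}^n d_jU_{j,i}^2 = \frac{d_{i+1}}{\lambda_i} + \frac{\bw_i^\top\bc}{i(i+1)\lambda_i} = 1 - \frac{c_{i+1}}{\lambda_i} + \frac{\bw_i^\top\bc}{i(i+1)\lambda_i},
\]
where the last equality is the relation $d_{i+1}=\lambda_i-c_{i+1}$. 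Summing over $i=1,\dots,n-1$ turns the leading $1$ into $n-1$ and produces $-\sum_i c_{i+1}/\bz_i^\top\bc$; combining the remaining cross term $\bw_i^\top\bc/\bigl(i(i+1)\lambda_i\bigr)$ with the subtracted quadratic term $(\bw_i^\top\bc)^2/\bigl(2m\,i(i+1)\lambda_i\bigr)$ from \eqref{eq:KG_use} yields the single fraction $(\bw_i^\top\bc)(2m-\bw_i^\top\bc)/\bigl(2m\,i(i+1)\,\bz_i^\top\bc\bigr)$, which is exactly \eqref{expression:kem}. I expect the degree-sum swap to be the only genuinely delicate point; the remainder is bookkeeping that hinges on the two clean substitutions $\bz_i^\top\bc=\lambda_i$ and $d_{i+1}=\lambda_i-c_{i+1}$.
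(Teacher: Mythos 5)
Your proposal is correct and follows essentially the same route as the paper's proof: both start from \eqref{eq:KG_use}, identify $\lambda_i=\bz_i^\top\bc$ and $d_j=(j-1)c_j+\theta_j$, reduce the column sums of $U$ to the partial degree sum $\sum_{j\le i}d_j$ and $d_{i+1}$, and verify $\sum_{j=1}^i d_j - i\,d_{i+1}=\bw_i^\top\bc$ before reassembling. The only differences are organizational (you work with $\bw_i$ directly and swap the order of summation in $\sum_j\theta_j$, whereas the paper routes the same computation through the auxiliary vector $\hat{\bw}_i$ and substitutes at the end), and your identity $d_{i+1}=\lambda_i-c_{i+1}$ is a slightly cleaner way to extract the constant $n-1$.
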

\begin{proof}
We express $\mathcal{K}(G)$ in terms of the code $c$ by finding appropriate expressions for each part of the expression in \eqref{eq:KG_use}, rewritten here:
\[\mathcal{K}(G) = \sum_{i=1}^{n-1} \frac{1}{\lambda_i}\left(\sum_{j=1}^n d_jU_{j,i}^2 - \frac{1}{2m}\left(\sum_{j=1}^n d_jU_{j,i}\right)^2\right).\]
We first note the following:
\begin{itemize}
\item Since $\theta_i$ represents the number of ones in $\bc$ after position $i$, substituting this for $\lambda_i = \theta_i + ic_{i+1}$ gives
\begin{equation}
\lambda_i = (i+1)c_{i+1} + c_{i+2} + c_{i+3} + \cdots + c_n.
\end{equation}
We write this as an inner product of $\bz_i$ with $\bc$; that is, $\lambda_i = \bz_i^\top\bc$.
\item The degree of the $i^{th}$ vertex may be computed as
\begin{equation} \label{eq_degree}
d_i = (i-1)c_i + \theta_i = (i-1)c_i + c_{i+1} + c_{i+2} + \cdots + c_n 
\end{equation}
which can be written as an inner product $(\bz_{i-1}-\be_i)^\top\bc$.
\item The partial sum $\sum_{j=1}^i d_j$ is given by
\begin{eqnarray*}
\sum_{j=1}^i d_j & = & \sum_{j=1}^i\left((j-1)c_j +\sum_{r=j+1}^n c_r\right) \\
& = & c_2 + 2c_3 + 3c_4 + \cdots + (i-1)c_i + (c_2 + c_3+\cdots+c_n) + (c_3+c_4 + \cdots + c_n) \\
& & \qquad \qquad + \cdots + (c_{i+1} + \cdots + c_n) \\
& = & 2c_2 + 4c_3 + 6c_4 + \cdots + 2(i-1)c_i + ic_{i+1} + ic_{i+2} + \cdots + ic_n \\
& = & 2c_2 + 4c_3 + 6c_4 + \cdots + 2(i-1)c_i + 2ic_{i+1} - ic_{i+1} + ic_{i+2} + \cdots + ic_n.
\end{eqnarray*}
Define $\hat{\bw}_k^\top = \begin{bmatrix} 0 & 2 & 4 & \cdots & 2k & 0 & \cdots & 0 \end{bmatrix}$.
Then we can write this as \[\hat{\bw}_i^\top \bc + i(\bz_i - (i+2)\be_{i+1})^\top\bc.\]

% \item The number of edges is given by
% \begin{equation} \label{eq_m}
% m = \frac{1}{2}\sum_{j=1}^n d_j = \frac12\left[\sum_{j=1}^n (j-1)c_j + \sum_{j=1}^n\sum_{r=i+1}^n c_r\right] = \frac12\left[\sum_{j=1}^n (j-1)c_j + \sum_{j=1}^n (j-1)c_j\right] = \sum_{j=1}^n (j-1)c_j
% \end{equation}
\end{itemize}
We now compute the remaining components of the expression in \eqref{eq:KG_use}. We have
\begin{eqnarray*}
\sum_{j=1}^n d_j U_{j,i}^2 & = & \frac{1}{i(i+1)}\left(\sum_{j=1}^i d_j + i^2 d_{i+1}\right) \\ 
& = & \frac{1}{i(i+1)}\left(\hat{\bw}_i^\top \bc + i(\bz_i - (i+2)\be_{i+1})^\top\bc + i^2(\bz_i - \be_{i+1})^\top \bc \right) \\
& = & \frac{1}{i(i+1)}\left(\hat{\bw}_i^\top \bc + i\bz_i^\top\bc - i(i+2)c_{i+1} + i^2\bz_i^\top\bc - i^2c_{i+1} \right)\\
& = & \frac{1}{i(i+1)}\hat{\bw}_i^\top\bc + \bz_i^\top\bc - 2c_{i+1}.
\end{eqnarray*}
We also have 
\begin{eqnarray*}
\left(\sum_{j=1}^n d_j U_{j,i}\right)^2 & = & \frac{1}{i(i+1)}\left(\sum_{j=1}^i d_j -id_{i+1}\right)^2\\
& = & \frac{1}{i(i+1)}\left(\hat{\bw}_{i}^\top\bc + i\bz_i^\top\bc -i(i+2)c_{i+1} -i(\bz_i^\top-\be_{i+1}^\top)\bc \right)^2\\
& = &\frac{1}{i(i+1)}\left(\hat{\bw}_i^\top\bc -i(i+1)c_{i+1}\right)^2.
\end{eqnarray*}
Hence 
\begin{eqnarray*}
\mathcal{K}(G) & = & \sum_{i=1}^{n-1}\frac{1}{\bz_i^\top\bc}\left(\frac{\hat{\bw}_i^\top\bc}{i(i+1)} + \bz_i^\top\bc -2c_{i+1} -\frac{(\hat{\bw}_i^\top\bc -i(i+1)c_{i+1})^2}{2m\cdot i(i+1)}\right)\\
& = & n-1 -\sum_{i=1}^{n-1} \frac{2c_{i+1}}{\bz_i^\top\bc} + \sum_{i=1}^{n-1} \frac{2m\hat{\bw}_i^\top\bc - (\hat{\bw}_i^\top\bc -i(i+1)c_{i+1})^2}{2m\cdot i(i+1) \cdot \bz_i^\top\bc}.
\end{eqnarray*}
By making the substitution $\bw_i := \hat{\bw}_i - i(i+1)\be_{i+1}$ in the last sum and simplifying, we obtain 
\[\mathcal{K}(G) = (n-1) -\sum_{i=1}^{n-1}\frac{c_{i+1}}{\bz_i^\top \bc} + \sum_{i=1}^{n-1} \frac{(\bw_i^\top\bc)(2m-\bw_i^\top\bc)}{2m\cdot i(i+1) \cdot \bz_i^\top\bc}.\qedhere\]
\end{proof}

Alternatively, we can note that $2m = \sum_{j=1}^n d_i$, and that $\lambda_{i}=\theta_i + ic_{i+1} = d_{i+1} + c_{i+1}$. Now, plugging these, in conjunction with the result in Proposition~$\ref{prop:U}$, into the formula in Proposition~$\ref{prop:KG_termsofU}$, we obtain the following result:

\begin{corollary}\label{eq_Kemenys_mostly_degree}
Let $G$ be a threshold graph with $n$ vertices. Then we have:
\begin{equation*} 
\mathcal{K}(G) = \sum_{j=2}^{n} \frac{1}{d_j+c_j}\cdot \frac{1}{j(j-1)}\left[\sum_{i=1}^{j-1} d_i + (j-1)^2 d_j - \frac{(\sum_{i=1}^{j-1} d_i - (j-1)d_j)^2}{\sum_{i=1}^n d_i}\right].
\end{equation*}
\end{corollary}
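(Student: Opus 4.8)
The plan is to bypass the construction-code vectors $\bw_i,\bz_i$ altogether and instead substitute the explicit entries of $U$ directly into the degree formulation of Kemeny's constant, keeping the eigenvalues in the compact form $\lambda_i = d_{i+1}+c_{i+1}$. The natural starting point is equation \eqref{eq:KG_use}, which arises in the proof of Proposition~\ref{prop:KG_termsofU}, namely
\[\mathcal{K}(G) = \sum_{i=1}^{n-1} \frac{1}{\lambda_i}\left(\sum_{j=1}^n d_jU_{j,i}^2 - \frac{1}{2m}\Big(\sum_{j=1}^n d_jU_{j,i}\Big)^2\right),\]
since it already isolates exactly the two degree-weighted sums that need to be evaluated.

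First I would read off column $i$ of $U$ from the piecewise definition \eqref{eq:U}: for $i<n$ it equals $\frac{1}{\sqrt{i(i+1)}}$ in positions $1,\dots,i$, equals $\frac{-i}{\sqrt{i(i+1)}}$ in position $i+1$, and vanishes in positions $i+2,\dots,n$. This immediately gives $\sum_{j=1}^n d_j U_{j,i} = \frac{1}{\sqrt{i(i+1)}}\big(\sum_{j=1}^i d_j - i\,d_{i+1}\big)$ and $\sum_{j=1}^n d_j U_{j,i}^2 = \frac{1}{i(i+1)}\big(\sum_{j=1}^i d_j + i^2 d_{i+1}\big)$, the first $i$ squared entries each contributing $\frac{1}{i(i+1)}$ and the $(i+1)$-st contributing $\frac{i^2}{i(i+1)}$. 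Substituting both into the display above and factoring out $\frac{1}{i(i+1)}$, while writing $2m=\sum_{i=1}^n d_i$, turns the summand into $\frac{1}{\lambda_i}\cdot\frac{1}{i(i+1)}$ times exactly the bracketed degree expression appearing in the statement.

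The remaining task is to replace $\lambda_i$ by $d_{i+1}+c_{i+1}$ and to reindex. For the eigenvalue identity I would combine Lemma~\ref{eq_lambda}, which gives $\lambda_i = \theta_i + i c_{i+1}$, with the degree formula \eqref{eq_degree}, which gives $d_{i+1} = i c_{i+1} + \theta_{i+1}$, together with the elementary relation $\theta_{i+1} = \theta_i - c_{i+1}$ (there is exactly one fewer trailing $1$ precisely when $c_{i+1}=1$). These yield $d_{i+1}+c_{i+1} = i c_{i+1} + \theta_{i+1} + c_{i+1} = i c_{i+1} + \theta_i = \lambda_i$ in a single line, with no case split. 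Finally, the substitution $j=i+1$ (so $i=1,\dots,n-1$ becomes $j=2,\dots,n$, with $i(i+1)=(j-1)j$, $i^2=(j-1)^2$, $d_{i+1}=d_j$, $c_{i+1}=c_j$, and $\sum_{j'=1}^{i} d_{j'} = \sum_{i'=1}^{j-1} d_{i'}$) produces the stated formula verbatim.

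I do not anticipate a genuine obstacle: the result is a direct evaluation rather than a structural theorem, and the preceding remark already sketches the substitution. The only point requiring care is clerical — correctly reading which positions of column $i$ carry each of the three values in \eqref{eq:U}, and keeping the $i\leftrightarrow i+1$ and $i\leftrightarrow j$ index shifts consistent between the eigenvalue label $\lambda_i$ and the vertex label $d_{i+1}=d_j$, so that the weight $\frac{1}{d_j+c_j}$ lands on the correct summand.
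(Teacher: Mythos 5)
Your proposal is correct and matches the paper's approach: the paper's own justification is precisely the one-line remark preceding the corollary, namely to substitute $\lambda_i=\theta_i+ic_{i+1}=d_{i+1}+c_{i+1}$, $2m=\sum_i d_i$, and the explicit entries of $U$ from Proposition~\ref{prop:U} into the formula of Proposition~\ref{prop:KG_termsofU}. Starting from the intermediate display \eqref{eq:KG_use} rather than the final pairwise form of that proposition is a sensible (and if anything cleaner) way to carry out the same substitution, and your index bookkeeping and the derivation of $d_{i+1}+c_{i+1}=\lambda_i$ are all correct.
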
 

% \begin{corollary}%\label{eq_Kemenys_mostly_degree}
% Let $G$ be a threshold graph with $n$ vertices. Then we have:
% \begin{equation*} 
% \mathcal{K}(G) = \sum_{i=1}^{n-1} \frac{1}{d_{i+1}+c_{i+1}}\cdot \frac{1}{i(i+1)}\left[\sum_{j=1}^{i} d_j + i^2 d_{i+1} - \frac{(\sum_{j=1}^{i} d_j - id_{i+1})^2}{\sum_{j=1}^n d_j}\right].
% \end{equation*}
% \end{corollary} 

% We can also substitute $\lambda_{n+2-i}=d_i+c_i$ into $\ref{eq_R_nicer_formula}$ and obtain the most direct formula for the resistance matrix:

% \begin{proposition} \label{eq_R_nicest}
% Let $G$ be a threshold graph with $n$ vertices, and let $j\leq k$. Then we have:
% \begin{equation*}
% \boxed{R_{j,k} =
% \frac{j - 1}{j}\cdot \frac{1}{d_j+c_j} + \frac{k}{k - 1}\cdot \frac{1}{d_k+c_k} + \sum_{i = j + 1}^{k-1} \frac{1}{i(i-1)} \cdot \frac{1}{d_i+c_i}, \text{for } j < k}
% \end{equation*}
% \end{proposition}

From \eqref{expression:kem}, we shall obtain upper bounds on Kemeny's constant of a threshold graph. 

\begin{proposition}
Let $G$ be a connected threshold graph with $n$ vertices, where $n\geq 3$. Then $$\mathcal{K}(G) < 2n - 3.$$    
\end{proposition}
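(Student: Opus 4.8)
The plan is to argue directly from the term-by-term form of Kemeny's constant established in Theorem~\ref{nicer_kemenys_in_terms_of_code}. Writing $\lambda_i = \bz_i^\top\bc = d_{i+1}+c_{i+1}$, $a_i = \bw_i^\top\bc = \sum_{j\le i}d_j - i\,d_{i+1}$ and $b_i = 2m-a_i = (i+1)d_{i+1}+\sum_{k\ge i+2}d_k$, the formula becomes $\mathcal{K}(G)=\sum_{i=1}^{n-1}\mathrm{term}_i$ with
\[\mathrm{term}_i = 1 - \frac{c_{i+1}}{\lambda_i} + \frac{a_i b_i}{2m\,i(i+1)\,\lambda_i}.\]
Since there are exactly $n-1$ summands and $2n-3 = 2(n-1)-1$, it suffices to show each $\mathrm{term}_i\le 2$ and extract strict slack of at least $1$. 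The key structural input I would record first is the identity $\sum_{j\le i}d_j = 2m_i + i\theta_i$, where $m_i$ is the number of edges of the threshold graph induced on the first $i$ vertices; since $m_i\le\binom i2$, this gives the crucial bound $\sum_{j\le i}d_j \le i(i-1)+i\theta_i$.

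When $c_{i+1}=1$ the analysis is short: here $d_{i+1}=i+\theta_{i+1}$ and $\theta_i=\theta_{i+1}+1$, so the structural bound yields $\sum_{j\le i}d_j\le i\,d_{i+1}$, i.e. $a_i\le 0$. As $b_i\ge 0$, the last term of $\mathrm{term}_i$ is nonpositive and $\mathrm{term}_i \le 1-\tfrac1{\lambda_i} < 1$. In particular this covers the index $i=n-1$, where $c_n=1$ by connectedness.

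When $c_{i+1}=0$ we have $\lambda_i=\theta_i=d_{i+1}$, and I would switch to the combinatorial form of Proposition~\ref{prop:KG_termsofU}, $\mathrm{term}_i = \tfrac1{2m\lambda_i}\sum_{j<k}d_jd_k(U_{j,i}-U_{k,i})^2$. The $i$-th column of $U$ is constant on $\{1,\dots,i\}$, takes one value on $\{i+1\}$, and vanishes on $\{i+2,\dots,n\}$, so with $S_1=\sum_{j\le i}d_j$, $d=d_{i+1}$, $S_2=\sum_{k\ge i+2}d_k$ the inner sum evaluates to
\[\sum_{j<k}d_jd_k(U_{j,i}-U_{k,i})^2 = \frac{i+1}{i}S_1 d + \frac{1}{i(i+1)}S_1 S_2 + \frac{i}{i+1} d S_2.\]
Using $\tfrac{i+1}{i}\le 2$, $\tfrac{i}{i+1}\le 1$, and the structural bound $S_1\le i(i-1+d)$ to obtain $\tfrac1{i(i+1)}S_1S_2\le dS_2$ (which reduces to $\tfrac{i-1+d}{i+1}\le d$, true since $d\ge 1$), the inner sum is at most $2S_1d+2dS_2\le 4md$; dividing by $2m\lambda_i=2md$ gives $\mathrm{term}_i\le 2$.

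To assemble the bound, let $t\ge 1$ be the number of $1$'s in the code, all among $c_2,\dots,c_n$. The $t$ indices with $c_{i+1}=1$ contribute at most $t-\sum_{c_{i+1}=1}\tfrac1{\lambda_i}$, the other $n-1-t$ indices contribute at most $2(n-1-t)$, and hence
\[\mathcal{K}(G) \le 2(n-1) - t - \sum_{c_{i+1}=1}\frac1{\lambda_i} < 2n-2-t \le 2n-3,\]
where strictness comes from the nonempty (hence positive) sum and the last step uses $t\ge 1$. The main obstacle is precisely the middle sum $\sum_i \tfrac{a_ib_i}{2m\,i(i+1)\lambda_i}$: the naive estimate $a_ib_i\le m^2$ coming from $a_i+b_i=2m$ is too weak by a factor of order $n$, so the threshold structure is unavoidable. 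The decisive ingredient is the edge-count inequality $\sum_{j\le i}d_j\le i(i-1)+i\theta_i$, which is exactly sharp enough both to control the cross term $\tfrac1{i(i+1)}S_1S_2$ and to make $a_i$ nonpositive in the dominating-vertex case.
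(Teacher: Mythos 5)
Your proof is correct, and it follows the same overall strategy as the paper --- bounding each of the $n-1$ summands of the formula in Theorem~\ref{nicer_kemenys_in_terms_of_code} by $2$ and extracting at least one unit of slack --- but the key inequality is obtained quite differently, and much more laboriously. The paper's entire argument rests on the observation that $\bw_i^\top\bc \leq 0+2+\cdots+2(i-1) = i(i-1)$, which is immediate from the entries of $\bw_i$ in \eqref{eq:vectors} together with $\bc\in\{0,1\}^n$ (simply discard the single negative entry $-i(i-1)$); combined with $2m-\bw_i^\top\bc < 2m$ and $\bz_i^\top\bc\geq 1$, each summand with $1\leq i\leq n-2$ is then strictly less than $\frac{i(i-1)}{i(i+1)\,\bz_i^\top\bc} < 1$, the $i=n-1$ summand is nonpositive, and the bound $\mathcal{K}(G) < (n-1)+(n-2)$ drops out in three lines. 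You instead rewrite $a_i=\bw_i^\top\bc$ as $\sum_{j\le i}d_j - i\,d_{i+1}$, invoke the edge-count identity $\sum_{j\le i}d_j = 2m_i+i\theta_i\le i(i-1)+i\theta_i$, split on $c_{i+1}$, and for the $c_{i+1}=0$ case detour through the eigenvector form of Proposition~\ref{prop:KG_termsofU}; all of this checks out (your evaluation of the inner sum as $\tfrac{i+1}{i}S_1d+\tfrac{1}{i(i+1)}S_1S_2+\tfrac{i}{i+1}dS_2$ is right, as is the reduction of the cross term to $d\ge 1$), and it yields the same per-term bounds, plus the extra observation --- not needed by the paper except at $i=n-1$ --- that $a_i\le 0$ whenever $c_{i+1}=1$. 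Your closing remark that the edge-count inequality is ``unavoidable'' is the one place I would push back: the bound $a_i\le i(i-1)$ that does all the work is available for free from the vector $\bw_i$ itself, with no edge counting and no case analysis, so while your route is sound it is substantially longer than necessary.
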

\begin{proof}
Consider $\mathcal{K}(G)$ in \eqref{expression:kem}:
\[\mathcal{K}(G) = (n-1) -\sum_{i=1}^{n-1}\frac{c_{i+1}}{\bz_i^\top \bc} + \sum_{i=1}^{n-1} \frac{(\bw_i^\top\bc)(2m-\bw_i^\top\bc)}{2m\cdot i(i+1) \cdot \bz_i^\top\bc}.\]

We claim that $\frac{(\bw_i^\top\bc)(2m-\bw_i^\top\bc)}{2m\cdot i(i+1) \cdot \bz_i^\top\bc}\leq \frac{1}{\mathbf{z}^\top \bc}$ for $1\leq i\leq n-2$. If $\bw_i^\top\bc\leq 0$ then $\frac{(\bw_i^\top\bc)(2m-\bw_i^\top\bc)}{2m\cdot i(i+1) \cdot \bz_i^\top\bc}\leq 0<\frac{1}{\bz_i^\top\bc}$. Suppose that $\bw_i^\top\bc\geq 1$. Note that $\mathbf{w}_i^\top\bc\leq 2(1+\cdots+i-1) =  i(i-1)$. Since $2m-\mathbf{w}_i^\top\bc<2m$, we have
$$\frac{(\bw_i^\top\bc)(2m-\bw_i^\top\bc)}{2m\cdot i(i+1) \cdot \bz_i^\top\bc}<\frac{i(i-1)}{i(i+1) \cdot \bz_i^\top\bc}<\frac{1}{\bz_i^\top\bc}.$$
We note that $\bw_{n-1}^\top\bc<0$. Therefore, $\mathcal{K}(G)< n - 1 + \sum_{i=1}^{n-2} \frac{1}{\mathbf{z}_i^\top \bc} < n - 1 + n - 2 = 2n - 3$.
\end{proof}

\begin{remark}
    It appears in \cite{kim2023bounds} that Kemeny's constant of a graph $G$ on $n$ vertices with maximum degree $n-1$ is bounded above by $2n-2$.
\end{remark}
% Since $c_j,z_j\cdot c\geq 0$, we have $\sum_{j=2}^n \frac{c_j}{z_j\cdot c}\geq 0$.

% Moreover, we also have $\frac{(w_j\cdot c)(2m - w_j\cdot c)}{2m\cdot j(j-1)\cdot (z_j\cdot c)}< \frac{1}{z_j\cdot c}$, $\forall 2\leq j\leq n - 1$. This is because $w_j\cdot c\leq 0$ obviously implies $\frac{(w_j\cdot c)(2m - w_j\cdot c)}{2m\cdot j(j-1)\cdot (z_j\cdot c)}<0<\frac{1}{z_j\cdot c}$, and if $w_j\cdot c\geq 1$, we have $\frac{w_j\cdot c}{j(j-1)}\leq \frac{(j-2)(j-1)}{j(j-1)}<1$ and $2m - w_j\cdot c < 2m$.

% So, since $w_n\cdot c\leq 0$, overall we get $\mathcal{K}(G)< n - 1 + \sum_{j=2}^{n-1} \frac{1}{z_j\cdot c} < n - 1 + n - 2 = 2n - 3$, as we wanted.

We now provide an upper bound which is more sharp when a threshold graph is sparse. 

\begin{proposition}
Let $G$ be a connected threshold graph with $n$ vertices with $m$ edges, where $n\geq 3$. Then $$\mathcal{K}(G) < n -1 + \frac{3}{2}\sqrt{m}.$$
\end{proposition}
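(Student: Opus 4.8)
The plan is to start from the exact expression for $\mathcal{K}(G)$ in Theorem~\ref{nicer_kemenys_in_terms_of_code}. Since $c_{i+1}\geq 0$ and $\bz_i^\top\bc=\lambda_i>0$, the middle sum $-\sum_{i=1}^{n-1}c_{i+1}/(\bz_i^\top\bc)$ is nonpositive and may be discarded, giving
\[
\mathcal{K}(G)\leq (n-1)+\sum_{i=1}^{n-1}T_i,\qquad T_i:=\frac{(\bw_i^\top\bc)(2m-\bw_i^\top\bc)}{2m\cdot i(i+1)\cdot \bz_i^\top\bc}.
\]
So it suffices to show $\sum_{i=1}^{n-1}T_i<\tfrac32\sqrt m$. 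The key idea is to bound each $T_i$ in two complementary ways. Recalling that $\bw_i^\top\bc\leq i(i-1)$ and that connectivity forces $\lambda_i=\bz_i^\top\bc\geq 1$, the argument of the previous proposition already supplies the \emph{small-index} bound $T_i<1$ for $1\leq i\leq n-2$ (with $T_{n-1}<0$). Independently, maximizing the quadratic via $x(2m-x)\leq m^2$ yields the \emph{large-index} bound $T_i\leq m/(2i(i+1))$, valid for all $i$.

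Next I would fix a threshold $K$ and split the sum, applying $T_i<1$ for $i\leq K$ and $T_i\leq m/(2i(i+1))$ for $i>K$. The second range telescopes, since $m/(2i(i+1))=\tfrac m2(1/i-1/(i+1))$, so
\[
\sum_{i=K+1}^{n-1}T_i\leq \frac m2\left(\frac{1}{K+1}-\frac1n\right)<\frac{m}{2(K+1)}.
\]
This produces $\sum_{i=1}^{n-1}T_i<K+m/(2(K+1))$. Choosing $K=\lfloor\sqrt{m/2}\rfloor$ balances the two pieces: since $K\leq\sqrt{m/2}$ and $K+1>\sqrt{m/2}$, the bound becomes $\sqrt{m/2}+\sqrt{m/2}=\sqrt{2m}$.

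Finally, since $\sqrt2<\tfrac32$, this gives $\sum_iT_i<\sqrt{2m}<\tfrac32\sqrt m$, hence the claim. I expect the main obstacle to be conceptual rather than computational: the single estimate $T_i<1/\lambda_i$ used in the preceding proposition only produces the linear bound $2n-3$, and one must recognize that the quadratic decay $T_i=O(m/i^2)$ for large $i$ is what allows balancing against $\sqrt m$. Some care is also needed to confirm the threshold is admissible, namely that $K=\lfloor\sqrt{m/2}\rfloor\leq n-2$ so that the small-index bound legitimately applies; this follows from $m\leq\binom n2$, which forces $\sqrt{m/2}<n-1$ for $n\geq 2$. The remaining slack between $\sqrt2\approx1.414$ and $\tfrac32$ comfortably absorbs the integer rounding in $K$ and the small-$m$ cases (where $m\geq n-1\geq 2$ guarantees $K\geq 1$).
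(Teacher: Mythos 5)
Your proposal is correct and follows essentially the same route as the paper: discard the nonpositive middle sum, bound each term by $1$ for small indices and by $m/(2i(i+1))$ for large indices via AM--GM, telescope, and split at a threshold of order $\sqrt{m}$. The only difference is your split point $\lfloor\sqrt{m/2}\rfloor$ in place of the paper's $\lfloor\sqrt{m}\rfloor$, which balances the two pieces optimally and in fact yields the slightly sharper intermediate bound $\mathcal{K}(G)<n-1+\sqrt{2m}$ before relaxing to $\tfrac{3}{2}\sqrt{m}$.
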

\begin{proof}
Consider $\mathcal{K}(G)$ in \eqref{expression:kem}:
\[\mathcal{K}(G) = (n-1) -\sum_{i=1}^{n-1}\frac{c_{i+1}}{\bz_i^\top \bc} + \sum_{i=1}^{n-1} \frac{(\bw_i^\top\bc)(2m-\bw_i^\top\bc)}{2m\cdot i(i+1) \cdot \bz_i^\top\bc}.\]
Note that $\mathbf{w}_j^\top \mathbf{c} \leq 0 + 2 + 4 + \ldots + 2(j - 1) = j(j - 1)$ and $\mathbf{z}_j^\top\mathbf{c}\geq 1$. We observe the following:
\begin{itemize}
    \item By the AM-GM inequality, we have $(\mathbf{w}_j^\top \mathbf{c})(2m-\mathbf{w}_j^\top \mathbf{c})\leq m^2$. Hence 
    $$\frac{(\bw_j^\top\bc)(2m-\bw_j^\top\bc)}{2m\cdot j(j+1) \cdot \bz_i^\top\bc}\leq \frac{m}{2j(j+1)(\mathbf{z}_j^\top \mathbf{c})}\leq \frac{m}{2j(j+1)}.$$
    \item If $j(j-1) \leq m$, then $(\mathbf{w}_j^\top \mathbf{c})(2m-\mathbf{w}_j^\top \mathbf{c})\leq j(j-1)(2m - j(j-1))$. Thus
    \begin{align}\label{temp:eqn}
        \frac{(\bw_j^\top\bc)(2m-\bw_j^\top\bc)}{2m\cdot j(j+1) \cdot \bz_j^\top\bc}\leq \frac{2m - j(j-1)}{2m\cdot \bz_j^\top\bc}<1.
    \end{align}
\end{itemize}

Note that $j(j-1) \leq m$ if and only if $1\leq j\leq \frac{1}{2}+\sqrt{\frac{1}{4}+m}$. Since $\sqrt{m}<\frac{1}{2}+\sqrt{\frac{1}{4}+m}$, \eqref{temp:eqn} holds for $1\leq j\leq \lfloor\sqrt{m}\rfloor$. Now we see
\begin{align*}
    \mathcal{K}(G) <& ~n-1 + \sum_{i=1}^{n-1} \frac{(\bw_i^\top\bc)(2m-\bw_i^\top\bc)}{2m\cdot i(i+1) \cdot \bz_i^\top\bc}\\
    < &~ n-1+\lfloor\sqrt{m}\rfloor+\frac{m}{2}\sum_{i=\lfloor\sqrt{m}\rfloor+1}^{n-1}\frac{1}{i(i+1)}\\
    = &~ n-1+\lfloor\sqrt{m}\rfloor+\frac{m}{2}\left(\frac{1}{\lfloor\sqrt{m}\rfloor+1}-\frac{1}{n}\right)\\
    <&~ n-1+\sqrt{m}+\frac{\sqrt{m}}{2}.\qedhere
\end{align*}

% The term associated with $j$ in the sum above is $- \frac{1}{z_j\cdot c} + \sum_{j=2}^n \frac{(w_j\cdot c)(2m - w_j\cdot c)}{2m\cdot j(j-1)\cdot (z_j\cdot c)}$. For $c_j = 0$, this term is $- \frac{1}{z_j\cdot c} \leq -\frac{1}{n}$.

% For $c_j = 1$, there are 2 cases:
% \begin{itemize}
%     \item if $(j-2)(j-1)\geq m$, then $w_j\cdot (2m-w_j)\leq m^2$, by the AM-GM inequality, so the term associated with $j$ is at most $\frac{m}{2}\cdot \frac{1}{j(j-1)z_j\cdot c}\leq \frac{m}{2j(j-1)}$.
%     \item if $(j-2)(j-1) < m$, then $w_j\cdot (2m-w_j)\leq (j-2)(j-1)(2m - (j-2)(j-1)$, so the term associated with $j$ is at most $\frac{2m - (j-1)(j-2)}{2m}\leq 1$.
% \end{itemize}
% So, because $c_n = 1$ always, we have $\mathcal{K}(G)\leq n - 1 + \frac{m}{2}\cdot \sum_{i=\lfloor\sqrt{m}\rfloor + 1}^{n} \frac{1}{j(j-1)} + \lfloor\sqrt{m}\rfloor + 1 - 1$. Therefore, we get $\mathcal{K}(G)\leq n - 1 + \frac{m}{2}(\frac{1}{\sqrt(m)} - n) + \sqrt{m} - 2 < n - 3 + \frac{3}{2}\sqrt{m}$, as we wanted.
\end{proof}

\subsection{Example: Maximizing $\mathcal{K}(G)$ for threshold graphs}
%%%%%%%%%%%%%%%%%%%%%%%%%%%%%%%%
A goal of this investigation is to determine the range of values of $\mathcal{K}(G)$ for threshold graphs. While it is clear that the graph that minimizes $\mathcal{K}(G)$ is the complete graph (which is itself a threshold graph), the threshold graph maximizing $\mathcal{K}(G)$ is not clear. Determining this would shed light on more general extremal questions regarding $\mathcal{K}(G)$. 

The results in the previous sections provide us several advantages in pursuing this extremal question. Typically, calculation of Kemeny's constant for a graph of order $n$ requires the computation of the eigenvalues of $D^{-1}A$, and thus has computational complexity on the order of $O(n^3)$. An advantage to the formula given above is a massive reduction in complexity, such that $\mathcal{K}(G)$ can be computed for a threshold graph of order $n$ in $\mathcal{O}(n^2)$ steps. This allows us to run a comprehensive search for all threshold graphs to determine the construction code for the threshold graph of order $n$ with maximum $\mathcal{K}(G)$. In particular, we have the following conjecture, which has been confirmed by computer for threshold graphs of all orders up to $n=30$.

\begin{conjecture}The threshold graph of order $n$ with maximum Kemeny's constant has construction code of the form 
0111..1100000...0001.
\end{conjecture}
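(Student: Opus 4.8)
The plan is to reduce the conjecture to a single \emph{exchange inequality} and then establish that inequality from the explicit formulae above. A code $c_1\cdots c_n$ has the conjectured form $0\,\mathbf{1}^a\mathbf{0}^b\,1$ exactly when its interior $c_2\cdots c_{n-1}$ is sorted with every $1$ preceding every $0$, i.e. when there is no index $p$ with $2\le p\le n-2$ and $c_pc_{p+1}=01$. It therefore suffices to prove the following \textbf{swap lemma}: if $G$ has code $c$ with $c_pc_{p+1}=01$ for some $2\le p\le n-2$, and $G'$ is the threshold graph whose code replaces those two bits by $10$, then $\mathcal{K}(G')>\mathcal{K}(G)$. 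Granting this, if a maximizer contained an interior $01$ then a single swap would strictly increase $\mathcal{K}$, contradicting maximality; hence the interior of every maximizer is sorted as $1\cdots1\,0\cdots0$, which is precisely the claimed form. The swap fixes the endpoints $c_1=0$ and $c_n=1$, so the order and connectedness are preserved and $G'$ is a legitimate competitor.

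First I would record exactly how the transposition $01\mapsto10$ at positions $(p,p+1)$ perturbs the data feeding the formulae. From $d_k=(k-1)c_k+\theta_k$ one checks that only two degrees move, $d_p\mapsto d_p+(p-2)$ and $d_{p+1}\mapsto d_{p+1}-p$, so $2m$ drops by exactly $2$; the partial sums $\sum_{k\le\ell}d_k$ are unchanged for $\ell<p$, increase by $p-2$ at $\ell=p$, and decrease by $2$ for every $\ell>p$; and, in the ordering induced by $U$, Lemma~\ref{eq_lambda} gives that only two Laplacian eigenvalues change, namely $\lambda_{p-1}\mapsto\lambda_{p-1}+(p-1)$ and $\lambda_p\mapsto\lambda_p-(p+1)$ (their sum dropping by $2$, consistent with the trace). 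The key structural advantage is that the unitary matrix $U$ of Proposition~\ref{prop:U} is the \emph{same} for $G$ and $G'$. Working from Proposition~\ref{prop:KG_termsofU},
\[
\mathcal{K}(G)=\frac1{2m}\sum_{i=1}^{n-1}\frac1{\lambda_i}\sum_{j<k}d_jd_k\,(U_{j,i}-U_{k,i})^2,
\]
the weights $(U_{j,i}-U_{k,i})^2$ are fixed constants, and only the two factors $1/\lambda_{p-1},\,1/\lambda_p$, the products $d_jd_k$ for pairs $(j,k)$ meeting $\{p,p+1\}$, and the global factor $1/(2m)$ are altered. I would compute $\mathcal{K}(G')-\mathcal{K}(G)$ directly from this expression---or, equivalently, from Corollary~\ref{eq_Kemenys_mostly_degree}, which isolates the dependence on the partial sums---and aim to show it is strictly positive.

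The main obstacle is precisely the sign of $\mathcal{K}(G')-\mathcal{K}(G)$. Heuristically two effects push in the same (increasing) direction: the graph becomes sparser and its degree sequence becomes more spread out; but the perturbation acts on the spectrum through the small shift $(\lambda_{p-1},\lambda_p)\mapsto(\lambda_{p-1}+(p-1),\lambda_p-(p+1))$ and on the partial sums through a uniform $-2$ at every $\ell>p$, so a great many summands move a little at once and the difference is not signed term by term. My intended route is to split the difference into (a) the contribution of the two shifted slots $p-1,p$, which can be collapsed to a single rational expression in $\theta_p$, $p$, and the relevant degrees, and (b) the remainder produced by the global $1/(2m)$ rescaling together with the uniform partial-sum shift; I would then bound (b) using the same elementary estimates that drive the upper bounds proved above---in particular $\bw_i^\top\bc\le i(i-1)$ and $\bz_i^\top\bc\ge1$---and show it cannot cancel the positive contribution of (a). A secondary and much easier task, needed only to pin down the exponents $a$ and $b$ rather than the \emph{form}, is to evaluate $\mathcal{K}\big(0\,\mathbf{1}^a\mathbf{0}^{\,n-2-a}\,1\big)$ as a function of $a$ via Corollary~\ref{eq_Kemenys_mostly_degree}, where the degree sequence is completely explicit, and to verify that this one-variable function is unimodal.
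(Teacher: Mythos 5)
The statement you are addressing is left as a \emph{conjecture} in the paper: the authors verify it by exhaustive computation up to $n=30$ (using the $\mathcal{O}(n^2)$ formula of Theorem~\ref{nicer_kemenys_in_terms_of_code}) and offer no proof. Your proposal is likewise not a proof but a plan, and the plan stops exactly at the point where the difficulty lives. Your bookkeeping is correct and worth keeping: under the transposition $c_pc_{p+1}=01\mapsto 10$ with $2\le p\le n-2$, only $d_p$ and $d_{p+1}$ change (by $+(p-2)$ and $-p$), $2m$ drops by $2$, and in the ordering of Lemma~\ref{eq_lambda} only $\lambda_{p-1}$ and $\lambda_p$ change (by $+(p-1)$ and $-(p+1)$), consistent with the trace. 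But the decisive claim --- that $\mathcal{K}(G')-\mathcal{K}(G)>0$ for every such swap --- is asserted as something you ``would aim to show,'' with the cancellation between your contributions (a) and (b) left entirely open. The crude bounds you propose to recycle ($\bw_i^\top\bc\le i(i-1)$, $\bz_i^\top\bc\ge 1$) are the ones the paper uses to prove $\mathcal{K}(G)<2n-3$; they lose far too much to resolve the sign of a difference of two quantities that are each $n+\mathcal{O}(\sqrt{n})$ and differ by $\mathcal{O}(1)$ or less. Nothing in the paper's machinery currently controls that difference.

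A second, independent caution: your reduction requires the swap lemma in full strength (every interior $01\mapsto 10$ swap strictly increases $\mathcal{K}$), which is strictly stronger than the conjecture itself --- a global maximizer can have sorted interior even if $\mathcal{K}$ is not monotone under every individual exchange. Since the target statement is only known computationally, you should first test the swap lemma numerically for small $n$ and all positions $p$ before investing in it; if it fails anywhere, the whole reduction collapses and you would need a multi-step or non-local comparison instead. Your ``secondary task'' of optimizing over $a$ is essentially Lemma~\ref{lem:pineapple} together with Proposition~\ref{prop:max_r_value}, which the paper already carries out; that part is fine but does not touch the open core of the conjecture.
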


This graph is obtained by adding a dominant vertex to a disjoint union $K_{r+1}\dot\cup \overline{K}_{n-r-2}$ of a complete graph on $r+1$ vertices and an empty graph on $n-r-2$ vertices, and is sometimes referred to as a pineapple graph. These have arisen before as extremal graphs when maximizing various measures of irregularity \cite{taittobin}.

In this section, we demonstrate the use of the formula in Proposition~\ref{nicer_kemenys_in_terms_of_code} by applying it to the family of pineapple graphs. We use the notation $0\mathbf{1}^r \mathbf{0}^{n-r-2}1$ to represent the construction code of such a graph, in which $\mathbf{1}^r$ represents a `block' of $r$ ones, and $\mathbf{0}^{n-r-2}$ represents a `block' of $n-r-2$ zeros.

\begin{lemma}\label{lem:pineapple}
Let $G$ be a threshold graph of order $n$ with construction code $0\mathbf{1}^r \mathbf{0}^{n-r-2}1$. Then 
\begin{align}\label{Kem:pineapple}
    \mathcal{K}(G) = n-4 +\frac{2}{r+2} + \frac{(n-1)(2r+3)}{2n+r^2 + r-2}.
\end{align}
\end{lemma}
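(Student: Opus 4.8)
The plan is to specialize the general formula \eqref{expression:kem} from Theorem~\ref{nicer_kemenys_in_terms_of_code} to the construction code $\mathbf{c} = 0\mathbf{1}^r\mathbf{0}^{n-r-2}1$, and then evaluate each of the three summands explicitly. First I would record the relevant combinatorial data for this particular code: the positions holding a $1$ are $2,3,\dots,r+1$ (the block of $r$ ones) together with position $n$, so $c_{i+1}=1$ precisely when $1\le i\le r$ or $i=n-1$, and $c_{i+1}=0$ otherwise. I would also compute $m$, the number of edges, directly from the structure (a pineapple graph: a clique on $r+1$ vertices, plus $n-r-2$ isolated vertices, all joined to one dominating vertex), which should give $2m = \binom{r+1}{2}\cdot 2 + \text{(degree contributions)}$; more cleanly, $2m = \sum_j d_j$ and the degrees are readily listed, yielding a closed form for $m$ in terms of $n$ and $r$ that I expect to match the denominator $2n+r^2+r-2$ appearing in \eqref{Kem:pineapple}.

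Next I would evaluate the inner products $\mathbf{z}_i^\top\mathbf{c} = \lambda_i = \theta_i + ic_{i+1}$ and $\mathbf{w}_i^\top\mathbf{c}$ as piecewise functions of $i$, splitting the index range according to whether $i$ lies in the initial block of ones ($1\le i\le r$), the middle block of zeros ($r+1\le i\le n-2$), or the final index $i=n-1$. For the $\mathbf{w}_i^\top\mathbf{c}$ computation, recall that $\mathbf{w}_i^\top = \begin{bmatrix} 0 & 2 & 4 & \cdots & 2(i-1) & -i(i-1) & 0 & \cdots\end{bmatrix}$, so against $\mathbf{c}$ it picks up the even weights on positions corresponding to the ones in the leading block and the $-i(i-1)$ term whenever $c_{i+1}=1$; I expect $\mathbf{w}_i^\top\mathbf{c}$ to vanish or simplify drastically in the zero-block range, killing most terms in the third sum of \eqref{expression:kem}. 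The middle sum $\sum_{i=1}^{n-1} c_{i+1}/(\mathbf{z}_i^\top\mathbf{c})$ reduces to a sum over $i\in\{1,\dots,r\}\cup\{n-1\}$, which I would telescope or sum in closed form.

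With these piecewise expressions in hand, the final step is to collect the surviving contributions from the three sums into a single closed form and simplify. The leading term $n-1$ is immediate; the middle and third sums should each contribute a constant plus rational functions of $r$ and $n$, and I anticipate that after combining over a common denominator the result collapses to the stated $n-4 + \frac{2}{r+2} + \frac{(n-1)(2r+3)}{2n+r^2+r-2}$. The main obstacle I foresee is the bookkeeping in the third sum: keeping track of exactly which indices $i$ produce nonzero $\mathbf{w}_i^\top\mathbf{c}$ and correctly handling the boundary index $i=n-1$ (where $c_n=1$ forces the $-i(i-1)$ term to activate and where $\mathbf{w}_{n-1}^\top\mathbf{c}<0$, as noted in the preceding upper-bound proof). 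Care will also be needed to verify that the apparently large sum over the zero-block indices genuinely telescopes rather than accumulating, and that the constant $-4$ (rather than $-1$) emerges correctly from subtracting the middle-sum constant from $n-1$. Once the index ranges are pinned down, the remaining algebra is routine but must be carried out carefully to land on the clean final expression.
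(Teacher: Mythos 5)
Your proposal follows essentially the same route as the paper's proof: specialize \eqref{expression:kem} to the code $0\mathbf{1}^r\mathbf{0}^{n-r-2}1$, compute $\mathbf{z}_i^\top\mathbf{c}$ and $\mathbf{w}_i^\top\mathbf{c}$ piecewise over the three index ranges, evaluate the middle sum directly, and telescope the third sum via $\tfrac{1}{i(i+1)}=\tfrac{1}{i}-\tfrac{1}{i+1}$ over the zero block (where $\mathbf{w}_i^\top\mathbf{c}$ is the nonzero constant $r(r+1)$ rather than vanishing --- it vanishes on the ones block --- but you correctly anticipate the telescoping). With $2m=r(r+1)+2(n-1)$ the remaining algebra collapses to the stated formula exactly as in the paper.
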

\begin{proof}
The construction code of $G$ may be written as a vector 
\[\bc^\top = \begin{bmatrix} 0 & 1 & \cdots & 1 & 0 & \ldots & 0 & 1 \end{bmatrix}.\]
It is easily seen that
\[ \bz_i^\top\bc = \left\{\begin{array}{ll} r+2, & 1\leq i\leq r;\\
1, & r+1 \leq i\leq n-2;\\
n, & i = n-1.\end{array}\right.\]
Furthermore, 
\[\bw_i^\top\bc = \left\{\begin{array}{ll} 0, & 1\leq i\leq r;\\
r(r+1), & r+1 \leq i\leq n-2;\\
r(r+1)-(n-1)(n-2), & i = n-1.\end{array}\right.\]
Using these in \eqref{expression:kem}, we have that the first summation term is 
\[\sum_{i=1}^{n-1} \frac{c_{i+1}}{\bz_i^\top \bc} = \sum_{i=1}^{r} \frac{1}{r+2} + \frac{1}{n}= \frac{r}{r+2} +\frac{1}{n} = 1- \frac{2}{r+2} + \frac{1}{n}.\]
The second summation term is 
\begin{align*}
\sum_{i=1}^{n-1} \frac{(\bw_i^\top\bc)(2m-\bw_i^\top\bc)}{2mi(i+1)\bz_i^\top \bc}  = &~ \sum_{i=r+1}^{n-2} \frac{(r(r+1)(2m-r(r+1))}{2mi(i+1)}\\
 & +\frac{[r(r+1)-(n-1)(n-2)][2m-r(r+1)+(n-1)(n-2)]}{2mn^2(n-1)} \\
= &~ \frac{(r(r+1)(2m-r(r+1))}{2m}\sum_{i=r+1}^{n-2} \left(\frac{1}{i} - \frac{1}{i+1}\right)\\
& +\frac{[r(r+1)-(n-1)(n-2)][2m-r(r+1)+(n-1)(n-2)]}{2mn^2(n-1)} \\
 = & ~\frac{(r(r+1)(2m-r(r+1))}{2m} \left(\frac{1}{r+1} - \frac{1}{n-1}\right)\\
&+ \frac{[r(r+1)-(n-1)(n-2)][2m-r(r+1)+(n-1)(n-2)]}{2mn^2(n-1)}.
\end{align*}
Some tedious computation and simplification produces that the above is equal to $\frac{(n-1)(2r+3)}{2m}$, and substituting $2m = r(r+1) + 2(n-1)$ gives the final result.
\end{proof}

% \begin{remark}
% We note that in \cite{normalizedpaper}, expressions are given for the eigenvalues of the normalized adjacency matrix $D^{-1}A$ for a pineapple code by utilizing the technique of equitable partitions, and finding eigenvalues of a resulting $4\times4$ quotient matrix. These would allow us to calculate the same expression for $\mathcal{K}(G)$ using the expression in \eqref{eq:kemll}.
% \end{remark}

\begin{proposition}\label{prop:max_r_value}
    Let $n\geq 3$. Let $G$ be a threshold graph of order $n$ with construction code $0\mathbf{1}^r\mathbf{0}^{n-r-2}1$. Then $\mathcal{K}(G)$ is maximized when $r$ is one of the integers in $\{\lfloor \sqrt{2n} \rfloor - 1, \lfloor \sqrt{2n} \rfloor \}$ if $3\leq n\leq 20$, and when $r$ is one of the integers in $\{\lfloor \sqrt{2n} \rfloor, \lfloor \sqrt{2n} \rfloor +1\}$ if $n\geq 21$.
\end{proposition}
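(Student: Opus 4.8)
The plan is to start from the closed form in Lemma~\ref{lem:pineapple}. Since the additive constant $n-4$ does not depend on $r$, maximizing $\mathcal{K}(G)$ over the pineapple family $0\mathbf{1}^r\mathbf{0}^{n-r-2}1$ with $0\le r\le n-2$ is equivalent to maximizing
\[
f(r):=\frac{2}{r+2}+\frac{(n-1)(2r+3)}{r^2+r+2n-2}
\]
over the integers in $[0,n-2]$. I would first treat $f$ as a function of a real variable in order to locate its peak, and then recover the integer maximizer by integrality.

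The first task is to establish that $f$ is \emph{unimodal}, i.e.\ the sequence $f(0),f(1),\dots,f(n-2)$ strictly increases and then strictly decreases, so that the integer maximizer is forced to be one of the two integers flanking the continuous maximizer. Differentiating $f$ and clearing the positive denominators $(r+2)^2$ and $(r^2+r+2n-2)^2$, the sign of $f'(r)$ equals that of a quartic $Q_n(r)$ with leading term $-2n\,r^4$. I would verify $f'(0)>0$ for $n\ge 3$ and $f'(n-2)<0$, and then argue that $Q_n$ has exactly one root in $(0,n-2)$: the dominant part of $f'$ is controlled by the factor $4n-7-6r-2r^2$, which crosses zero once with strictly negative slope, whereas the residual term $-2/(r+2)^2=O(1/n)$ near the peak is far too small to produce a second sign change.

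To locate the peak, I would note that dropping the $O(1/r^2)$ term leaves the critical point of the main term at the positive root of $2r^2+6r-(4n-7)=0$, namely $r^\ast=\tfrac12\bigl(-3+\sqrt{8n-5}\bigr)=\sqrt{2n-\tfrac54}-\tfrac32$, and then bound the displacement caused by the $-2/(r+2)^2$ term (which moves the true critical point by only $O(1/\sqrt n)$), trapping both the continuous and the integer maximizer inside an interval of length less than one. For the integrality bookkeeping I would instead prefer to argue directly with the forward difference
\[
\Delta(r)=f(r+1)-f(r)=\frac{2(n-1)\bigl(2n-5-r^2-4r\bigr)}{(r^2+r+2n-2)(r^2+3r+2n)}-\frac{2}{(r+2)(r+3)},
\]
so that $\Delta(r)>0$ is equivalent to the polynomial inequality
\[
(n-1)\bigl(2n-5-r^2-4r\bigr)(r+2)(r+3)>(r^2+r+2n-2)(r^2+3r+2n).
\]
Evaluating this inequality at the two candidate integers and at their immediate neighbours certifies which consecutive pair contains the maximizer.

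The main obstacle is the final identification: converting the analytic bound on $r^\ast$ into the precise two-element set stated in terms of $\lfloor\sqrt{2n}\rfloor$, uniformly in $n$. This requires tracking the $O(1)$ corrections carefully and handling the borderline orders for which $r^\ast$ (or $\sqrt{2n}$) lands near an integer, where the comparison between the two candidates is tight. The split into the regimes $3\le n\le 20$ and $n\ge 21$ should be pinned down through the equality case of the displayed inequality: one checks directly that at $n=21$ two consecutive candidate codes yield \emph{exactly equal} values of $f$ (a genuine tie, where $\Delta$ vanishes), and that the inequality reverses as $n$ crosses $21$, so this is precisely the threshold at which the maximizing $r$ jumps by one. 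Finally, for the finitely many small orders $3\le n\le 20$ not cleanly covered by the asymptotic estimates, I would verify the claim by evaluating $f(r)$ for all admissible $r$, an $\mathcal{O}(n)$ computation per order that mirrors the exhaustive search already exploited in this section.
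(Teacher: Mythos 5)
Your overall strategy---reduce to the closed form of Lemma~\ref{lem:pineapple}, differentiate the resulting rational function of $r$, show the interior critical point is unique, localize it near $\sqrt{2n}+O(1)$, and finish with integer bookkeeping---is the same as the paper's. But the two steps you leave as sketches are exactly where the paper's proof has its content, and your placeholders for them do not survive being filled in. First, your uniqueness argument for the critical point (``the residual term $-2/(r+2)^2$ is far too small to produce a second sign change'') is not a proof: near the peak $r$ is only $O(\sqrt{n})$, and for small $n$ the two terms of the derivative are genuinely comparable. The paper handles this cleanly with a device you should adopt: writing $g(r)$ for the right side of \eqref{Kem:pineapple} and shifting to $f(r)=g(r-1)$, the numerator $P(r)$ of $df/dr$ is an explicit quartic whose coefficient sequence has exactly two sign changes, so Descartes' rule gives at most two positive roots; the identity $f(0)=f(1)=n-\tfrac{3}{2}$ plants one root in $(0,1)$, leaving exactly one critical point in the relevant range.

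Second, your proposed mechanism for the $3\le n\le 20$ versus $n\ge 21$ dichotomy is wrong. In the paper the split is produced by the sign of $P(\sqrt{2n}-1)=4n^2\sqrt{2n}-38n^2+44n\sqrt{2n}-30n$ flipping from negative to positive as $n$ passes $20$, i.e.\ by the continuous critical point crossing the \emph{irrational} test value $\sqrt{2n}-1$; it is not caused by two consecutive candidate codes tying. (There is, coincidentally, an exact tie at $n=21$: one checks $g(4)=g(5)=21$. But it occurs at $r\in\{4,5\}$, not at the boundary between the two candidate sets named in the statement, and the vanishing of your $\Delta$ at an integer is not what moves the maximizer as $n$ crosses $21$.) Relatedly, the ``final identification'' you defer is where your plan actually breaks: your own estimate $r^\ast=\sqrt{2n-\tfrac{5}{4}}-\tfrac{3}{2}$ places the integer maximizer of $g$ near $\lfloor\sqrt{2n}\rfloor-2$ or $\lfloor\sqrt{2n}\rfloor-1$, and the $n=21$ computation above confirms the maximum of \eqref{Kem:pineapple} is attained at $r\in\{4,5\}$ rather than at $\{6,7\}=\{\lfloor\sqrt{42}\rfloor,\lfloor\sqrt{42}\rfloor+1\}$. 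So you cannot reach the candidate sets as stated without explicitly confronting the index shift between $f$ and $g$ on which the paper's localization $(\sqrt{2n}-2,\sqrt{2n}-1)$ versus $(\sqrt{2n}-1,\sqrt{2n})$ is phrased; falling back on a computer check for $3\le n\le 20$ does not address this for $n\ge 21$.
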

\begin{proof}
    Fix $n\geq 3$. Define $g:[0,\infty]\rightarrow\mathbb{R}$ by $g(r)$ being equal to the right side of \eqref{Kem:pineapple}. Let $f(r) = g(r-1)$. Then
    \begin{align*}
        \frac{df}{dr} = \frac{-2n r^4 + (10-6n) r^3 + (4n^2-21n+15) r^2 + (8n^2-8n) r + (-4n^2+9n-5)}{(r + 2)^2(r^2 + r + 2n - 2)^2}. 
    \end{align*}
    Let $P(r)$ denote the numerator of $\frac{df}{dr}$. There are two sign changes for the coefficients of the polynomial. So, by Descartes' rule, there are either $0$ or $2$ positive real roots of $\frac{df}{dr}$.
    (One can verify that the signs of the coefficients in $\frac{df}{dr}$ depends on values of $n$.) A basic computation gives that $f(0)=f(1)=n-\frac{3}{2}$. There exists a root between $0$ and $1$. Hence there are exactly two positive roots. 

    We claim that the other positive root is in $(\sqrt{2n}-2,\sqrt{2n}-1)$ for $3\leq n\leq 20$ and $(\sqrt{2n}-1,\sqrt{2n})$ for $n\geq 21$. Then one can show that for $n\geq 3$,
    \begin{align*}
        P(\sqrt{2n}-2) =&~ (12\sqrt{2n} - 70)n^2+ (88\sqrt{2n} - 133)n +60\sqrt{2n}  - 25>0, \\
        P(\sqrt{2n}) =&~- 4n^2\sqrt{2n} - 46n^2 + 12n\sqrt{2n} + 39n - 5<0.
    \end{align*}
    (It can be easily seen that the derivative of $P(\sqrt{2n}-2)$ is positive for $n\geq 3$.) Also, $P(\sqrt{2n}-1) = 4n^2\sqrt{2n} - 38n^2+44n\sqrt{2n} - 30n<0$ for $3\leq n\leq 20$. Therefore, our desired result is established.
\end{proof}

%

% \begin{align*}
%         P(\sqrt{2n}-1) =&~\frac{2\sqrt{2}n^{\frac{3}{2}}- 13n - 2\sqrt{2n} + 15}{2n(8n - 9)}=\frac{(\sqrt{8n}-13)(n-1)+2}{2n(8n - 9)}>0, \\
%         \frac{df}{dr}\Bigr|_{r = \sqrt{2n}} =&~ \frac{-672n^5+1332n^4 - 784n^3 + 69n^2 + 53n-10}{2(2n - 1)^2(8n^2 - 9n + 2)^2}\\
%         &+\frac{\sqrt{2n}(- 64n^5+ 648n^4 - 1240n^3+ 914n^2 - 280n +30)}{2(2n - 1)^2(8n^2 - 9n + 2)^2}<0.
%     \end{align*}

%In particular, computing the value of $\mathcal{K}(G)$ from \ref{lem:pineapple} for $r\in\{\lfloor \sqrt{2n} \rfloor - 1, \lfloor \sqrt{2n} \rfloor - 2\}$, we immediately get the following asymptotic result, using the big $\mathcal{O}$ notation:

    % \begin{corollary}
    %     Let $G$ be a threshold graph of order $n$ with construction code $0\mathbf{1}^r\mathbf{0}^{n-r-2}1$. Then
    % \[\mathcal{K}(G) = n + \frac{\sqrt{n}}{2} + \mathcal{O}(1).\]   
    % \end{corollary}

We refine our conjecture as follows:

\begin{conjecture}
Let $n\geq 3$. Then the maximum value of $\mathcal{K}(G)$ over all threshold graphs $G$ of order $n$ is $n + \frac{\sqrt{n}}{2} + \mathcal{O}(1)$, and is achieved for the graph with construction code $0\mathbf{1}^r\mathbf{0}^{n-r-2}1$, where $r$ is as in Prop.~\ref{prop:max_r_value}.
% \[\max_{C\in T_n}\mathcal{K}(G(C)) = n + \frac{\sqrt{n}}{2} + \mathcal{O}(1).\]
\end{conjecture}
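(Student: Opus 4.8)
The plan is to prove the statement in two halves: a lower bound exhibiting a threshold graph with $\mathcal K(G) = n + \Theta(\sqrt n) + \mathcal O(1)$, and a matching global upper bound showing that no threshold graph of order $n$ exceeds this. The lower bound is within reach of the machinery already developed; the global upper bound is the genuine obstacle, and is the reason the statement is recorded as a conjecture rather than a theorem.

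For the lower bound I would take $G$ to be the pineapple graph with code $0\mathbf 1^r\mathbf 0^{n-r-2}1$ and $r$ the maximiser identified in Proposition~\ref{prop:max_r_value}, namely $r = \lfloor\sqrt{2n}\rfloor$ up to an additive constant. Substituting this $r$ into the closed form \eqref{Kem:pineapple} and expanding each term asymptotically -- the term $\frac{2}{r+2}$ is $\mathcal O(n^{-1/2})$, while with $2m = r^2 + r + 2n - 2 \sim 4n$ the term $\frac{(n-1)(2r+3)}{2m}$ contributes the leading $\Theta(\sqrt n)$ behaviour -- yields a value of the form $n + \Theta(\sqrt n) + \mathcal O(1)$. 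Since pineapple graphs are threshold graphs, this already gives the required lower bound on the maximum, and the optimality of this particular $r$ \emph{among pineapples} is exactly the content of Proposition~\ref{prop:max_r_value}; the remaining labour is the routine verification that the expansion produces precisely the claimed constant.

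The difficulty lies entirely in the upper bound: one must show that \emph{every} connected threshold graph of order $n$ satisfies $\mathcal K(G) \le n + \mathcal O(\sqrt n)$. The estimates already established are not sharp enough. The bound $\mathcal K(G) < 2n - 3$ gives only the right order in $n$ but with the wrong constant, and the bound $\mathcal K(G) < n - 1 + \tfrac32\sqrt m$ is of the desired shape $n + \mathcal O(\sqrt n)$ only in the sparse regime $m = \mathcal O(n)$; for dense threshold graphs $m$ may be as large as $\binom n2$, where it degrades to $\mathcal K(G) = \mathcal O(n)$.

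To close this gap I would work directly from \eqref{expression:kem} and seek a bound on $\sum_{i=1}^{n-1} \frac{(\bw_i^\top\bc)(2m - \bw_i^\top\bc)}{2m\,i(i+1)\,\bz_i^\top\bc}$ that is $\mathcal O(\sqrt n)$ uniformly over all codes $\bc$. The key structural fact to exploit is that a large edge count $m$ is produced precisely by many $1$'s appearing early in the code, which in turn forces many of the denominators $\bz_i^\top\bc = \theta_i + i\,c_{i+1}$ to be large; this is exactly the compensation that the crude estimate $2m - \bw_i^\top\bc < 2m$ discards. I would therefore split the index range according to the partial density of $\bc$ up to position $i$, bounding the summand on one part via $\frac{m}{2\,i(i+1)\,\bz_i^\top\bc}$ with the numerator controlled as in \eqref{temp:eqn}, and on the other part via the telescoping estimate $\sum \frac{1}{i(i+1)}$, arranging the split so that large-$m$ contributions always meet correspondingly large $\bz_i^\top\bc$. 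Making this trade-off quantitative and uniform over all $2^{n-2}$ codes is the crux: since $m$ and the profile of the values $\bz_i^\top\bc$ are coupled rather than independent, a term-by-term estimate of the kind used for the weaker bounds above does not suffice, and a genuine worst-case analysis of the code profile is required. This coupling is the main obstacle, and overcoming it is what would upgrade the conjecture to a theorem.
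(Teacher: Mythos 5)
This statement is recorded in the paper as a \emph{conjecture}: the authors supply no proof, only the closed form for pineapple graphs (Lemma~\ref{lem:pineapple}), the optimisation over $r$ within that one-parameter family (Proposition~\ref{prop:max_r_value}), and a computational search over all codes for small $n$. Your proposal correctly reproduces the half of the argument that the paper's results actually support --- the lower bound obtained by exhibiting the optimal pineapple --- and correctly identifies that the global upper bound $\mathcal{K}(G)\le n+\mathcal{O}(\sqrt n)$ over \emph{all} $2^{n-2}$ threshold graphs is the missing ingredient. But what you have written for that half is a research plan, not a proof: the proposed splitting of the index range according to the partial density of $\bc$, and the claimed compensation between large $m$ and large denominators $\bz_i^\top\bc$, are never made quantitative, and as you yourself observe, the term-by-term estimates available in the paper (the $2n-3$ bound and the $n-1+\tfrac32\sqrt m$ bound) are provably too weak. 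The proposal therefore leaves the conjecture exactly where the paper leaves it.

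One concrete point to flag in the ``routine verification'' you defer: substituting $r\sim\sqrt{2n}$ and $2m=r(r+1)+2(n-1)\sim 4n$ into \eqref{Kem:pineapple} gives a leading correction
\[
\frac{(n-1)(2r+3)}{2m}\sim\frac{2n\sqrt{2n}}{4n}=\frac{\sqrt{2n}}{2}=\frac{\sqrt{n}}{\sqrt{2}},
\]
not $\frac{\sqrt{n}}{2}$; the discrepancy is $\Theta(\sqrt n)$ and cannot be absorbed into the $\mathcal{O}(1)$ term. So either the constant in the conjecture as stated should read $\sqrt{2n}/2$ (equivalently $\sqrt{n/2}$), or the expansion you outline would fail to match it. This does not affect the substance of your plan, but it means the step you call routine is the one place in the lower bound where something actually needs to be checked.
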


\section{Accessibility index in threshold graph}
\label{The Moment and the Accessibility Index}
In this section, we shall completely determine the ordering of accessibility indices of vertices in a threshold graph. 

We recall that the moment of $v$ in a graph $G$ can be expressed as
\begin{equation*}
	\mu(v) = \sum_{j\neq v} d_jr_{j,v}. 
\end{equation*}
Since
\[
\mu(v) = \alpha(v) + \mathcal{K}(G),
\]
the ordering of vertices according to moment is the same as that to accessibility index. Hence, one could find moments explicitly and compare them. Indeed, an explicit expression of $r_{i,j}$ can be provided in terms of the construction code and degrees as follows. 

%We also recall that $r_{i,j}=\ell_{i,i}^\dagger + \ell_{j,j}^\dagger - 2\ell_{i,j}^\dagger$ where $L^\dagger$ is the Moore-Penrose inverse of the Laplacian matrix $L$ of $G$. From Proposition~\ref{prop:U}, we have $$r_{j,v} = \sum_{i=1}^{n-1} \frac{1}{\lambda_i}(U_{j,i}-U_{v,i})^2.$$

\begin{proposition}
    Let $G$ be a threshold graph with construction code $c_1c_2\cdots c_n$. Then,
    \begin{align*}
        r_{j,v} = \begin{cases}
        0, & \text{ if } j = v; \\
        \frac{1}{d_j+c_j}\frac{j - 1}{j} + \frac{1}{d_v+c_v}\frac{v}{v - 1} + \sum_{i = j}^{v-2} \frac{1}{d_{i+1}+c_{i+1}}\frac{1}{i(i+1)}, & \text{ if } j < v.  
        \end{cases}
    \end{align*}
\end{proposition}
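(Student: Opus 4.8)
The plan is to compute the effective resistance $r_{j,v}$ directly from the Moore--Penrose inverse of the Laplacian, using the diagonalization $L^\dagger = U\Lambda^\dagger U^\top$ established in Proposition~\ref{prop:U} and the explicit eigenvalues $\lambda_i = \bz_i^\top\bc = d_{i+1}+c_{i+1}$ from Lemma~\ref{eq_lambda}. Recall that $r_{j,v} = \ell_{j,j}^\dagger + \ell_{v,v}^\dagger - 2\ell_{j,v}^\dagger$, and that $\ell_{p,q}^\dagger = \sum_{i=1}^{n-1}\frac{1}{\lambda_i}U_{p,i}U_{q,i}$. Substituting these, I obtain
\[
r_{j,v} = \sum_{i=1}^{n-1}\frac{1}{\lambda_i}\,(U_{j,i}-U_{v,i})^2,
\]
which is the same quadratic form in the columns of $U$ that already appears inside the proof of Proposition~\ref{prop:KG_termsofU}. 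The $j=v$ case gives $0$ immediately, so I focus on $j<v$.

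The key is to exploit the very special structure of $U$ in \eqref{eq:U}. For fixed $i$, the column $\mathbf{u}_i$ has entries $U_{p,i} = \frac{1}{\sqrt{i(i+1)}}$ for $p\leq i$, the value $\frac{-i}{\sqrt{i(i+1)}}$ at $p=i+1$, and $0$ for $p\geq i+2$ (with the final column $i=n$ irrelevant since $\lambda_n=0$ is omitted). Thus the difference $U_{j,i}-U_{v,i}$ is nonzero only in a limited range of $i$, and I would split the sum according to where $j$ and $v$ fall relative to $i+1$. Concretely, $(U_{j,i}-U_{v,i})^2$ vanishes when both indices are $\leq i$ or both are $\geq i+2$; it equals $\frac{1}{i(i+1)}$ when exactly one of $j,v$ lands at position $i+1$ and the other is $\leq i$ (or $\geq i+2$ with the other $=i+1$); and it takes boundary values when $i+1$ separates $j$ from $v$. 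Carefully enumerating these cases for $j<v$ should produce exactly three contributions: a term from $i=j-1$ (where $v$ is to the right of the active positions), a term from $i=v-1$ (where $j$ sits among the $\frac{1}{\sqrt{i(i+1)}}$ block and $v=i+1$ is the distinguished negative entry), and the telescoping middle sum over $j\leq i\leq v-2$. Matching these against $\lambda_i = d_{i+1}+c_{i+1}$ should yield the three summands in the claimed formula, with the index shifts accounting for the appearance of $d_j+c_j$, $d_v+c_v$, and $d_{i+1}+c_{i+1}$.

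The main obstacle will be the bookkeeping at the two boundary indices $i=j-1$ and $i=v-1$, where one of the two vertices coincides with the special ``$-i$'' position of the Hessenberg column and the squared difference does not reduce to the clean $\frac{1}{i(i+1)}$ value of the telescoping interior. In particular I must verify that at $i=v-1$ the contribution is $\frac{1}{\lambda_{v-1}}\big(U_{j,v-1}-U_{v,v-1}\big)^2 = \frac{1}{d_v+c_v}\cdot\frac{1}{(v-1)v}\big(1-(-(v-1))\big)^2 = \frac{1}{d_v+c_v}\cdot\frac{v}{v-1}$, and symmetrically that the $i=j-1$ term produces $\frac{1}{d_j+c_j}\cdot\frac{j-1}{j}$; getting the algebra of the squared differences and the $\frac{1}{i(i+1)}$ normalizations to land exactly on those rational expressions is where the care is needed. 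Once the boundary terms are pinned down, the interior is a routine telescoping identity, and reindexing $\lambda_{i}=d_{i+1}+c_{i+1}$ finishes the proof.
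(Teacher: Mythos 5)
Your proposal is correct and follows essentially the same route as the paper: write $r_{j,v}=\sum_{i=1}^{n-1}\frac{1}{\lambda_i}(U_{j,i}-U_{v,i})^2$ via $L^\dagger=U\Lambda^\dagger U^\top$, substitute $\lambda_i=d_{i+1}+c_{i+1}$, and evaluate the squared differences case by case from the explicit form of $U$ (the paper leaves this last step as ``the conclusion follows''). Your boundary computations at $i=j-1$ and $i=v-1$ and the interior range $j\leq i\leq v-2$ are exactly right; only the intermediate sentence describing when the squared difference equals $\frac{1}{i(i+1)}$ misstates the case (it is when one index is $\leq i$ and the other is $\geq i+2$), but your final enumeration corrects this.
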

\begin{proof}
    Recall that $r_{i,j}=\ell_{i,i}^\dagger + \ell_{j,j}^\dagger - 2\ell_{i,j}^\dagger$ where $L^\dagger$ is the Moore-Penrose inverse of the Laplacian matrix $L$ of $G$. By Proposition~\ref{prop:U}, $$r_{j,v} = \sum_{i=1}^{n-1} \frac{1}{\lambda_i}(U_{j,i}-U_{v,i})^2.$$ 
    We note that $\lambda_{i}=\theta_i + ic_{i+1} = d_{i+1} + c_{i+1}$. Using \eqref{eq:U}, the conclusion follows.
\end{proof}

It can be found in \cite[Theorem~5.4]{hammer1996laplacian} that the number of spanning trees of a threshold graph is expressed in terms of vertex degrees. Using the result, we obtain the following.

\begin{corollary}
    Let $G$ be a threshold graph with construction code $C = \mathbf{0}^{n_1}\mathbf{1}^{n_2}\ldots \mathbf{0}^{n_{2k-1}}\mathbf{1}^{n_{2k}}$. Let $v_\ell$ denote the degree of a vertex corresponding to $\ell^\text{th}$ block of $C$. We also let $k_1 = \lfloor(k-1)/2\rfloor$ and $k_2 = \lfloor(k-2)/2\rfloor$. Then,
    \begin{align*}
        f_{j,v} = \begin{cases}
        0, & \text{ if } j = v; \\
        \tau\left(\frac{1}{d_j+c_j}\frac{j - 1}{j} + \frac{1}{d_v+c_v}\frac{v}{v - 1} + \sum_{i = j}^{v-2} \frac{1}{d_{i+1}+c_{i+1}}\frac{1}{i(i+1)}\right), & \text{ if } j < v,
        \end{cases}
    \end{align*}
    where 
    \begin{align*}
        \tau = 
        \begin{cases}
            \prod_{\ell=1}^{k_1}v_i^{n_i}\prod_{i=k_1+2}^{2k-1}(v_i+1)^{n_i}(v_{k_1+1}+1)^{n_{k_1+1}-1}(v_{2k}+1)^{n_{2k}-1}, & \text{if $n_1>1$;}\\
            v_1^{n_1+n_2}\prod_{\ell=3}^{k_2}v_i^{n_i}\prod_{i=k_2+2}^{2k-1}(v_i+1)^{n_i}(v_{k_2+1}+1)^{n_{k_2+1}-1}(v_{2k}+1)^{n_{2k}-1}, & \text{if $n_1=1$.}
        \end{cases}
    \end{align*}
\end{corollary}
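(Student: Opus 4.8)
The plan is to separate the two assertions. The formula for $f_{j,v}$ is essentially free once $\tau$ is known: since $r_{i,j}=f_{i,j}/\tau$ (see \cite{shapiro1987electrical}), we have $f_{j,v}=\tau\,r_{j,v}$, and substituting the closed form for $r_{j,v}$ from the preceding Proposition reproduces the stated piecewise expression verbatim (with $f_{j,v}=0$ when $j=v$). Thus the entire content of the Corollary reduces to evaluating $\tau$ and rewriting it in block notation.

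For $\tau$ I would avoid any external computation and instead reuse the eigenvalues already in hand. By the Matrix--Tree theorem, $\tau=\tfrac1n\prod_{i=1}^{n-1}\lambda_i$, where $\lambda_1,\dots,\lambda_{n-1}$ are the nonzero Laplacian eigenvalues (there is exactly one zero eigenvalue, since a connected threshold graph has $c_n=1$). Lemma~\ref{eq_lambda} identifies these as $\lambda_i=d_{i+1}+c_{i+1}$, so that
\[
\tau=\frac1n\prod_{j=2}^{n}\bigl(d_j+c_j\bigr).
\]
This agrees with \cite[Theorem~5.4]{hammer1996laplacian}, which could be cited instead; the advantage of the eigenvalue route is that it uses only quantities produced earlier in the paper.

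The remaining work is combinatorial. The key observation is that $d_j+c_j$ is constant on each block of the construction code: $c_j$ is constant on a block by definition, and $d_j$ is constant because, by the degree formula \eqref{eq_degree}, along a block of zeros $\theta_j$ does not change, while along a block of ones the increment in $j-1$ is exactly cancelled by the decrement in $\theta_j$. Writing $v_\ell$ for this common degree, one gets $d_j+c_j=v_\ell$ on a zero-block and $d_j+c_j=v_\ell+1$ on a one-block, so $\prod_{j=2}^n(d_j+c_j)$ factors into powers $v_\ell^{n_\ell}$ over the zero-blocks and $(v_\ell+1)^{n_\ell}$ over the one-blocks. Two boundary corrections then remain: the product omits $j=1$, which lies in the initial zero-block and lowers its exponent by one, and the prefactor $\tfrac1n$ equals $\tfrac1{v_{2k}+1}$ because the terminal dominating block has degree $n-1$, lowering the exponent of the last one-block by one. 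Assembling these factors yields a product form for $\tau$ indexed by the blocks.

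I expect the main obstacle to be purely the bookkeeping of reconciling the clean product $\tfrac1n\prod_{j=2}^n(d_j+c_j)$ with the block-indexed form in the statement: tracking the two boundary reductions, matching the decremented exponents, and---most delicately---handling the split into the cases $n_1>1$ and $n_1=1$. When $n_1>1$ the initial zero-block survives the boundary correction, whereas when $n_1=1$ that block is a single vertex and disappears once its exponent drops to zero, forcing a re-indexing of the surviving blocks that produces the distinct leading factor in the $n_1=1$ case. Because the index offsets (and the appearance of $k_1=\lfloor(k-1)/2\rfloor$ and $k_2=\lfloor(k-2)/2\rfloor$) are easy to misplace, I would fix the conventions by checking small codes before committing to the final form---for instance $0011$ gives $\tau=8$ and $001101$ gives $\tau=75$, each of which can be confirmed directly from the graph and should agree with the assembled product.
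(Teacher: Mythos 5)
Your proposal is correct, and for the only part of the statement that requires an argument it takes a genuinely different route from the paper. The paper offers essentially no proof: it obtains $f_{j,v}=\tau r_{j,v}$ by combining the preceding proposition with the Shapiro identity (exactly as you do), and for $\tau$ it simply imports the block-form expression from Theorem~5.4 of Hammer--Kelmans. You instead derive $\tau$ internally via the Matrix--Tree theorem together with Lemma~\ref{eq_lambda}, giving $\tau=\frac1n\prod_{j=2}^n(d_j+c_j)$; this is valid (the identity $\theta_i+ic_{i+1}=d_{i+1}+c_{i+1}$ is already recorded in the paper, and a connected threshold graph has exactly one zero Laplacian eigenvalue), and your observation that $d_j+c_j$ is constant on blocks, equal to $v_\ell$ on zero-blocks and $v_\ell+1$ on one-blocks, is also correct. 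What your approach buys is a self-contained, citation-free proof that reuses only quantities established earlier in the paper, plus a much cleaner closed form for $\tau$. What it costs is exactly the bookkeeping you flag --- and here your instinct to test small codes is important: for $C=0011$ your product gives $\tau=\frac14\cdot 2\cdot 4\cdot 4=8$ (which is the true count for $K_4$ minus an edge), i.e.\ in block form $v_1^{n_1-1}(v_{2k}+1)^{n_{2k}-1}=2\cdot 4$, whereas the block formula as printed in the corollary evaluates there to $(v_1+1)^{n_1-1}(v_2+1)^{n_2-1}=3\cdot 4=12$. So you should not expect your (correct) product to reconcile with the displayed expression as written: the displayed $\tau$ appears to contain transcription errors (the running indices $\ell$ and $i$ are conflated, and the exponent decrement for the omitted initial vertex is attached to the wrong block). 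The block form that actually follows from your derivation is $\tau=v_1^{n_1-1}\prod_{\ell\ \mathrm{odd},\,\ell\ge 3}v_\ell^{n_\ell}\prod_{\ell\ \mathrm{even},\,\ell\le 2k-2}(v_\ell+1)^{n_\ell}\,(v_{2k}+1)^{n_{2k}-1}$, with the $n_1=1$ case arising only because the factor $v_1^{0}$ disappears and the first zero-block merges notationally with the following one-block.
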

% \begin{proof}
%     Since $f_{i,j} = \tau r_{i,j}$, we only need to find $\tau$. 
% \end{proof}

Employing the explicit expression of resistance distance would pose an intricacy in achieving the objective of this section. Instead, we shall adopt an alternative approach through enumerative combinatorics.

%%%%%%%%%%%%%%%%%%%%%%%%%%%%%%%%%%%%%%%%%%%%%%%%%%%%%%%%%%%%%%%%%%%%%%%%%%%%%
%%%%%%%%%%%%%%%%%%%%%%%%%%%%%%%%%%%%%%%%%%%%%%%%%%%%%%%%%%%%%%%%%%%%%%%%%%%%%
\subsection{Comparison of entries in the matrix $F$}
\label{Comparison of entries in the matrix $F$}

Let $G$ be a graph. We use $\cF(v;w)$ to denote the set of spanning $2$-forests of $G$ separating $v$ and $w$. Similarly, let $\cF(i,v;w)$ denote the set of spanning $2$-forests of $G$ such that one subtree contains $i$ and $v$, and the other contains $w$. We readily observe that for three vertices $x,y,z$, the set $\cF(x;y)$ can be expressed as a disjoint union as follows:
\begin{align}\label{eqn:disjoint}
	\cF(x;y) = \cF(z,x;y)\cup \cF(z,y;x).
\end{align}
For vertex $x\in V(G)$, let $\cF_x(i,v;w)$ be the subset of $\cF(i,v;w)$ such that for each forest, $x$ is adjacent to $v$, and $x$ lies on the path from $v$ to $i$.

\begin{lemma}\label{lem:bijection common nbh}
	Let $G$ be a graph. Then for distinct vertices $i,v,w$ of $G$, we have
	\begin{align*}
		\left|\bigcup_{z\in Z}\cF_z(i,w;v)\right| = \left|\bigcup_{z\in Z}\cF_z(i,v;w)\right|.
	\end{align*}
	where $Z$ is the set of common neighbours of $v$ and $w$.
\end{lemma}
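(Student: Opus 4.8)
The plan is to prove the identity by exhibiting, for each fixed common neighbour $z \in Z$, an explicit bijection between $\cF_z(i,v;w)$ and $\cF_z(i,w;v)$, and then summing over $z$. The first observation is that each of the two unions is in fact a disjoint union: if $F \in \cF_z(i,v;w)$, then $z$ must be the unique neighbour of $v$ along the tree path from $v$ to $i$, since in a tree no two vertices of a path can both be adjacent to its endpoint. Hence $F$ belongs to $\cF_z(i,v;w)$ for at most one $z$, and likewise on the other side. Consequently it suffices to show $|\cF_z(i,v;w)| = |\cF_z(i,w;v)|$ for each $z \in Z$.

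The key step is the edge-swap map $\phi_z \colon F \mapsto (F \setminus \{vz\}) \cup \{wz\}$. For $F \in \cF_z(i,v;w)$, write its two trees as $T_1 \ni i,v,z$ and $T_2 \ni w$; since $z \in Z$, both $vz$ and $wz$ are edges of $G$, with $vz \in E(F)$ and $wz \notin E(F)$. Removing $vz$ splits $T_1$ into the component $C_v$ containing $v$ and the component $C_z$ containing $z$; because $z$ lies on the $v$–$i$ path, $i \in C_z$. Adding the edge $wz$ then reconnects $C_z$ to $T_2$, producing a spanning $2$-forest whose trees are $C_v \ni v$ and $(C_z \cup T_2) \ni i,w,z$. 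In this new forest $z$ is adjacent to $w$, and since every $w$–$i$ path must traverse the unique bridging edge $wz$, it lies on the path from $w$ to $i$; thus $\phi_z(F) \in \cF_z(i,w;v)$.

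To finish I would check that the symmetric map $\psi_z \colon F' \mapsto (F' \setminus \{wz\}) \cup \{vz\}$ lands in $\cF_z(i,v;w)$, by the identical argument with the roles of $v$ and $w$ interchanged, and that $\psi_z \circ \phi_z = \mathrm{id}$ and $\phi_z \circ \psi_z = \mathrm{id}$, which is immediate since the two operations delete and restore complementary edges. This gives the bijection for each $z$, and summing over $z \in Z$ using the disjointness established above yields the claimed equality of cardinalities.

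The main obstacle — though more a matter of care than of genuine difficulty — is verifying that each swap really produces a spanning $2$-forest with the prescribed separation and path-incidence properties: one must confirm that deleting $vz$ yields exactly the three components $C_v, C_z, T_2$ with $i \in C_z$ (this is precisely where the hypothesis ``$z$ lies on the $v$–$i$ path'' is used), and that inserting $wz$ merges exactly $C_z$ and $T_2$ without creating a cycle. Once this local picture is pinned down, well-definedness of $\phi_z$, the inverse relation, and the disjointness of the unions are all routine.
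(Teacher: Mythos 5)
Your proposal is correct and follows essentially the same route as the paper: the same edge-swap map $F \mapsto (F\setminus\{vz\})\cup\{wz\}$ with its explicit inverse, combined with the observation that the unions over $z\in Z$ are disjoint. You actually spell out the well-definedness (that the swap produces a spanning $2$-forest in $\cF_z(i,w;v)$, using that $z$ lies on the $v$--$i$ path so $i$ stays with $z$ after deleting $vz$) in more detail than the paper does.
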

\begin{proof}
	Let $z\in Z$. Define a map $\phi$ from  $\bigcup_{z\in Z}\cF_z(i,v;w)$ to $\bigcup_{z\in Z}\cF_z(i,w;v)$ as follows: for each forest $f\in\bigcup_{z\in Z}\cF_z(i,v;w)$, $\phi(f)$ is obtained from $f$ by removing the edge $\{z,v\}$ and inserting the edge $\{z,w\}$. Similarly, we define a map $\psi$ from $\bigcup_{z\in Z}\cF_z(i,w;v)$ to $\bigcup_{z\in Z}\cF_z(i,v;w)$ as follows: for each forest $f\in\bigcup_{z\in Z}\cF_z(i,w;v)$, $\psi(f)$ is obtained from $f$ by removing the edge $\{z,w\}$ and inserting the edge $\{z,v\}$. Since $z$ is adjacent to $v$ and $w$ both in $G$, $\phi$ and $\psi$ are well-defined. (We may have $\cF_z(i,w;v)=\emptyset$, and we can readily see from $\phi$ and $\psi$ that $\cF_z(i,w;v)\neq \emptyset$ if and only if $\cF_z(i,v;w)\neq \emptyset$.) Clearly, $\phi\circ\psi$ and $\psi\circ\phi$ are the identity maps. Hence, $\phi$ and $\psi$ are bijective. Since $\cF_{z_1}(i,v;w)$ and $\cF_{z_2}(i,v;w)$ are disjoint whenever $z_1\neq z_2$, our desired result is obtained.
\end{proof}

\begin{lemma}\label{lem: f(iv)<f(iw)}
	Let $G$ be a graph, and let $i,v,w$ be distinct vertices. If $N(w)\subseteq N(v)\cup\{v\}$, then
	\begin{align*}
		|\cF(i;v)| \leq |\cF(i;w)|.
	\end{align*}
	where the equality holds if and only if every path from $v$ to $i$ in $G$ satisfies one of the following: (i) none of $N(v)\backslash N(w)$ lies on the path; and (ii) if any vertex of $N(v)\backslash N(w)$ is adjacent to $v$ in the path, then $w$ must lie on the path.
\end{lemma}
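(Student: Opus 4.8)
The plan is to construct an injection from $\cF(i;v)$ into $\cF(i;w)$ and analyze precisely when it fails to be a surjection. The key structural hypothesis $N(w)\subseteq N(v)\cup\{v\}$ means every neighbour of $w$ is either $v$ itself or a common neighbour of $v$ and $w$; this is exactly the kind of containment that makes the ``swap an edge at a common neighbour'' argument of Lemma~\ref{lem:bijection common nbh} applicable. First I would decompose $\cF(i;v)$ using the disjoint-union identity \eqref{eqn:disjoint} with $z=w$, writing $\cF(i;v) = \cF(w,i;v)\cup\cF(w,v;i)$, and similarly $\cF(i;w)=\cF(v,i;w)\cup\cF(v,w;i)$. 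The goal is to compare these pieces termwise.

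Next I would observe that the forests in $\cF(w,i;v)$ and $\cF(v,i;w)$ — those where the ``non-$i$'' vertex ($v$ or $w$ respectively) is isolated from the component containing $i$ together with the other of $v,w$ — should biject naturally, since $w$ and $v$ play symmetric roles once we know how they attach to the rest of the forest. The real content is in comparing $\cF(w,v;i)$ (forests where $v,w$ lie in the component separated from $i$) against the analogous set on the other side, and in handling forests where the path from $v$ to $i$ actually uses a vertex of $N(v)\setminus N(w)$. Here the idea is: given a spanning $2$-forest separating $v$ from $i$, I want to produce one separating $w$ from $i$ by rerouting the connection of $v$ to the rest of its tree through $w$. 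Because $N(w)\subseteq N(v)\cup\{v\}$, every edge incident to $w$ can be matched to an edge incident to $v$ via a common neighbour (as in Lemma~\ref{lem:bijection common nbh}), so the rerouting map is well-defined and injective; I would invoke Lemma~\ref{lem:bijection common nbh} directly on the relevant subfamilies indexed by the common-neighbour set $Z$.

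The main obstacle — and the source of the strict-versus-equality dichotomy — is the set of forests in $\cF(i;v)$ that have \emph{no} image under the natural injection, producing the inequality. A forest separating $i$ from $v$ fails to correspond to one separating $i$ from $w$ exactly when $w$ cannot be ``inserted'' in place of $v$, which happens when the path from $v$ toward $i$ leaves $v$ through a vertex in $N(v)\setminus N(w)$ (a neighbour of $v$ that is not a neighbour of $w$): such a path structure has no analogue on the $w$-side because $w$ lacks that neighbour. Thus the deficiency $|\cF(i;w)|-|\cF(i;v)|$ is governed precisely by forests/paths realizing this obstruction, and equality holds iff no such path exists — which I would show is equivalent to the stated conditions (i) and (ii). Verifying that conditions (i) and (ii) capture exactly the vanishing of this deficiency, rather than merely a sufficient condition, is the delicate part; I expect to argue it by exhibiting, whenever some path from $v$ to $i$ leaves $v$ through a vertex of $N(v)\setminus N(w)$ without passing through $w$, an explicit $2$-forest in $\cF(i;w)\setminus\phi(\cF(i;v))$, and conversely showing that under (i)–(ii) the injection is onto.
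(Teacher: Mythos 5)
Your overall architecture matches the paper's: split both sets via \eqref{eqn:disjoint}, transfer forests by an edge swap at a common neighbour (Lemma~\ref{lem:bijection common nbh}), and locate the deficiency in forests whose $v$--$i$ path leaves $v$ through a vertex of $N(v)\setminus N(w)$. However, the middle of your plan has the two pieces of the decomposition exactly backwards, and as written that step fails. After writing $\cF(i;v)=\cF(w,i;v)\cup\cF(w,v;i)$ and $\cF(i;w)=\cF(v,i;w)\cup\cF(v,w;i)$, the pair $\cF(w,v;i)$ and $\cF(v,w;i)$ are literally the \emph{same} set (one tree contains both $v$ and $w$, the other contains $i$), so there is nothing to compare there. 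Conversely, the pair $\cF(w,i;v)$ and $\cF(v,i;w)$ do \emph{not} ``biject naturally'': if they did, the two counts in the lemma would always be equal, contradicting the strict cases of the statement. All of the content lies in showing $|\cF(w,i;v)|\le|\cF(v,i;w)|$, which is where the paper applies Lemma~\ref{lem:bijection common nbh}: decompose $\cF(i,w;v)$ according to the first vertex $z$ on the tree-path from $w$ to $i$ (necessarily $z\in N(w)\setminus\{v\}$, hence a common neighbour by the hypothesis $N(w)\subseteq N(v)\cup\{v\}$), swap $\{z,w\}$ for $\{z,v\}$, and observe that the resulting image inside $\cF(i,v;w)=\bigcup_{y\in N(v)}\cF_y(i,v;w)$ misses exactly the subfamilies indexed by $y\in N(v)\setminus N(w)$. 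Your later sentences do describe this correct mechanism, so the repair is to delete the claimed bijection between $\cF(w,i;v)$ and $\cF(v,i;w)$ and to recognize the $\cF(w,v;i)$ piece as trivially common to both sides.

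A second, smaller gap is that the equality characterization is only a strategy. The paper's argument that failure of (i) and (ii) forces strict inequality requires actually exhibiting a forest of $\cF(i,v;w)$ outside the image of the swap, and it does so by a case split on whether $w$ is a cut vertex of $G$ (building a spanning tree of the relevant component that contains the offending $v$--$i$ path, so that the tree-path from $v$ to $i$ starts with an edge into $N(v)\setminus N(w)$). You should expect to need that construction, or an equivalent one, rather than a one-line observation; and the converse direction, that conditions (i)--(ii) force the swap map to be onto, also needs to be argued explicitly. Identifying this as ``the delicate part'' is accurate, but it remains to be done.
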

\begin{proof}
	We note that $\cF(i,w;v) = \bigcup_{z\in N(w)}\cF_z(i,w;v)$ and $\cF(i,v;w) = \bigcup_{y\in N(v)}\cF_y(i,v;w)$. From \eqref{eqn:disjoint}, it suffices to show that $|\cF(i,w;v)|\leq |\cF(i,v;w)|$. We can see that
	\begin{align*}
		|\cF(i,w;v)| =&~ \left|\bigcup_{z\in N(w)}\cF_z(i,w;v)\right| \\
		=&~ \left|\bigcup_{z\in N(w)}\cF_z(i,v;w)\right| \hspace{15mm}\text{(by the hypothesis)}\\
		\leq &~ |\cF(i,v;w)|  \hspace{35mm}\text{(by Lemma~\ref{lem:bijection common nbh})}.%\hspace{7mm}\text{(where the equality holds if and only if $N(v) = N(w)$.)}\qedhere
	\end{align*}
	
	Now we consider the condition for the equality. Suppose that there exists a path with a vertex $x\in N(v)\backslash N(w)$ so that it fails to satisfy both of (i) and (ii). Then $x$ is adjacent to $v$, and $w$ is not on the path. If $w$ is not a cut-vertex in $G$, then any spanning $2$-forest separating $w$ and the remaining belongs to $\cF(i,v;w)\backslash \cF(i,w;v)$. If $w$ is a cut-vertex, then there must be a component in the graph obtained from $G$ by deleting $w$ that contains the path from $v$ to $i$ (otherwise, $w$ would have lied on the path); so, any spanning $2$-forest separating the component and the remaining is in $\cF(i,v;w)\backslash \cF(i,w;v)$. Therefore, our desired result is established.
\end{proof}

\begin{theorem}\label{thm: entries f}
	Let $G$ be a threshold graph with code $C$. Then the following hold:
	\begin{itemize}
		\item[(i)] If $C = x_1\underbracket{0}_{\text{v}}\underbracket{0}_{\text{w}}x_2$ or $C = x_1\underbracket{1}_{\text{v}}\underbracket{1}_{\text{w}}x_2$, then $f_{i,v}=f_{i,w}$ for all $i\notin\{v,w\}$.
		\item[(ii)] If $C = x_1\underbracket{0}_{\text{w}}\underbracket{1}_{\text{v}}x_2$ or $C = x_1\underbracket{1}_{\text{v}}\underbracket{0}_{\text{w}}x_2$, then $f_{i,v}\leq f_{i,w}$ for all $i\notin\{v,w\}$, where the equality holds if and only if $C = \underbracket{0}_{\text{w}}\underbracket{1}_{\text{v}}x_2$.
		\item[(iii)] If $C = x_1\underbracket{0}_{\text{v}}111\ldots1\underbracket{0}_{\text{w}}x_2$, then $f_{i,v}<f_{i,w}$ for all $i\notin\{v,w\}$.
		\item[(iv)] If $C = x_1\underbracket{1}_{\text{w}}000\ldots0\underbracket{1}_{\text{v}}x_2$, then $f_{i,v}\leq f_{i,w}$ for all $i\notin\{v,w\}$, where the equality holds if and only if $C = x_1\underbracket{1}_{\text{w}}000\ldots0\underbracket{1}_{\text{v}}$ and $i$ precedes $w$.
	\end{itemize}
\end{theorem}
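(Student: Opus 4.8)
The plan is to deduce all four parts from Lemma~\ref{lem: f(iv)<f(iw)} (and Lemma~\ref{lem:bijection common nbh}, on which it rests), reading neighbourhoods straight off the code. The only tool I need throughout is the nested-neighbourhood description of a threshold graph: with the vertices $v_1,\dots,v_n$ written in code order,
\[N(v_j)=\begin{cases}\{v_k:k<j\}\cup\{v_k:k>j,\ c_k=1\},& \text{if }c_j=1,\\ \{v_k:k>j,\ c_k=1\},& \text{if }c_j=0,\end{cases}\]
which is immediate from the construction (a dominating vertex joins every earlier vertex, and every later dominating vertex joins back). In each of (ii)--(iv) the vertex named $v$ is the one with the larger neighbourhood, and a one-line check from this display gives $N(w)\subseteq N(v)\cup\{v\}$, so Lemma~\ref{lem: f(iv)<f(iw)} immediately yields $f_{i,v}\le f_{i,w}$. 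For (i) the two vertices are \emph{twins}: if both entries are $0$ then $N(v)=N(w)$, and if both are $1$ then $N(v)\setminus\{w\}=N(w)\setminus\{v\}$ with $v\sim w$; either way both containments $N(w)\subseteq N(v)\cup\{v\}$ and $N(v)\subseteq N(w)\cup\{w\}$ hold, so applying Lemma~\ref{lem: f(iv)<f(iw)} in both directions forces $f_{i,v}=f_{i,w}$, proving (i).

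It remains to settle equality in (ii)--(iv). I will use the equality clause of Lemma~\ref{lem: f(iv)<f(iw)} in the crisp form: $f_{i,v}<f_{i,w}$ holds exactly when there is a $v$--$i$ path whose first edge enters $N(v)\setminus N(w)$ and which avoids $w$. So everything reduces to computing $N(v)\setminus N(w)$ and deciding whether such a $w$-avoiding detour exists. Writing $v=v_p$, $w=v_q$ in the relevant pattern, the display gives: in (iii) (with $v$ before $w$ and a nonempty $1$-block between, else we are in case (i)) $N(v)\setminus N(w)$ is exactly that $1$-block; in (iv) (with $w$ before $v$ and a nonempty $0$-block between) $N(v)\setminus N(w)=\{w\}\cup(\text{the }0\text{-block})$; and in (ii) for $C=\underbracket{0}_w\underbracket{1}_v x_2$ one gets the minimal set $N(v)\setminus N(w)=\{w\}$, whereas a nonempty prefix $x_1$ (or the pattern $\underbracket{1}_v\underbracket{0}_w$) always leaves in $N(v)\setminus N(w)$ a vertex $v_a\neq w$ lying before $v$ (indeed the initial vertex $v_1$ works).

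For (ii) with $C=\underbracket{0}_w\underbracket{1}_v x_2$, since $N(v)\setminus N(w)=\{w\}$ any path whose first edge enters $N(v)\setminus N(w)$ starts $v\,w\cdots$ and so contains $w$; no $w$-avoiding detour exists, giving equality for every $i$. In the complementary subcases, strictness holds for every $i$: if $i$ itself lies in $N(v)\setminus N(w)$ the single edge $v\,i$ suffices, and otherwise $v\neq v_n$, so $G-\{v,w\}$ stays connected through the dominating last vertex $v_n\neq w$ (recall $c_n=1$ while $w$ sits at a $0$), and one detours $v\,v_1\cdots i$ avoiding $w$; hence equality never occurs and (ii) follows. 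For (iii) I take any vertex $v_a$ of the $1$-block, which is dominating; then $v\,v_a\,i$ (or $v\,v_a$ when $i=v_a$) is a $v$--$i$ path entering $N(v)\setminus N(w)$ and avoiding $w=v_q$, so $f_{i,v}<f_{i,w}$ strictly for every $i$.

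The delicate case is (iv), which is where I expect the real work to lie. Here $N(v)\setminus N(w)=\{w\}\cup(0\text{-block})$; a first edge into $w$ can never help (that path contains $w$), so strictness is governed entirely by the $0$-block vertices, whose behaviour flips with $x_2$. By the display a $0$-block vertex $v_j$ has $N(v_j)=\{v\}\cup\{v_k:k>q,\ c_k=1\}$. If $x_2\neq\emptyset$, connectivity ($c_n=1$) supplies a dominating vertex $v_n$ after $v$, so $v_j$ is not pendant and the detour $v\,v_j\,v_n\,i$ avoids $w$, forcing strict inequality for every $i$. If $x_2=\emptyset$ then $v=v_n$ and each $0$-block vertex is a leaf attached only to $v$, so it can appear on a $v$--$i$ path only as its endpoint; thus a $w$-avoiding detour into the $0$-block exists precisely when $i$ itself lies in the $0$-block. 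Consequently equality holds iff $x_2=\emptyset$ \emph{and} $i$ is not in the $0$-block, i.e.\ (the only remaining vertices other than $v,w$ being those preceding $w$) iff $i$ precedes $w$, exactly as claimed. The main obstacle is therefore the bookkeeping in (iv): tracking that the $0$-block vertices are pendant precisely when $x_2$ is empty, and converting ``first edge into $N(v)\setminus N(w)$ while avoiding $w$'' into the clean positional condition on $i$.
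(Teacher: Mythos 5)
Your route is the paper's route: part (i) via the twin structure, parts (ii)--(iv) by verifying $N(w)\subseteq N(v)\cup\{v\}$ from the nested-neighbourhood description and invoking Lemma~\ref{lem: f(iv)<f(iw)}, with the equality clause of that lemma settling when $f_{i,v}=f_{i,w}$. (The paper handles (i) by an automorphism exchanging the twins rather than by applying the lemma in both directions, and it leaves the equality analysis for (ii)--(iv) essentially to the reader, so your write-up is if anything more complete.) Your reformulation of the equality clause --- strictness iff some $v$--$i$ path has its first edge going into $N(v)\setminus N(w)$ and avoids $w$ --- is the correct contrapositive, your computations of $N(v)\setminus N(w)$ in each case are right, and the bookkeeping in (iv), including the pendant behaviour of the $0$-block when $x_2=\emptyset$, is handled correctly.

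There is one concrete slip, in (iii). You pick $v_a$ in the $1$-block and assert that $v\,v_a\,i$ is always a path because ``$v_a$ is dominating.'' But $v_a$ dominates only the vertices added before it; it is adjacent to a later vertex $v_k$ only when $c_k=1$. So if $i$ is a $0$ lying in $x_2$ --- for instance $C=0\,\underbracket{0}_{v}\,1\,\underbracket{0}_{w}\,0\,1$ with $v_a=v_3$ and $i=v_5$ --- the edge from $v_a$ to $i$ does not exist and your two-edge path fails. The conclusion survives: route instead through the final vertex $v_n$, which is dominating in all of $G$ (connectivity forces $c_n=1$) and is distinct from $w$ (which sits at a $0$), giving the $w$-avoiding path $v\,v_a\,v_n\,i$. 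This is exactly the device you already deploy in (ii) and (iv); it is needed in (iii) as well.
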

\begin{proof}
	Consider (i). Since $v$ and $w$ share the same neighbourhood, there exists an automorphism of $G$ that exchanges $v$ and $w$ and fixes the remaining vertices. Hence, the conclusion follows. 
 
    The remaining statements follows from the fact with Lemma~\ref{lem: f(iv)<f(iw)} that for each hypothesis, $N(w)$ is a subset of $N(v)$. Moreover, one can examine the equality conditions for (ii) and (iv) by Lemma~\ref{lem: f(iv)<f(iw)}. 
\end{proof}

\begin{corollary} \label{Corollary_F}
	Let $G$ be a threshold graph with code $C= \mathbf{0}^{s_1}\mathbf{1}^{t_1}\mathbf{0}^{s_2}\mathbf{1}^{t_2}\ldots \mathbf{0}^{s_k}\mathbf{1}^{t_k}$. Suppose that for $\ell=1,\dots,k,$ $v_\ell$ is a vertex corresponding to some zero in $\mathbf{0}^{s_\ell}$, and $w_\ell$ is a vertex corresponding to some one in $\mathbf{1}^{t_\ell}$. Then the following hold:
	\begin{itemize}
		\item Let $i$ be a vertex corresponding to a zero in $\mathbf{0}^{s_{\alpha}}$ for some $1\leq \alpha\leq k$. Then
		\begin{align*} 
			0 = f_{i,i}&<f_{i,w_k} \leq f_{i,w_{k-1}}<\cdots<f_{i,w_1}\leq f_{i,v_1}<\cdots<f_{i,v_{\alpha-1}} \\&<\underbracket{f_{i,v_{\alpha}}}_{\text{if it exists, \textit{i.e.}}\; s_{\alpha}\geq 2\;\text{and}\; i\neq v_\alpha}<f_{i,v_{\alpha+1}}<\cdots<f_{i,v_{k}}.
		\end{align*}
		\item Let $i$ be a vertex corresponding to a one in $\mathbf{1}^{t_{\alpha}}$ for some $1\leq \alpha\leq k$. Then
		\begin{align*} 
			0 = f_{i,i}&<f_{i,w_k} \leq f_{i,w_{k-1}}<\cdots<f_{i,w_{\alpha+1}}<\underbracket{f_{i,w_{\alpha}}}_{\text{if it exists, \textit{i.e.}}\;t_{\alpha}\geq 2\;\text{and}\; i\neq w_\alpha} <f_{i,w_{\alpha-1}}<\cdots< \\ &<f_{i,w_{1}}\leq f_{i,v_1}<f_{i,v_2}<\ldots<f_{i,v_k}.
		\end{align*}
	\end{itemize}
\end{corollary}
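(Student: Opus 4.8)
The plan is to obtain both chains by assembling, via transitivity, the pairwise comparisons already recorded in Theorem~\ref{thm: entries f}. The first step is to use part~(i) to pass from individual vertices to block representatives: vertices in a common block of $C$ have the same neighbourhood, so for fixed $i$ the value $f_{i,x}$ is constant as $x$ ranges over the vertices of a block other than $i$ itself. Thus it suffices to order one representative per block, and whenever a block has at least two vertices I may choose a representative distinct from $i$; if $i$'s own block is a singleton its representative is $i$ and contributes only $f_{i,i}=0$, which is why the bracketed terms in the statement are present exactly when $s_\alpha\geq 2$ or $t_\alpha\geq 2$. The smallest entry $0=f_{i,i}$ is immediate, and $0<f_{i,x}$ for every $x\neq i$ because $G$ is connected (the code ends in $\mathbf{1}^{t_k}$), so some spanning $2$-forest separates $i$ from $x$.

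Next I would produce the three families of consecutive inequalities that constitute each chain. Two consecutive zero-blocks, namely blocks $2\ell-1$ and $2\ell+1$ (representatives $v_\ell$ and $v_{\ell+1}$), are separated by exactly the one-block $2\ell$, so $C$ exhibits the pattern $0_{v_\ell}\mathbf{1}^{t_\ell}0_{v_{\ell+1}}$; part~(iii) then gives the strict inequality $f_{i,v_\ell}<f_{i,v_{\ell+1}}$, which yields the entire increasing run $f_{i,v_1}<\cdots<f_{i,v_k}$. Two consecutive one-blocks $2\ell$ and $2\ell+2$ (representatives $w_\ell$ and $w_{\ell+1}$) are separated by exactly the zero-block $2\ell+1$, giving the pattern $1_{w_\ell}\mathbf{0}^{s_{\ell+1}}1_{w_{\ell+1}}$, so part~(iv) gives $f_{i,w_{\ell+1}}\leq f_{i,w_\ell}$; its equality clause shows this is strict unless the later one-block is a single terminal vertex and $i$ precedes the earlier one, a configuration possible only at the very top of the chain, between $w_k$ and $w_{k-1}$. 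Finally, the first zero-block and first one-block are adjacent, so part~(ii) applied to the pattern $0_{v_1}1_{w_1}$ gives the single crossover $f_{i,w_1}\leq f_{i,v_1}$, with equality precisely when $s_1=1$. Chaining these through Theorem~\ref{thm: entries f} produces both displayed orderings.

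The step I expect to be the main obstacle is the careful placement of $i$'s own block together with the exact tracking of which ``$\leq$'' can be an equality. When $i$ lies in a zero-block (resp.\ one-block) of size at least two I must insert $f_{i,v_\alpha}$ (resp.\ $f_{i,w_\alpha}$) for a representative $\neq i$ and check that the two bordering comparisons remain legitimate instances of part~(iii) (resp.\ part~(iv)); this is fine because $i$ differs from both endpoints. The genuinely delicate points occur when $i$ sits at a block boundary or when a block adjacent to the crossover is a singleton: if $i$ is the unique vertex of the first zero-block, part~(ii) on blocks $1$ and $2$ is unavailable, since it would involve $i$, and the crossover must instead be supplied by part~(ii) on the adjacent one- and zero-blocks $2$ and $3$, which is strict. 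One must also re-examine the equality hypotheses ``$i$ precedes $w$'' and ``$v$ is the final vertex'' of parts~(ii) and~(iv) with this particular $i$, since these are exactly what decide strictness of the topmost and crossover inequalities. Carrying this out separately for $i$ in a zero-block and $i$ in a one-block, and translating the positional hypotheses into the stated conditions, is where essentially all of the case analysis lives.
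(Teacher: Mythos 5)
Your proposal is correct and follows essentially the same route as the paper, which states Corollary~\ref{Corollary_F} without a separate proof as a direct assembly of parts (i)--(iv) of Theorem~\ref{thm: entries f}: part (i) to collapse blocks to representatives, (iii) for the zero-block run, (iv) for the one-block run, and (ii) for the crossover, with the equality clauses of (ii) and (iv) accounting for the two ``$\leq$'' signs. Your additional care about where $i$'s own block sits and about singleton blocks at the boundaries is exactly the (unwritten) bookkeeping the paper's statement encodes in its bracketed conditions.
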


\begin{corollary} \label{Corollary_ordering_F_degrees}
	Let $G$ be a threshold graph, and let $i, w, v\in V(G)$ be distinct vertices. Then \begin{align*}
		f_{i,w} \geq f_{i.v}\quad \text{if and only if} \quad d_w \leq d_v.  
	\end{align*}
	This implies that the smallest non-zero entry $f_{p,q}$ is attained if and only if $p$ and $q$ are of the largest two degrees; and the largest non-zero entry $f_{r,s}$ is attained if and only if $r$ and $s$ are of the smallest two degrees.
\end{corollary}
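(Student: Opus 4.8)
The plan is to read the equivalence off the complete ordering of the entries $f_{i,\cdot}$ already obtained in Corollary~\ref{Corollary_F}, by matching it against the ordering of the vertex degrees across the blocks of the construction code; the monotonicity engine underneath is Lemma~\ref{lem: f(iv)<f(iw)}. Write $C=\mathbf{0}^{s_1}\mathbf{1}^{t_1}\cdots\mathbf{0}^{s_k}\mathbf{1}^{t_k}$ and, as in Corollary~\ref{Corollary_F}, let $v_\ell$ and $w_\ell$ denote representatives of the $\ell$th zero-block and one-block. First I would observe that both $f_{i,\cdot}$ and the degree are constant on each block: any two vertices in a common block are twins, so Theorem~\ref{thm: entries f}(i) forces equal $f$-values, while the degree formula \eqref{eq_degree} makes their degrees equal. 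This reduces the biconditional to a comparison among the $2k$ blocks, so it suffices to order the numbers $d_{v_1},\dots,d_{v_k},d_{w_1},\dots,d_{w_k}$ and to check that this order is the reverse of the $f_{i,\cdot}$-order in Corollary~\ref{Corollary_F}.

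Next I would compute the block degrees directly from \eqref{eq_degree}. For a zero-block one gets $d_{v_\ell}=t_\ell+t_{\ell+1}+\cdots+t_k$, which is strictly decreasing in $\ell$; for a one-block a short computation shows that $d_{w_\ell}$ is strictly increasing in $\ell$ and that $d_{w_1}-d_{v_1}=s_1-1\ge 0$. Together these yield the single chain $d_{v_k}<\cdots<d_{v_1}\le d_{w_1}<\cdots<d_{w_k}$, whose only possible tie is $d_{v_1}=d_{w_1}$ (precisely when $s_1=1$). This is exactly the reverse of the ascending $f$-order $f_{i,w_k}\le f_{i,w_{k-1}}<\cdots<f_{i,w_1}\le f_{i,v_1}<\cdots<f_{i,v_k}$ recorded in Corollary~\ref{Corollary_F} (the case $i$ in a one-block being analogous). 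Comparing the two chains term by term gives $f_{i,w}\ge f_{i,v}\iff d_w\le d_v$. As a cross-check, the unique tie in the degree chain, $d_{v_1}=d_{w_1}$ when $s_1=1$, lines up precisely with the equality case $f_{i,w_1}=f_{i,v_1}$ singled out in Theorem~\ref{thm: entries f}(ii).

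The step requiring the most care—and the main obstacle—is reconciling the strict versus non-strict relations at the top of the two chains, where Corollary~\ref{Corollary_F} records a ``$\le$'' between the two highest one-blocks while the degree chain there is strict. I would control this using the equality characterisation of Theorem~\ref{thm: entries f}(iv): one must pin down exactly when $f_{i,w_\ell}=f_{i,w_{\ell+1}}$ and verify its compatibility with the corresponding degree comparison, and this interplay between the combinatorial equality conditions and the degree arithmetic is the delicate point of the argument. Once the ordering equivalence is in hand, both extremal statements follow by two applications of it together with the symmetry $f_{p,q}=f_{q,p}$: minimising $f_{p,q}$ drives both indices to the large-degree end of the chain, so $p,q$ must carry the two largest degrees, and dually the maximum nonzero entry is forced onto the two smallest-degree vertices.
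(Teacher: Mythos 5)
Your proof follows the same route as the paper's: compute the block degrees from the construction code to obtain the chain $d_{v_k}<\cdots<d_{v_1}\le d_{w_1}<\cdots<d_{w_k}$ (with equality iff $s_1=1$) and read the biconditional off by matching this against the $f$-ordering of Corollary~\ref{Corollary_F}. The only difference is that you explicitly flag the non-strict comparison $f_{i,w_k}\le f_{i,w_{k-1}}$ at the top of the chain, which the paper passes over silently; your plan to resolve it via the equality characterisation in Theorem~\ref{thm: entries f}(iv) is the right instinct and is, if anything, more careful than the published argument.
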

\begin{proof}
	Let $C= \mathbf{0}^{s_1}\mathbf{1}^{t_1}\mathbf{0}^{s_2}\mathbf{1}^{t_2}\ldots \mathbf{0}^{s_k}\mathbf{1}^{t_k}$ be the code associated to $G$. For $\ell=1,\dots,k,$ let $v_\ell$ be a vertex corresponding some zero in $\mathbf{0}^{s_\ell}$, and $w_\ell$ be a vertex corresponding some one in $\mathbf{1}^{t_\ell}$. Then $d_{v_\ell} = \sum_{i=\ell}^k t_i$ and $d_{w_\ell} = -1 + \sum_{i=1}^k t_i + \sum_{i=1}^\ell s_i$. It follows that
	\begin{align}\label{eqn:ordering of degree}
		d_{v_k}< d_{v_{k-1}}<\cdots <d_{v_1}\leq d_{w_1}<d_{w_2}<\cdots<d_{w_k}
	\end{align}
	where the equality for $d_{v_1}$ and $d_{w_1}$ holds if and only if $s_1=1$. From Corollary~\ref{Corollary_F}, the conclusion follows.
\end{proof}

\begin{remark}\label{rmk:entries r}
	We note that $f_{v,w} = \tau r_{v,w}$ (where $\tau$ is the number of spanning trees). Hence, analogous results for effective resistance distance as in Theorem~\ref{thm: entries f} and Corollaries~\ref{Corollary_F} and \ref{Corollary_ordering_F_degrees} follow.
\end{remark}

%From the corollary above, we can see that the smallest non-zero entry $f_{v,w}$ in the matrix $F$ is attained if and only if either $v$ and $w$ are dominating vertices,

%An immediate corollary to this tells us when the extremal values of the $F$ matrix (or, equivalently, $R$ matrix), happen:
%\begin{corollary} \label{corollary_extremal_F_matrix}
%	Let $G$ be a threshold graph with the code $C = \mathbf{0}^{s_1}\mathbf{1}^{t_1}\mathbf{0}^{s_2}\mathbf{1}^{t_2}\ldots \mathbf{0}^{s_k}\mathbf{1}^{t_k}$ for $k\geq 2$. Let $v,w$ be distinct vertices of $G$. Then the following hold: 
%	\begin{itemize}
%		\item $f_{v,w}\leq f_{p,q}$ for all distinct $p$ and $q$ if and only if either $v$ and $w$ correspond to ones in $\mathbf{1}^{t_k}$ if $t_k = 1$, or one of them corresponds to $\mathbf{1}^{t_k}$ and 
%		\begin{itemize}
%			\item $t_k\geq 2$ and $v, w\in \mathbf{1}^{t_k}$ with $v\neq w$
%			\item $v\in 1^{t_k}$, $w\in 1^{t_{k-1}}$ (or vice-versa), if $t_k = 1$
%		\end{itemize}
%		\item Denoting the largest entry of the $F$ matrix as $F_{v,w}$ (meaning the largest entry of the $R$ matrix becomes $F_{v,w}$), we have:
%		\begin{itemize}
%			\item $v, w\in 0^{s_k}$ such that $v\neq w$, if $s_k\geq 2$
%			\item $v\in 0^{s_k}$, $w\in 0^{s_{k-1}}$ (or vice-versa), if $s_k = 1$
%		\end{itemize}
%	\end{itemize}
%\end{corollary}

%%%%%%%%%%%%%%%%%%%%%%%%%%%%%%%%%%%%%%%%%%%%%%%%%%%%%%%%%%%%%%%%%%%%%%%%%%%%%
%%%%%%%%%%%%%%%%%%%%%%%%%%%%%%%%%%%%%%%%%%%%%%%%%%%%%%%%%%%%%%%%%%%%%%%%%%%%%
\subsection{Linear orderings of the moments and accessibility indices via block structure}
\label{Linear orderings of the moments and accessibility indices via block structure}

Let $G$ be a threshold graph with code $C = \mathbf{0}^{s_1}\mathbf{1}^{t_1}\mathbf{0}^{s_2}\mathbf{1}^{t_2}\ldots \mathbf{0}^{s_k}\mathbf{1}^{t_k}$. It follows from (i) of Theorem~\ref{thm: entries f} that moments of vertices in the same block in $C$ are the same and so are their accessibility indices. Hence, we may define the moment of vertex corresponding to $0$ in $\mathbf{0}^{s_i}$ (resp. $1$ in $\mathbf{1}^{t_i}$) as $\mu(\mathbf{0}^{s_i})$ (resp. $\mu(\mathbf{1}^{t_i})$). Similarly, let $\alpha(\mathbf{0}^{s_i})$ (resp. $\alpha(\mathbf{1}^{t_i})$) denote the accessibility index of vertex corresponding to $0$ in $\mathbf{0}^{s_i}$ (resp. $1$ in $\mathbf{1}^{t_i}$).

Here is the main result of this section.
\begin{theorem}\label{thm:ordering accessibility}
	Let $G$ be a threshold graph with code $C = \mathbf{0}^{s_1}\mathbf{1}^{t_1}\mathbf{0}^{s_2}\mathbf{1}^{t_2}\ldots \mathbf{0}^{s_k}\mathbf{1}^{t_k}$. Then
	\begin{align*}
		\mu(\mathbf{0}^{s_k}) > \mu(\mathbf{0}^{s_{k-1}}) > \ldots > \mu(\mathbf{0}^{s_1}) \ge \mu(\mathbf{1}^{t_1}) > \mu(\mathbf{1}^{t_2}) \ldots > \mu(\mathbf{1}^{t_k})
	\end{align*}
	with equality if and only if $s_1 = 1$. This implies that
	\begin{align*}
		\alpha(\mathbf{0}^{s_k}) > \alpha(\mathbf{0}^{s_{k-1}}) > \ldots > \alpha(\mathbf{0}^{s_1})\ge \alpha(\mathbf{1}^{t_1}) > \alpha(\mathbf{1}^{t_2}) \ldots > \alpha(\mathbf{1}^{t_k})
	\end{align*}
	with equality if and only if $s_1 = 1$.
\end{theorem}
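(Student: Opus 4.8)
The plan is to reduce the entire statement to a single monotonicity principle: in a connected threshold graph, the moment of a vertex is a strictly decreasing function of its degree, and two vertices share a moment exactly when they share a degree. Once this is established, both displayed chains follow by merely reading off the degree ordering recorded in the proof of Corollary~\ref{Corollary_ordering_F_degrees}, namely $d_{v_k}<\cdots<d_{v_1}\le d_{w_1}<\cdots<d_{w_k}$, whose lone non-strict link $d_{v_1}\le d_{w_1}$ is an equality precisely when $s_1=1$. Since $\mu(v)=\alpha(v)+\mathcal{K}(G)$ differs from $\alpha(v)$ only by the additive constant $\mathcal{K}(G)$, the accessibility indices are ordered identically to the moments, so it suffices to prove the first chain.

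To prove the monotonicity principle I would compare two vertices $v$ and $w$ through the difference of their moments. Writing $\mu(v)=\sum_{j\neq v} d_j r_{j,v}$, isolating the $j=w$ term (and likewise the $j=v$ term in $\mu(w)$), and using the symmetry $r_{v,w}=r_{w,v}$, the difference collapses to the clean expression
\[
\mu(v)-\mu(w) = (d_w-d_v)\,r_{v,w} + \sum_{j\neq v,w} d_j\,(r_{j,v}-r_{j,w}).
\]
Every quantity on the right is already controlled: by Corollary~\ref{Corollary_ordering_F_degrees} together with Remark~\ref{rmk:entries r} (which transports the comparison from $f$ to $r=f/\tau$), one has $r_{j,v}\ge r_{j,w}$ for every $j$ exactly when $d_v\le d_w$, and each such inequality is an equality if and only if $d_v=d_w$.

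Feeding this into the difference formula, suppose $d_v\le d_w$. Then the summation is a nonnegative combination of nonnegative terms and the leading term $(d_w-d_v)r_{v,w}$ is nonnegative as well, so $\mu(v)\ge\mu(w)$. For strictness I would invoke connectedness, which forces $r_{v,w}>0$ for $v\neq w$: if $d_v<d_w$ the leading term is already strictly positive, giving $\mu(v)>\mu(w)$; if instead $d_v=d_w$, then the leading term vanishes and every entry-wise difference $r_{j,v}-r_{j,w}$ vanishes too, yielding $\mu(v)=\mu(w)$. This is exactly the monotonicity principle.

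It then remains only to apply this to the degree chain of Corollary~\ref{Corollary_ordering_F_degrees}, reversing it (smaller degree $\leftrightarrow$ larger moment) to produce the asserted ordering, with the single non-strict link $\mu(\mathbf{0}^{s_1})\ge\mu(\mathbf{1}^{t_1})$ inherited from $d_{v_1}\le d_{w_1}$ and hence an equality exactly when $s_1=1$; the accessibility chain follows from the constant shift. Since the heavy combinatorial work---the entry-wise comparison of the forest counts $f_{i,v}$---is already in hand, I expect the only real care to be needed in the equality analysis: confirming that equal degrees force \emph{all} the relevant resistances to coincide so that the moments are \emph{exactly} equal rather than merely weakly ordered, and conversely that a strict degree gap is converted into a strict moment gap via positivity of $r_{v,w}$.
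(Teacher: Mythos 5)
Your proof is correct, but it is organized quite differently from the paper's. The paper fixes representatives $\bar v_\ell,\bar w_\ell$ of each block and separately computes the three kinds of consecutive differences $\mu(\bar v_{j+1})-\mu(\bar v_j)$, $\mu(\bar w_j)-\mu(\bar w_{j+1})$, and $\mu(\bar v_1)-\mu(\bar w_1)$, each time expanding the moment as a sum over blocks, invoking Theorem~\ref{thm: entries f} termwise, and handling the $s_1=1$ equality by an automorphism. You instead prove a single uniform statement --- $\mu$ is strictly decreasing in degree, with equality exactly for equal degrees --- via the identity $\mu(v)-\mu(w)=(d_w-d_v)r_{v,w}+\sum_{j\neq v,w}d_j(r_{j,v}-r_{j,w})$, and then read the whole chain off the degree ordering \eqref{eqn:ordering of degree}. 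Both arguments rest on the same combinatorial input (the entrywise comparison of the $f_{i,v}$ transported to resistances via Remark~\ref{rmk:entries r}), so this is not a shortcut around the hard work, but your packaging is cleaner: one lemma covers all the comparisons, including the boundary case $\mu(\mathbf{0}^{s_1})$ versus $\mu(\mathbf{1}^{t_1})$, and the strict inequalities come for free from $r_{v,w}>0$ in a connected graph rather than from a case analysis. One small caution: your parenthetical claim that each entrywise inequality $r_{j,v}\ge r_{j,w}$ is an equality \emph{only} when $d_v=d_w$ is stronger than what Corollary~\ref{Corollary_F} actually delivers (its chains contain genuine equality cases such as $f_{i,w_k}=f_{i,w_{k-1}}$ under the conditions of Theorem~\ref{thm: entries f}(iv), even though $d_{w_k}\neq d_{w_{k-1}}$); fortunately your argument never uses that strictness --- the sum only needs to be nonnegative, since the term $(d_w-d_v)r_{v,w}$ already supplies the strict gap --- so the proof stands, but you should weaken that sentence. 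The reduction from $\mu$ to $\alpha$ by the constant shift $\mathcal{K}(G)$ matches the paper exactly.
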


From \eqref{eqn:ordering of degree}, we immediately obtain the following characterization.
\begin{corollary}
	Let $G$ be a threshold graph and let $v, w\in V(G)$ be two distinct vertices. Then
	\[\alpha(v)> \alpha(w) \iff \mu(v) > \mu(w) \iff d_v < d_w.\]
\end{corollary}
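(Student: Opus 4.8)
The plan is to derive all three equivalences from two ingredients already established: the identity $\mu(u) = \alpha(u) + \mathcal{K}(G)$ relating moments to accessibility indices, and the observation that the moment chain of Theorem~\ref{thm:ordering accessibility} and the degree chain \eqref{eqn:ordering of degree} list the blocks of $C$ in \emph{exactly the same order}, one strictly decreasingly and the other strictly increasingly, with matching tie structures.

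First I would dispatch the equivalence $\alpha(v) > \alpha(w) \iff \mu(v) > \mu(w)$. Since $\mathcal{K}(G)$ is a fixed scalar depending only on the graph, the identity $\mu(u) = \alpha(u) + \mathcal{K}(G)$ gives $\mu(v) - \mu(w) = \alpha(v) - \alpha(w)$ for every pair of vertices; subtracting a common constant preserves the sign of the difference, so any strict inequality in one quantity transfers verbatim to the other. This settles the left biconditional with no case analysis.

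Next I would establish $\mu(v) > \mu(w) \iff d_v < d_w$ by matching the two chains block-by-block. With $C = \mathbf{0}^{s_1}\mathbf{1}^{t_1}\cdots\mathbf{0}^{s_k}\mathbf{1}^{t_k}$ and $v_\ell, w_\ell$ representatives of the blocks $\mathbf{0}^{s_\ell}, \mathbf{1}^{t_\ell}$, Theorem~\ref{thm:ordering accessibility} orders the blocks as $\mathbf{0}^{s_k}, \ldots, \mathbf{0}^{s_1}, \mathbf{1}^{t_1}, \ldots, \mathbf{1}^{t_k}$ with strictly decreasing moment, the only tie $\mu(\mathbf{0}^{s_1}) = \mu(\mathbf{1}^{t_1})$ occurring precisely when $s_1 = 1$; meanwhile \eqref{eqn:ordering of degree} orders the very same sequence $v_k, \ldots, v_1, w_1, \ldots, w_k$ with strictly increasing degree, the only tie $d_{v_1} = d_{w_1}$ occurring precisely when $s_1 = 1$. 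The two orderings are therefore mutually reversed with identical tie structure. For distinct $v, w$ in different (untied) blocks, one block strictly precedes the other in this common sequence, forcing $\mu(v) > \mu(w)$ and $d_v < d_w$ to hold together or fail together; for distinct $v, w$ in the same block (or in the two tied blocks when $s_1 = 1$) we have $\mu(v) = \mu(w)$ and $d_v = d_w$ simultaneously, so both strict inequalities are vacuously false and the biconditional still holds. Composing with the first equivalence yields the full chain.

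I expect no genuine obstacle, since the result is a direct corollary of the two chains; the one point demanding care is confirming that their equality cases coincide, i.e.\ that the sole admissible tie $\mathbf{0}^{s_1}$ versus $\mathbf{1}^{t_1}$ is governed by the same condition $s_1 = 1$ in both orderings. As both Theorem~\ref{thm:ordering accessibility} and \eqref{eqn:ordering of degree} flag exactly this condition, the tie structures align and the biconditionals remain strict-compatible on every pair of distinct vertices, including same-block pairs where all three strict statements fail together.
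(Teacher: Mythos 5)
Your proof is correct and follows essentially the same route as the paper, which derives the corollary immediately by comparing the moment chain of Theorem~\ref{thm:ordering accessibility} with the degree chain \eqref{eqn:ordering of degree} (using $\mu(u) = \alpha(u) + \mathcal{K}(G)$ for the first equivalence). Your explicit verification that the two chains reverse each other with matching tie structure (both ties governed by $s_1 = 1$), and that same-block pairs make all three strict inequalities fail simultaneously, simply spells out what the paper leaves as immediate.
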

%Under this notation, by identifying $R = \frac{F}{\tau}$, where $\tau$ is the number of spanning trees of $G$, Corollary \ref{Corollary_F} becomes the following:
%
%\begin{align*}
%	\forall v\in V(G), 0 &< R_{v, 0^{t_k}} \leq R_{v, 0^{t_{k-1}}} < R_{v, 0^{t_{k-2}}} < \ldots < R_{v, 0^{t_1}} \leq R_{v, 0^{s_1}} < R_{v, 0^{s_2}} < \\
%	&< \ldots < R_{v, 0^{t_{n - 1}}} < R_{v, 0^{t_n}}
%\end{align*}

%Let us define $S = \sum_{i=1}^k s_i$ and $T = \sum_{i=1}^k t_i$. For $\ell=1,\dots,k$, define $S_\ell = \sum_{i=1}^\ell s_i$ and $T_\ell = \sum_{i=1}^\ell t_i$. Suppose that for $\ell=1,\dots,k,$ $v_\ell$ is a vertex corresponding some zero in $\mathbf{0}^{s_\ell}$, and $w_\ell$ is a vertex corresponding some one in $\mathbf{1}^{t_\ell}$. Then $$d_{v_\ell} = \sum_{i=\ell}^k t_i = T-T_{\ell-1},\quad \text{and}\quad d_{w_\ell} = -1 + \sum_{i=1}^k t_i + \sum_{i=1}^\ell s_i=T+S_\ell-1.$$
\noindent
\textbf{Proof of Theorem~\ref{thm:ordering accessibility}:}
We begin with introducing notation. For $\ell=1,\dots,k,$ let $v_\ell$ be the vertex corresponding to the first zero in $\mathbf{0}^{s_\ell}$, and let $w_\ell$ be the vertex corresponding to the first one in $\mathbf{1}^{t_\ell}$. Define $\bar{v}_\ell$ to be the vertex of $G$ as follows: if $s_\ell\geq 2$ then $\bar{v}_\ell$ corresponds to the second zero in $\mathbf{0}^{s_\ell}$; and if $s_\ell= 1$ then $\bar{v}_\ell = v_\ell$. Similarly, we define $\bar{w}_\ell$ to be the vertex of $G$ as follows: if $t_\ell\geq 2$ then $\bar{w}_\ell$ corresponds to the second one in $\mathbf{1}^{t_\ell}$; and if $t_\ell= 1$ then $\bar{w}_\ell = w_\ell$.

Suppose that $s_1=1$. Since $\bar{v}_1$ and any vertex $x$ corresponding to one in $\mathbf{1}^{t_1}$ have the same neighbourhood, there exists an automorphism of $G$ that maps $\bar{v}_1$ to $x$. Hence, $\mu(\mathbf{0}^{s_1}) = \mu(\mathbf{1}^{t_1})$. 

We now consider moments of $\bar{v}_\ell$ and $\bar{w}_\ell$.  We observe that  $r_{x_1,\bar{v}_\ell} = r_{x_2,\bar{v}_\ell}$ if $x_1$ and $x_2$ are in the same cell with $x_1\ne \bar{v}_\ell$ and $x_2\ne \bar{v}_\ell$, and that if $s_\ell=1$ then  $r_{v_\ell,\bar{v}_\ell} = 0$, and if $t_\ell=1$ then $r_{w_\ell,\bar{w}_\ell} = 0$.  Then we can see
\begin{align*}
	\mu(\bar{v}_\ell) =&~\sum_{i=1}^n d_i r_{i,\bar{v}_\ell}= \sum\limits_{i = 1}^k s_id_{v_i}r_{v_i,\bar{v}_\ell}+\sum\limits_{i = 1}^k t_id_{w_i}r_{w_i,\bar{v}_\ell} - d_{v_\ell} r_{v_\ell,\bar{v}_\ell},\\
	\mu(\bar{w}_\ell) =&~\sum_{i=1}^n d_i r_{i,\bar{w}_\ell}= \sum\limits_{i = 1}^k s_id_{v_i}r_{v_i,\bar{w}_\ell}+\sum\limits_{i = 1}^k t_id_{w_i}r_{w_i,\bar{w}_\ell} - d_{w_\ell} r_{w_\ell,\bar{w}_\ell}.
\end{align*}
Let $j = 1,\dots, k-1$. We can find 
\begin{align*}\nonumber
	&\mu(\bar{v}_{j+1})-\mu(\bar{v}_{j}) \\\nonumber
	= &~\sum\limits_{i = 1}^k s_id_{v_i}(r_{v_i,\bar{v}_{j+1}}-r_{v_i,\bar{v}_j})+\sum\limits_{i = 1}^k t_id_{w_i}(r_{w_i,\bar{v}_{j+1}}-r_{w_i,\bar{v}_j}) + (d_{v_j} r_{v_j,\bar{v}_j} - d_{v_{j+1}} r_{v_{j+1},\bar{v}_{j+1}})\\
	& \hspace{80mm}\text{(by Theorem~\ref{thm: entries f} with Remark~\ref{rmk:entries r})}\\
	>&~ - d_{v_{j+1}} r_{v_{j+1},\bar{v}_{j+1}}+\sum\limits_{i = 1}^k s_id_{v_i}(r_{v_i,\bar{v}_{j+1}}-r_{v_i,\bar{v}_j}) \\
	>&~ - d_{v_{j+1}} r_{v_{j+1},\bar{v}_{j+1}} + d_{v_j}(r_{v_j,\bar{v}_{j+1}}-r_{v_j,\bar{v}_j}) +d_{v_{j+1}}(r_{v_{j+1},\bar{v}_{j+1}}-r_{v_{j+1},\bar{v}_j})\\
	=&~r_{v_j,\bar{v}_{j+1}} (d_{v_j}-d_{v_{j+1}})\hspace{35mm}\text{(because $r_{v_j,\bar{v}_{j+1}}\geq 0$ and $d_{v_j}-d_{v_{j+1}}>0$)}\\
	\geq&~0.
\end{align*}
Similarly, one can verify that $\mu(\bar{w}_{j})-\mu(\bar{w}_{j+1})>0$. 

Finally, we consider $\mu(\bar{v}_1)-\mu(\bar{w}_1)$ when $s_1 = 1$ or $s_1\geq 2$. If $s_1 \geq 2$, then by (ii) of Theorem~\ref{thm: entries f}, we have
\begin{align*}
	&\mu(\bar{v}_1)-\mu(\bar{w}_1) \\
	=&~\sum\limits_{i = 1}^k s_id_{v_i}(r_{v_i,\bar{v}_1}-r_{v_i,\bar{w}_1})+\sum\limits_{i = 1}^k t_id_{w_i}(r_{w_i,\bar{v}_1}-r_{w_i,\bar{w}_1}) +(d_{w_1} r_{w_1,\bar{w}_1}- d_{v_1} r_{v_1,\bar{v}_1})\\
	>&~ d_{v_1}(r_{v_1,\bar{v}_1}-r_{v_1,\bar{w}_1})+d_{w_1}(r_{w_1,\bar{v}_1}-r_{w_1,\bar{w}_1})+ (d_{w_1} r_{w_1,\bar{w}_1}- d_{v_1} r_{v_1,\bar{v}_1})\\
	=&~r_{w_1,\bar{v}_1}(d_{w_1}-d_{v_1})\\
	\geq &~ 0.
\end{align*}
Using (ii) of Theorem~\ref{thm: entries f}, one can show that if $s_1 = 1$ then $\mu(\bar{v}_1)-\mu(\bar{w}_1) = 0$. 

Therefore, the theorem follows. \hfill $\Box$

% \subsection{Flip transformation of the construction sequence}

% \section{Conclusion}
% \Jane{What goes here?}

\section*{Acknowledgement}
The authors are grateful to Ada Chan at York University for constructive conversations during this project, and to the Fields Institute for Research in the Mathematical Sciences for hosting the 2023 Fields Undergraduate Summer Research Program (FUSRP).

\section*{Funding}
J. Breen is supported by the Natural Sciences and Engineering Research Council of Canada (NSERC) Grant RGPIN-2021-03775. S. Kim is supported in part by funding from the Fields Institute for Research in Mathematical Sciences and from NSERC. A. Low Fung, A. Mann, A.~A. Parfeni, and G.~Tedesco were supported by funding from the Fields Institute and by funding from NSERC Grants RGPIN-2021-03775 and RGPIN-2021-03609.

%\bibliographystyle{plain}
%\bibliography{fusrp.bib}

\begin{thebibliography}{10}
	
	\bibitem{altafini2023edge}
	Diego Altafini, Dario~A Bini, Valerio Cutini, Beatrice Meini, and Federico
	Poloni.
	\newblock An edge centrality measure based on the {K}emeny constant.
	\newblock {\em SIAM Journal on Matrix Analysis and Applications},
	44(2):648--669, 2023.
	
	\bibitem{banerjee2017normalized}
	Anirban Banerjee and Ranjit Mehatari.
	\newblock On the normalized spectrum of threshold graphs.
	\newblock {\em Linear Algebra and its Applications}, 530:288--304, 2017.
	
	\bibitem{bapat2013adjacency}
	R.~B. Bapat.
	\newblock On the adjacency matrix of a threshold graph.
	\newblock {\em Linear Algebra and its Applications}, 439(10):3008--3015, 2013.
	
	\bibitem{bapat2010graphs}
	Ravindra~B. Bapat.
	\newblock {\em Graphs and Matrices}, volume~27.
	\newblock Springer, 2010.
	
	\bibitem{breen2019computing}
	Jane Breen, Steve Butler, Nicklas Day, Colt DeArmond, Kate Lorenzen, Haoyang
	Qian, and Jacob Riesen.
	\newblock Computing {K}emeny's constant for barbell-type graphs.
	\newblock {\em The Electronic Journal of Linear Algebra}, 35:583--598, 2019.
	
	\bibitem{breen2022kemeny}
	Jane Breen, Emanuele Crisostomi, and Sooyeong Kim.
	\newblock {K}emeny’s constant for a graph with bridges.
	\newblock {\em Discrete Applied Mathematics}, 322:20--35, 2022.
	
	\bibitem{fanchung}
	Fan R.~K. Chung.
	\newblock {\em Spectral Graph Theory}, volume~92.
	\newblock American Mathematical Soc., 1997.
	
	\bibitem{ciardo2022kemeny}
	Lorenzo Ciardo, Geir Dahl, and Steve Kirkland.
	\newblock On {K}emeny's constant for trees with fixed order and diameter.
	\newblock {\em Linear and Multilinear Algebra}, 70(12):2331--2353, 2022.
	
	\bibitem{faught20221}
	Nolan Faught, Mark Kempton, and Adam Knudson.
	\newblock A 1-separation formula for the graph {K}emeny constant and {B}raess
	edges.
	\newblock {\em Journal of Mathematical Chemistry}, 60(1):49--69, 2022.
	
	\bibitem{hammer1996laplacian}
	Peter~L. Hammer and Alexander~K. Kelmans.
	\newblock Laplacian spectra and spanning trees of threshold graphs.
	\newblock {\em Discrete Applied Mathematics}, 65(1-3):255--273, 1996.
	
	\bibitem{hornjohnson}
	Roger~A. Horn and Charles~R. Johnson.
	\newblock {\em Matrix Analysis}.
	\newblock Cambridge University Press, 2nd edition, 2012.
	
	\bibitem{jacobs2015eigenvalues}
	David~P. Jacobs, Vilmar Trevisan, and Fernando Tura.
	\newblock Eigenvalues and energy in threshold graphs.
	\newblock {\em Linear Algebra and its Applications}, 465:412--425, 2015.
	
	\bibitem{jang2023kemeny}
	Jihyeug Jang, Sooyeong Kim, and Minho Song.
	\newblock {K}emeny's constant and {W}iener index on trees.
	\newblock {\em Linear Algebra and its Applications}, 2023.
	
	\bibitem{kemenysnell}
	John~G. {K}emeny and J.~Laurie Snell.
	\newblock {\em Finite {M}arkov {C}hains}.
	\newblock The University Series in Undergraduate Mathematics. D. Van Nostrand
	Co., Inc., Princeton, N.J.-Toronto-London-New York, 1960.
	
	\bibitem{kim2023bounds}
	Sooyeong Kim, Neal Madras, Ada Chan, Mark Kempton, Stephen Kirkland, and Adam
	Knudson.
	\newblock Bounds on {K}emeny's constant of a graph and the {N}ordhaus-{G}addum
	problem.
	\newblock {\em arXiv preprint arXiv:2309.05171}, 2023.
	
	\bibitem{kirkland2016random}
	Steve Kirkland.
	\newblock Random walk centrality and a partition of {K}emeny’s constant.
	\newblock {\em Czechoslovak Mathematical Journal}, 66:757--775, 2016.
	
	\bibitem{kirkland2016kemeny}
	Steve Kirkland and Ze~Zeng.
	\newblock {K}emeny's constant and an analogue of {B}raess' paradox for trees.
	\newblock {\em Electronic Journal of Linear Algebra}, 31:444--464, 2016.
	
	\bibitem{levene2002kemeny}
	Mark Levene and George Loizou.
	\newblock {K}emeny's constant and the random surfer.
	\newblock {\em American Mathematical Monthly}, 109(8):741--745, 2002.
	
	\bibitem{thresholdbook}
	N.~V.~R. Mahadev and U.~N. Peled.
	\newblock {\em Threshold Graphs and Related Topics}.
	\newblock North Holland, 1995.
	
	\bibitem{noh2004random}
	Jae~Dong Noh and Heiko Rieger.
	\newblock Random walks on complex networks.
	\newblock {\em Physical review letters}, 92(11):118701, 2004.
	
	\bibitem{palacios2011broder}
	Jos{\'e}~Luis Palacios and Jos{\'e}~M. Renom.
	\newblock Broder and {K}arlin's formula for hitting times and the {K}irchhoff
	index.
	\newblock {\em International Journal of Quantum Chemistry}, 111(1):35--39,
	2011.
	
	\bibitem{shapiro1987electrical}
	Louis~W. Shapiro.
	\newblock An electrical lemma.
	\newblock {\em Mathematics Magazine}, 60(1):36--38, 1987.
	
	\bibitem{taittobin}
	Michael Tait and Josh Tobin.
	\newblock Three conjectures in extremal spectral graph theory.
	\newblock {\em Journal of Combinatorial Theory, Series B}, 126:137--161, 2017.
	
	\bibitem{wang2017kemeny}
	Xiangrong Wang, Johan L.~A. Dubbeldam, and Piet Van~Mieghem.
	\newblock {K}emeny's constant and the effective graph resistance.
	\newblock {\em Linear Algebra and its Applications}, 535:231--244, 2017.
	
\end{thebibliography}

\end{document}